\documentclass[a4paper,10pt]{article}

\usepackage[utf8]{inputenc}
\usepackage[english]{babel}
\usepackage{amsmath,amssymb,amsthm,,amsbsy,bbm} 
\usepackage{hyperref,xcolor,fullpage}

\makeatletter \let\@fnsymbol\@arabic \makeatother 
\allowdisplaybreaks 
\usepackage[numbers]{natbib} \setlength{\bibsep}{4pt} 

\newtheorem{theorem}{Theorem}[section]
\newtheorem{lemma}[theorem]{Lemma}
\newtheorem{definition}[theorem]{Definition}

\newtheorem{corollary}[theorem]{Corollary}
\newtheorem{question}[theorem]{Question}
\newtheorem{conjecture}[theorem]{Conjecture}

\newtheorem{proposition}[theorem]{Proposition}
\newtheorem{remark}[theorem]{Remark}

\makeatletter
\let\save@mathaccent\mathaccent
\newcommand*\if@single[3]{
	\setbox0\hbox{${\mathaccent"0362{#1}}^H$}%
	\setbox2\hbox{${\mathaccent"0362{\kern0pt#1}}^H$}%
	\ifdim\ht0=\ht2 #3\else #2\fi }
\newcommand*\rel@kern[1]{\kern#1\dimexpr\macc@kerna}
\newcommand*\widebar[1]{\@ifnextchar^{{\wide@bar{#1}{0}}}{\wide@bar{#1}{1}}}
\newcommand*\wide@bar[2]{\if@single{#1}{\wide@bar@{#1}{#2}{1}}{\wide@bar@{#1}{#2}{2}}}
\newcommand*\wide@bar@[3]{
	\begingroup
	\def\mathaccent##1##2{
		\let\mathaccent\save@mathaccent
		\if#32 \let\macc@nucleus\first@char \fi
		\setbox\z@\hbox{$\macc@style{\macc@nucleus}_{}$}
		\setbox\tw@\hbox{$\macc@style{\macc@nucleus}{}_{}$}
		\dimen@\wd\tw@ \advance\dimen@-\wd\z@ \divide\dimen@ 3 \@tempdima\wd\tw@ \advance\@tempdima-\scriptspace \divide\@tempdima 10 \advance\dimen@-\@tempdima \ifdim\dimen@>\z@ \dimen@0pt \fi \rel@kern{0.6}\kern-\dimen@
		\if#31 \overline{\rel@kern{-0.6}\kern\dimen@\macc@nucleus\rel@kern{0.4}\kern\dimen@} \advance\dimen@0.4\dimexpr\macc@kerna \let\final@kern#2 \ifdim\dimen@<\z@ \let\final@kern1 \fi
		\if \final@kern1 \kern-\dimen@ \fi
		\else \overline{\rel@kern{-0.6}\kern\dimen@#1} \fi }
	\macc@depth\@ne	\let\math@bgroup\@empty \let\math@egroup\macc@set@skewchar 	\mathsurround\z@ \frozen@everymath{\mathgroup\macc@group\relax} 	 \macc@set@skewchar\relax \let\mathaccentV\macc@nested@a	\if#31 \macc@nested@a\relax111{#1} \else \def\gobble@till@marker##1\endmarker{} \futurelet\first@char\gobble@till@marker#1\endmarker \ifcat\noexpand\first@char A\else \def\first@char{} \fi \macc@nested@a\relax111{\first@char} \fi
	\endgroup }
\makeatother

\newcommand{\PP}[1]{\mathbb{P}\left(#1\right)}
\newcommand{\Ind}[1]{\mathbbm{1}_{#1}}
\newcommand{\EE}[1]{\mathbb{E}\!\left(#1\right)}

\newcommand{\median}{\mu_{1/2}}

\newcommand*\bell{\ensuremath{\boldsymbol\ell}}
\newcommand{\ba}{\textbf{a}}
\newcommand{\bb}{\textbf{b}}
\newcommand{\bc}{\textbf{c}}
\newcommand{\bx}{\textbf{x}}
\newcommand{\by}{\textbf{y}}

\newcommand{\bG}{\textbf{G}}

\usepackage{mathrsfs}
\DeclareMathAlphabet{\mathpzc}{OT1}{pzc}{m}{it}
\usepackage{xifthen}
\newcommand{\fp}[1][]{ \ifthenelse{\isempty{#1}}{\mathpzc{p}}{\mathpzc{p}(#1)} }

\newcommand{\mS}{\mathcal{S}}
\newcommand{\mH}{\mathcal{H}}
\newcommand{\mP}{\mathcal{P}}
\newcommand{\mF}{\mathcal{F}}

\newcommand{\mG}{\mathcal{G}}
\newcommand{\mK}{\mathcal{K}}

\newcommand{\mB}{\mathcal{B}}

\newcommand{\NN}{\mathbb{N}}
\newcommand{\ZZ}{\mathbb{Z}}

\newcommand{\rank}[1]{\text{rk} ({#1})}
\newcommand{\Sub}[2]{{#1}[{#2}]}

\newcommand{\density}[1]{d \! \left( {#1} \right)}

\newcommand{\floor}[1]{\lfloor {#1} \rfloor}
\newcommand{\ceil}[1]{\lceil {#1} \rceil}
\newcommand{\bigfloor}[1]{\left \lfloor {#1} \right \rfloor}
\newcommand{\bigceil}[1]{\left \lceil {#1} \right \rceil}

\definecolor{darkred}{cmyk}{.3,.9,.80,.2}

\title{On the optimality of the uniform random strategy} 
\date{}
\author{
	Christopher Kusch \thanks{Freie Universit\"at Berlin, Institut
          f\"ur Mathematik und Informatik, Arnimallee 3, 14195 Berlin,
          Germany and Berlin Mathematical School, Germany. E-mail:
          {\tt c.kusch@gmx.net}. Supported by Berlin Mathematical
          School Phase II scholarship.} \and
	Juanjo Ru{\'e} \thanks{Universitat Polit\`ecnica de Catalunya and Barcelona Graduate School of Mathematics, Department of Mathematics, Edificio Omega, 08034 Barcelona, Spain. E-mail: {\tt juan.jose.rue@upc.edu}. Supported by the Spanish Ministerio de Econom\'{i}a y Competitividad project MTM2014-54745-P and the Mar\'ia de Maetzu research grant MDM-2014-0445.} \and
	Christoph Spiegel \thanks{Universitat Polit\`ecnica de Catalunya and Barcelona Graduate School of Mathematics, Department of Mathematics, Edificio Omega, 08034 Barcelona, Spain. E-mail: {\tt christoph.spiegel@upc.edu}. Supported by the Spanish Ministerio de Econom\'{i}a y Competitividad FPI grant under the project MTM2014-54745-P and the Mar\'ia de Maetzu research grant MDM-2014-0445.} \and
	Tibor Szab{\'o} \thanks{Freie Universit\"at Berlin, Institut f\"ur Mathematik und Informatik, Arnimallee 3, 14195 Berlin, Germany. E-mail: {\tt szabo@math.fu-berlin.de}. Supported by GIF grant G-1347-304.6/2016.}
}

\begin{document}
\maketitle

\begin{abstract}
Biased Maker-Breaker games, introduced by Chv\'atal and Erd\H os, are central to the field of positional games and have deep connections to the theory of random structures. The main questions is to determine the smallest bias needed by Breaker to ensure that Maker ends up with an independent set in a given hypergraph. Here we prove matching general winning criteria for Maker and Breaker when the game hypergraph satisfies certain 'container-type' regularity conditions. This will enable us to answer the main question for hypergraph generalizations of the $H$--building games studied by Bednarska and {\L}uczak~\cite{BL00} as well as a generalization of the van der Waerden games introduced by Beck~\cite{Be81}. We find it remarkable that a purely game-theoretic deterministic approach provides the right order of magnitude for such a wide variety of hypergraphs, while the analogous questions about sparse random discrete structures are usually quite challenging.
\end{abstract}

\section{Introduction}

For a positive integer $k\in \NN$, a \emph{$k$--term arithmetic progression} (or {\em $k$--AP}) is a set of integers that can be written in the form $\{ a, a+d, \ldots , a+(k-1)d \}$ for some $a\in \ZZ$ and $d \in \NN$. The classical theorem of van der Waerden~\cite{vdW27} states that for every $k \in \NN$ there exists an integer $n$ such that any two-colouring of $[n] : = \{1,...,n\}$ contains a monochromatic $k$--AP. The smallest such integer $n$ is called the {\em  (2-color) van der Waerden number} and is denoted by $W(k)$. The determination of $W(k)$ is one of the notorious open problems of combinatorics, with a rich history and connections to many other branches of mathematics.  The best known upper and lower bounds on $W(k)$ are very far from each other: there are several lower bounds of the form $2^{k(1+o(1))}$~\cite{Berlekamp68, ZoltanSzabo}, while the best known upper bound, due to Gowers~\cite{Gow2001} is a tower function of height five.

Beck~\cite{Be81} introduced \emph{van der Waerden games} as the positional games played on the set $[n]$ by two players, Maker and Breaker, who take turns in occupying integers that have previously not been occupied by any of the players, with Maker going first. For an integer $k\in \NN$ Maker wins the van der Waerden game (or {\em $k$--AP game}) if he manages to occupy a $k$--AP, while Breaker wins otherwise (that is, if he occupies at least one integer in every $k$--AP). Van der Waerden's Theorem implies that Breaker cannot win a $k$--AP game on $[W(k)]$ without occupying a $k$--AP for himself. From this, a standard strategy stealing argument (see \cite{Be08}) shows that Breaker cannot have a winning strategy on $[W(k)]$, and consequently Maker has to have one. Beck defined $W^{\star}(k)$ to be the smallest integer $n$, such that Maker has a winning strategy in the $k$--AP game played on $[n]$. By the above, $W^{\star}(k)\leq W(k)$ for every $k\in \NN$. In strong contrast to the enormous gap between the known upper and lower bounds for $W(k)$, Beck established that the van der Waerden game number is single exponential: $W^{\star}(k)= 2^{k(1+o(1))}$.

{\em Biased games} represent a central direction of the field of positional games, with deep connections to the theory of random structures. The notion was first suggested by Chv\'atal and Erd\H{o}s~\cite{CE78} while investigating the connectivity game, hamiltonicity game and triangle-building game played on the edges of the complete graph. Given a hypergraph $\mH = (V(\mH), E(\mH))$ and a positive integer \emph{bias} $q$, we define the \emph{$q$--biased Maker-Breaker game $\bG(\mH;q)$} as the game where \emph{Maker} and \emph{Breaker} take turns occupying previously unoccupied vertices from $V(\mH)$, with Maker going first and occupying one vertex in each round and Breaker occupying up to $q$. Maker wins if his selection completely covers an edge from $\mH$ and Breaker wins otherwise. In other words, Breaker wins if and only if the vertices of Maker form an independent set in $\mH$. Note that by definition the game cannot end in a draw. Given a hypergraph $\mH$, one is interested in determining the \emph{threshold bias $q(\mH)$}, defined to be the smallest integer $q\in \NN$ for which  Breaker has a winning strategy in the $q$--biased game $\bG(\mH;q)$.

The relationship of biased games to random discrete structures originates from the simple observation that if both Maker and Breaker occupy their vertices uniformly at random from the remaining free vertices, then Maker ends up occupying a uniform random set of size $|V(\mH)|/(b+1)$, where $b$ is the bias of Breaker. For exactly what size a uniform random subset will likely be independent (and hence for what bias $b$ will {\tt RandomBreaker} likely win against {\tt RandomMaker}) is a central line of research in the study of random discrete structures. This threshold set size was investigated and is well-known for many important hypergraphs.

Chv\'atal and Erd\H os~\cite{CE78} were mostly concerned about graph games where the vertex set of the game hypergraph ${\cal H}$ is the edge set $E(K_n)$ of the complete graph on $n$ vertices and ${\cal H}$ represents a graph property. For the connectivity game they proved the surprising phenomenon that the threshold bias for the game with ‘clever’ players is of the same order as the ‘likely’ threshold bias in the game with random players. In other words the {\em result} of the clever game and the random game are likely to be the same for almost all biases except an interval of length of smaller order than the value of the threshold bias. This result was later strengthened to establish the equality of the constant factors of the threshold biases and extended also for the Hamiltonicity game~\cite{GebauerSzabo, Krivelevich}.

For the triangle-building game Chv\'atal and Erd\H os resolved the issue of the threshold bias via ad-hoc strategies, and found the above phenomenon very much {\em not} to be true. Namely the {\em clever} ad-hoc Breaker-strategy of Chv\'atal and Erd\H os used a bias much smaller than is needed for {\tt RandomBreaker} in the random game, and won not only against a random,  but also against a clever Maker. The real reason for this and the question for other $H$--building games remained a mystery until its spectacular resolution by Bednarska and \L uczak~\cite{BL00}. They have established a {\tt CleverBreaker} strategy that won the $H$--building game using a much smaller bias than what is needed in the random game and they showed that their bias is optimal up to a constant factor. Writing
	\begin{equation*}
		m_2(H) = \max_{\substack{G \subset H \\ v(G) \geq 3}} \frac{e(G) - 1}{v(G) - 2},
	\end{equation*}
	their result can be stated as follows.
\begin{theorem}[Bednarska-\L uczak]
	For every graph $H$ with at least three non-isolated vertices, the threshold bias of the $H$--building game is of order $n^{1/m_2(H)}$.
\end{theorem}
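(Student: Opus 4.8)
The theorem amounts to the two one-sided bounds $q(\mH) \leq C n^{1/m_2(H)}$ and $q(\mH) \geq c n^{1/m_2(H)}$, where $\mH$ is the $H$-building hypergraph (whose vertex set is $E(K_n)$ and whose edges are the edge sets of copies of $H$) and $0 < c \leq C$ depend only on $H$; equivalently, Breaker wins $\bG(\mH;q)$ once $q \geq Cn^{1/m_2(H)}$, while Maker wins once $q \leq cn^{1/m_2(H)}$. The two directions need completely different ideas. For the Breaker bound it is convenient to fix, once and for all, a strictly $2$-balanced subgraph $G^\star \subseteq H$ with $v(G^\star) \geq 3$ and $m_2(G^\star) = m_2(H)$ (take a subgraph attaining $m_2(H)$ that is minimal in vertices and then in edges; minimality forces strict $2$-balancedness); since every copy of $H$ contains a copy of $G^\star$, it then suffices for Breaker to keep Maker from ever completing a copy of $G^\star$.

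\medskip
\noindent\emph{Maker's side: the uniform random strategy.} Assume $q \leq c n^{1/m_2(H)}$ and let Maker play the uniform random strategy, claiming in each of $N := \lfloor \eta n^2/q \rfloor$ rounds a uniformly random still-free edge of $K_n$, where $\eta > 0$ is a small absolute constant. The heuristic is that her final graph $M$ then looks like $G(n,N)$, a random graph of density $N/\binom{n}{2} \asymp n^{-1/m_2(H)}$, and a graph of that density robustly contains a copy of $H$. To make this a proof I would: (i) observe that Breaker claims at most $qN = \eta n^2$ edges in all, so for $\eta$ small a $(1-o(1))$-fraction of the $\binom{n}{2}$ edges is free in every round, the per-round selection probabilities are $\Theta(n^{-2})$, and Breaker can ``protect'' (i.e.\ claim some edge of) only an $O(\eta)$-fraction of the $\Theta(n^{v(H)})$ copies of $H$ in $K_n$, each edge lying in $O(n^{v(H)-2})$ of them; (ii) let $X$ count copies of $H$ in $M$ and show by a first-moment estimate that $\EE{X} = \Theta\!\left(n^{v(H)}(N/n^2)^{e(H)}\right)$ is bounded away from zero, the density $N/n^2 \asymp n^{-1/m_2(H)}$ lying at or above the appearance threshold $n^{-1/m(H)}$ for $H$ in a random graph since $m(H) \leq m_2(H)$ (where $m(H) = \max_{G \subseteq H,\, e(G)\geq 1} e(G)/v(G)$); (iii) bound $\Var{X}$ by breaking the second-moment sum over ordered pairs of copies of $H$ according to the isomorphism type of their shared subgraph $G$: a pair sharing $G$ contributes $\Theta\!\left(n^{2v(H)-v(G)}(N/n^2)^{2e(H)-e(G)}\right)$, which is $o(\EE{X}^2)$ for every such $G$ --- and this is the precise role of $m_2(H)$, the estimate holding because $N/n^2 \asymp n^{-1/m_2(H)}$ and the $2$-density of $G$ is at most $m_2(H)$; (iv) conclude $\PP{X=0} = o(1)$ by Chebyshev's inequality. (The case of $H$ a matching, where the threshold bias is trivially $\Theta(n^2)$, is dealt with separately and directly.) Thus Maker's random strategy defeats every fixed Breaker strategy with positive probability; as the game is finite and drawless, Breaker has no winning strategy, so Maker has one.

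\medskip
\noindent\emph{Breaker's side: a potential strategy.} A genuine Breaker strategy is needed, since a passive Breaker would let Maker funnel her $O(n^{2-1/m_2(H)})$ edges into a single copy of $H$. With $G^\star$ as above, Breaker plays to keep Maker's graph free of copies of $G^\star$, by a weight-function argument of Erd\H os--Selfridge type generalising the ad hoc triangle strategy of Chv\'atal and Erd\H os --- but genuinely refined in Beck's manner, since the winning hypergraph is far from almost-disjoint and the plain Erd\H os--Selfridge criterion would only yield the much weaker bias $n^{v(G^\star)/(e(G^\star)-1)}$. Breaker maintains a ``local sparsity'' invariant --- essentially, for each subgraph $F$ of $G^\star$, a cap on the number of copies of $F$ that Maker's graph may contain through any fixed edge --- chosen so that a single Maker move can create at most $O(n^{1/m_2(G^\star)})$ copies of $G^\star$ that are one edge short of completion; Breaker then devotes part of his bias to claiming the completion edges of all of these, and the rest to restoring the invariant. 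The caps are calibrated using that $G^\star$ is strictly $2$-balanced: every subgraph $F$ extends to $G^\star$ at vertex-to-edge cost at most $1/m_2(G^\star)$, which is exactly what makes a bias of $n^{1/m_2(G^\star)} = n^{1/m_2(H)}$ both sufficient to block the threats and enough to keep the invariant consistent for the whole game. Hence Maker never builds a copy of $G^\star$, and a fortiori none of $H$.

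\medskip
\noindent\emph{The main obstacle.} I expect the real work to be on Maker's side: upgrading ``$M$ behaves like $G(n,N)$'' to a theorem against a Breaker who adapts to Maker's coin flips. The first and second moments of $X$ must be controlled for the genuine conditional law of Maker's edges --- the bound ``Breaker can protect only an $O(\eta)$-fraction of the copies'' is what keeps $\EE{X}$ from collapsing --- and one must then check that every shared-subgraph contribution to $\Var{X}$ is $o(\EE{X}^2)$; this is precisely the computation that forces the $2$-density $m_2(H)$, rather than the naive density $m(H)$, to govern the threshold. The Breaker side is more hands-on but has its own difficulty: choosing the local-sparsity caps (equivalently, the potential weights) so that they are at once maintainable under a bias-$n^{1/m_2(H)}$ response and tight enough to bound the number of near-complete copies of $G^\star$ created per turn --- and it is the strict $2$-balancedness of $G^\star$ that reconciles these two demands at the exponent $1/m_2(H)$.
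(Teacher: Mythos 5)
Your Breaker half is, in outline, the route the paper (and Bednarska--\L uczak) actually takes: reduce to a strictly $2$--balanced $G^\star$ with $m_2(G^\star)=m_2(H)$, and prevent copies of $G^\star$ by a refined Erd\H{o}s--Selfridge argument with the bias split among several sub-strategies that control clustered near-complete copies. It is only a schematic description, but the plan is sound. The genuine gap is on Maker's side.

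Your steps (ii)--(iv) --- first moment, second moment, Chebyshev --- prove that a \emph{uniform random} set of $N$ edges contains a copy of $H$ with probability $1-o(1)$. That statement does not survive an adaptive Breaker, and the mechanism you offer to rescue it ("Breaker can only protect an $O(\eta)$-fraction of the $\Theta(n^{v(H)})$ copies in $K_n$") measures the wrong quantity. The copies Breaker must destroy are not a uniform sample of all copies in $K_n$; they are the copies lying inside Maker's picked set, and at the threshold density there are only about $n^{(2e(H)-v(H))/(e(H)-1)} \ll n^2$ of these (e.g.\ $\Theta(n^{3/2})$ for the triangle), while Breaker plays $\eta n^2$ edges and sees Maker's moves as they are made. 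So Breaker has vastly more moves than there are copies to kill, and a bound of the form $\PP{X=0}=o(1)$ gives him no obstruction: he only needs to turn a tiny, adaptively chosen fraction of Maker's picks into failures to delete every copy. What is actually needed --- and what the paper proves via the Janson--\L uczak--Ruci\'nski inequality together with a union bound over all $\binom{M}{\delta M}$ subsets (the $\delta$-stability statement of Theorem~\ref{thm:decay}) --- is the much stronger \emph{resilience} property: with probability $1-e^{-\Omega(M)}$, \emph{every} $\delta$-fraction of the picked set contains a copy. Since the picked set is uniformly random irrespective of Breaker's play, and a separate Markov argument shows at most a $(1-\delta)$-fraction of picks become failures, the surviving edges still contain a copy. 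Chebyshev cannot deliver the exponential tail this union bound requires, so your variance computation, even if carried out exactly as described, does not close the argument; replacing it with the Janson-type lower-tail bound and the subset union bound is not a technical refinement but the essential missing idea.
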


To gain an intuition for the order of this threshold bias, it is worthwhile to investigate the general lower bound on $q({\cal H})$, which is delivered by the uniform random strategy of Maker. Namely, if Maker occupies a uniformly random free element of $V({\cal H})$ in each round and wins with non-zero probability against a {\tt CleverBreaker} playing with bias $b$, then clearly $b\leq q({\cal H})$. It is important to note though that in this `half-random' scenario {\tt RandomMaker}'s random set of size $|V({\cal H})|/(b+1)$ is not anymore uniformly random, it depends very much on {\tt CleverBreaker}'s strategy. So in analyzing the largest such $b$, it is not anymore relevant how sparse a uniformly random set of size $|V(\mH)|/(b+1)$ is likely to contain a hyperedge. It turns out however that the success of the uniform random strategy of Maker can be salvaged if, for some constant $\varepsilon >0$, the uniform random set of size $\varepsilon |V(\mH)|/(b+1)$ not only is expected to contain a hyperedge, but more resiliently, {\em every $\delta$--fraction of it} is expected to contain a hyperedge, for some $\delta < 1$. In other words, if the random induced subhypergraph of ${\cal H}$ of density $c/(b+1)$ is ‘globally resilient' to the property of being independent (for the precise definitions concerning resilience, see \cite{SudakovVu}). Then, if {\tt RandomMaker} can ensure that, despite {\tt CleverBreaker}'s strategy, he actually occupies at least a $\delta$--fraction of some uniform random subset of that size, he wins. Bednarska and \L uczak~\cite{BL00} managed to implement this plan and couple it with an appropriate Breaker strategy.

The study of the smallest possible such resilience constant $\delta$ in various random hypergraphs was a driving force of research in discrete probability in the past three decades. Eventually it has lead to the determination of the smallest possible constant in many natural scenarios \cite{CG10, Sch12}. In particular it is known in many cases what is the order of the smallest random set size for which the corresponding resilience constant is less than $1$ (a much easier task). This included the $e(H)$--uniform hypergraph ${\cal B}_n(H)$ on $E(K_n)$ where the hypergedges correspond to the copies of a fixed graph $H$ in $K_n$; this is the setup of the Bednarska-\L uczak result.

Building on their ideas, we extend the results of Bednarska and \L uczak to a whole range of other hypergraphs. All our results will follow from two general winning criteria, one for Maker and one for Breaker, which apply for hypergraphs possessing some form of ‘container-type’ regularity conditions, that properly separate the maximum degree of $\ell$--element vertex sets from the average degree. These hypergraphs in particular include the ones corresponding to  Beck's van der Waerden games. More generally, we also obtain tight results for a much broader class of games we call {\em Rado games}, in which Maker's goal is to occupy a solution to an arbitrary given linear system of equations. Finally we also extend the tight results of Bednarska and {\L}uczak~\cite{BL00} to hypergraph-building games for arbitrary fixed uniformity.

It is worthwhile to note that the analogous extension from graphs to hypergraphs represented a significant jump in difficulty for the analogous sparse random problems~\cite{CG10, Sch12}, while here we obtain it for a wider classes of hypergraphs, using a deterministic Breaker strategy. Furthermore, the container method, so effective there, only provides a Maker winning strategy with a $\log$--factor below the optimal bias.

In the remainder of this introduction, we will first give a formal statement of our two winning criteria. This will be followed by the statement of our results about Rado games -- the generalization of Beck's van der Waerden games for linear systems. Lastly, we will describe our results regarding hypergraph-building games.

\subsection{General Winning Criteria}

In order to simplify notation we often identify the hypergraph $\mH$ with its edge set $E(\mH)$. We denote the number of vertices of a hypergraph $\mH$ by $v(\mH)$, the number of edges by $e(\mH)$ and its \emph{density} by $\density{\mH} = e(\mH) / v(\mH)$. Given a subset $S \subseteq V(\mH)$ of vertices, let $\deg(S) = | \{ e\in \mH: S \subset e \}|$. For any integer $\ell \in \NN$ the \emph{maximum $\ell$--degree} is given by $\Delta_{\ell}(\mH) = \text{max} \{\deg(S): S \subseteq V(\mH), |S| = \ell \}$. Note that if $\mH$ is $k$--uniform for some integer $k \in \NN$, then $\Delta_k(\mH) = 1$ and $\Delta_\ell(\mH) = 0$ for all integers $\ell > k$.

Given some sequence of hypergraphs $\mH = (\mH_n)_{n \in \NN}$, the first statement now gives a criterion for a lower bound of the threshold biases of $\bG(\mH_n;q)$.
\begin{theorem} \label{cor:MakerWinCriterion}
	For every $k \geq 2$ the following holds. If $\mH = (\mH_n)_{n \in \NN}$ is a sequence of $k$--uniform hypergraphs that satisfies
	\begin{align*}
		 \text{(M1)} \enspace \Delta_1(\mH_n) = O \big( \density{\mH_n} \big), \qquad \text{(M2)} \enspace \Delta_2 (\mH_n) = o \big( \density{\mH_n} \big) \quad \text{and} \quad \text{(M3)} \enspace \density{\mH_n} = o \big( v(\mH_n)^{k-1} \big)
	\end{align*}
	then the threshold biases of the games played on $\mH_n$ satisfy
	\begin{equation*}
		q(\mH_n) = \Omega \left( \min_{2 \leq \ell \leq k} \left( \frac{\density{\mH}}{\Delta_{\ell}(\mH)} \right)^{\frac{1}{\ell-1}} \right).
	\end{equation*}
\end{theorem}
The proof of Theorem~\ref{cor:MakerWinCriterion} is based on a random strategy for Maker and will be given in Section~\ref{sec:MakerProof}.

The second statement gives a criterion for an upper bound of the threshold biases of $\bG(\mH_n;q)$.
\begin{theorem} \label{cor:BreakerWinCriterion}
	For every $k \geq 2$ the following holds. If $\mH = (\mH_n)_{n \in \NN}$ is a sequence of $k$--uniform hypergraphs such that $v(\mH_n) \to \infty$ and there exists an $\epsilon > 0$ so that for every $2 \leq \ell \leq k-1$ we have
	\begin{align*}
		\Delta_{\ell} (\mH_n)^{\frac{1}{k-\ell}} \, v(\mH_n)^{\epsilon} = O \big( \Delta_1 (\mH_n)^{\frac{1}{k-1}} \big)
	\end{align*}
	then the threshold biases of the games played on $\mH_n$ satisfy
	\begin{equation*}
		q(\mH_n) = O \left( \Delta_1 (\mH_n)^{\frac{1}{k-1}} \right).
	\end{equation*}
\end{theorem}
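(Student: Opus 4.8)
The natural approach is to exhibit an explicit Breaker strategy that prevents Maker from ever completing an edge, and to show that bias $q = C\,\Delta_1(\mH_n)^{1/(k-1)}$ suffices for a suitable constant $C$. The standard tool here is a potential-function / pairing-type argument in the spirit of the Erdős–Selfridge criterion and its biased generalization (Beck): one assigns to each edge $e \in \mH_n$ a danger weight that grows as Maker occupies more of its vertices, and one shows Breaker can keep the total danger bounded below the value at which some edge would become fully Maker-owned. First I would fix, at each stage of the game, the family of \emph{threatened} edges — those all of whose vertices are either Maker's or still free — and track for each such edge the number $j$ of free vertices remaining; an edge with $j=0$ is a Maker win, so the goal is to forbid that. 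I would weight a threatened edge with $j$ free vertices by something like $(q+1)^{-j}$ (or a variant tuned to the $k$-uniform setting), so that the total potential is $\Phi = \sum_{e \text{ threatened}} (q+1)^{-j(e)}$.

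The next step is the bookkeeping of how $\Phi$ changes in one round. When Maker claims a free vertex $x$, every threatened edge through $x$ has its free-count drop by one, multiplying its weight by $(q+1)$; the increase in $\Phi$ from this is at most $q \cdot (\text{current weighted degree of } x)$. Breaker then gets to claim $q$ vertices, and claiming a vertex $y$ kills every threatened edge through $y$ (removing its weight entirely). The heart of the argument is that Breaker, by greedily picking the $q$ free vertices of largest current weighted degree, destroys at least a $q/(q+1)$-ish fraction of the weight that Maker's move could have created — this is where the bias enters and where one needs $q$ to be large enough relative to the local structure. Summing a telescoping inequality over all rounds, one concludes that $\Phi$ stays below $1$ throughout (given that it starts below $1$), hence no edge is ever completed. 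The reduction of the initial potential $\Phi_0 = e(\mH_n)(q+1)^{-k}$ to something $o(1)$, or at least $<1$, is immediate from $q+1 \gtrsim \Delta_1^{1/(k-1)}$ once one relates $e(\mH_n)$ and $v(\mH_n)$ via $\Delta_1$: indeed $e(\mH_n) \le \Delta_1(\mH_n)\, v(\mH_n)/k$.

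The subtlety — and the reason the hypotheses on the intermediate maximum degrees $\Delta_\ell$ are needed — is that the naive Erdős–Selfridge bound is \emph{too weak} when edges overlap heavily: claiming one Breaker vertex may destroy many threatened edges at once, but Maker can also create many new threats with a single move if some $\ell$-set has huge degree. To get the clean bound $q = O(\Delta_1^{1/(k-1)})$ rather than something driven by the worst intermediate degree, I would not bound the weighted degree of a single vertex by a global maximum, but instead control the \emph{increase of $\Phi$ across an entire Breaker–Maker exchange} using the hypothesis $\Delta_\ell(\mH_n)^{1/(k-\ell)} v(\mH_n)^\epsilon = O(\Delta_1(\mH_n)^{1/(k-1)})$. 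Concretely, the weight contributed to a single free vertex $x$ by edges in which $x$ already has $k-j$ Maker-neighbours is governed by $\Delta_{k-j+1}$, and the inductive/telescoping estimate must absorb a term like $\Delta_\ell \cdot (q+1)^{-(k-\ell)} \cdot v(\mH_n)^{-\text{something}}$; the hypothesis is exactly calibrated so each such term is $o(1)$. I expect this balancing of the $\Delta_\ell$ contributions — showing that for the right choice of weights and Breaker's greedy rule, the per-round potential increase is nonpositive simultaneously for all "levels" $\ell$ — to be the main obstacle, and the one where the $v(\mH_n)^\epsilon$ slack and the $v(\mH_n) \to \infty$ assumption get spent. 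Once that one-round inequality is established, the rest is a routine summation and the conclusion $q(\mH_n) = O(\Delta_1(\mH_n)^{1/(k-1)})$ follows.
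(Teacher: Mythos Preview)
Your plan has a concrete gap at the very first step: the initial potential $\Phi_0 = e(\mH_n)(q+1)^{-k}$ is \emph{not} small when $q+1 \asymp \Delta_1^{1/(k-1)}$. From $e(\mH_n) \le \Delta_1(\mH_n)\, v(\mH_n)/k$ you only get
\[
\Phi_0 \;\le\; \frac{\Delta_1(\mH_n)\, v(\mH_n)}{k\,(q+1)^k} \;\asymp\; \frac{v(\mH_n)}{\Delta_1(\mH_n)^{1/(k-1)}},
\]
and the hypotheses of the theorem give at best $\Delta_1^{1/(k-1)} \gtrsim v(\mH_n)^{\epsilon}$ (take $\ell=k-1$ and use $\Delta_{k-1}\ge 1$), so $\Phi_0 \gtrsim v(\mH_n)^{1-\epsilon}\to\infty$. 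For a concrete instance, in the strictly $2$--balanced $H$--building game one checks that $\Phi_0$ grows like a positive power of $n$. Since the Erd\H os--Selfridge/Beck potential argument only yields the invariant ``$\Phi$ is non-increasing per round'', starting from $\Phi_0\gg 1$ gives you nothing; and no refinement of the per-round bookkeeping using the $\Delta_\ell$ hypotheses can repair this, because the obstruction is the size of $\Phi_0$, not the round-by-round change.

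This is exactly why the paper (following Bednarska--\L uczak) does \emph{not} run Erd\H os--Selfridge on $\mH_n$ itself. Instead it splits Breaker's bias into three pieces and runs the potential argument on two \emph{auxiliary} hypergraphs whose edges are large unions of overlapping winning sets: the hypergraph of \emph{simple $t$--fans} (Lemma~\ref{lem:fans}) and the hypergraph of \emph{$t$--clusters} (Lemma~\ref{lemma:dangerousflowers}), with $t$ chosen on the order of $\log v(\mH_n)$. Because these auxiliary edges have size $\Theta(t\cdot k)$ rather than $k$, the factor $(q+1)^{-|F|}$ kills the polynomial-in-$v(\mH_n)$ count and the biased Erd\H os--Selfridge potential genuinely starts below $1$. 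The $\Delta_\ell$ hypotheses enter precisely in bounding the number of $t$--clusters with a given intersection characteristic. These two sub-strategies guarantee that after any Maker move the number of new ``dangerous almost-complete solutions'' is at most $q/2$, which a third direct-blocking sub-strategy then eliminates. The missing idea in your proposal is this passage to auxiliary hypergraphs of clusters/fans; without it the potential method cannot reach the target bias.
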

The proof will be given in Section~\ref{sec:BreakerProof} and constructs an explicit winning strategy for Breaker through multiple applications of the biased Erd\H{o}s-Selfridge Criterion of Beck together with a bias-doubling strategy that mimics a common alteration approach of the probabilistic method.

A joint statement that determines the exact order of the threshold bias for a wide class of games will be stated as a remark in the last section. The approach used both for the proof of Maker's criterion, as well as to devise Breaker's optimal strategy and prove its validity, will follow the general lines of the proofs from~\cite{BL00}.

\subsection{Rado Games}

Arithmetic progressions of length $k$ (or {\em $k$--AP} for short) can be described as the non-trivial solutions of the linear homogenous equation system of $k-2$ equations and $k$ variables
\begin{equation*}
	\begin{pmatrix}
	1 & -2 & 1 &  \\
	& 1 & -2 & 1 \\
	&  & & ... &  & \\
	&  &   & & 1 & -2 & 1 &
	\end{pmatrix} \cdot \bx^T = {\bold 0}^T.
\end{equation*}
In our paper we treat positional games corresponding to an {\em arbitrary} integer matrix $A$. The game is played on the set $[n]$ and Maker's goal is to occupy a solution to $A \cdot \bx^T = \bb^T$ for some given vector $\bb \in \ZZ^r$. We call these games \emph{Rado games}, motivated by the classic result of Rado in Ramsey Theory~\cite{GRS90}. This notion extends van der Waerden games introduced by Beck~\cite{Be81}.

In the context of Rado's Theorem, combinatorial research focused mostly on {\em proper} solutions to the homogeneous case, that is solutions with pairwise distinct entries and $\bb = {\bold 0}$. We state our results in the case when occupying a proper solution is required for Maker to win. The effect that solutions with repeated components of solutions have on the game will be discussed in Subsection~\ref{subsec:repeatedcomponents}.

For an integer-valued matrix $A\in \ZZ^{r\times m}$ and integer-valued vector $\bb \in \ZZ^r$, we denote by
\begin{equation*}
	S(A,\bb) = \{ \bx \in \ZZ^m : A \cdot \bx^T = \bb^T \}
\end{equation*}
the set of \emph{all} integer solutions and let
\begin{equation*}
	S_0(A,\bb) = \{ \bx = (x_1, \dots , x_m) \in S(A,\bb) : x_i \neq x_j \text{ for } i \neq j \}
\end{equation*}
denote the set of all {\em proper} integer solutions. The $m$--uniform hypergraph of the game that accepts only proper solutions from $[n]$ is denoted by
\begin{equation*}
	\mS_0(A,\bb,n) = \big\{ \{ x_1 , \dots , x_m \} : (x_1, \dots , x_m) \in S_0(A,\bb) \cap [n]^m \big\}.
\end{equation*}

Next, we give the definition of some basic properties which largely determine the behaviour of the game. For $Q \subseteq [m]$, we denote by $A^Q$ the matrix obtained from $A$ by keeping only the columns indexed by $Q$. By convention, $A^{\emptyset}$ is the empty matrix of rank $0$.
We then say that a matrix $A \in \ZZ^{r \times m}$ is
\begin{itemize} \setlength\itemsep{0em}
	\item[(i)] \emph{positive} if $S(A,{\bold 0}) \cap \NN^m \neq \emptyset$, that is, there are solutions whose entries lie in the positive integers,
	\item[(ii)] \emph{abundant} if $\rank{A} > 0$ and $\rank{A^Q} = \rank{A}$ for all $Q \subseteq [m]$ satisfying $|Q| \geq m-2$, that is, $A$ has rank strictly greater than $0$ and every submatrix obtained from $A$ by deleting at most two columns must be of the same rank as $A$.
\end{itemize}

The importance of the first notion is easy to justify: if the homogeneous solution space is disjoint from the positive quadrant in which we are playing, then either it is contained in a subspace or the (inhomogeneous) game hypergraph will contain at most a finitely bounded number of winning sets for all $n$. In the former case there must exist at least one $i \in [m]$ such that $x_i = b_i$ for any $\bx = (x_1,\dots,x_m) \in \mS(A,\bb)$. The game would therefore be massively determined by which player occupies the point(s) $b_i$: if there exist several distinct such $b_i$, then Breaker can occupy at least one of them in his first round and hence win the game. If there exists just one such value, Maker occupies it with his first move and the game is reduced to a lower-dimensional one. In the later case, where the game hypergraph contains only a finitely bounded number of winning set, the bias threshold would simply be bounded by some positive constant.

The second definition might initially be somewhat obscure, but we will see that it is of great importance. On the one hand we will show that if a positive matrix is also abundant, then for any vector $\bb \in A(\mathbb{Z}^r)$ the system  $A \cdot \bx^T = \bb^T$ has many proper positive solutions, and hence the corresponding positional game is interesting. On the other hand non-abundant systems turn out to be ‘degenerate’ in some sense and in particular,  Breaker wins the game with a bias of at most $2$.

For readers familiar with the notion of \emph{partition} and \emph{density regular} (or \emph{invariant}) matrices in the homogeneous setting, note that they are trivially positive. See~\cite{Sp16} for an easy proof that they are also abundant..

\medskip

Next, in order to state our main theorem for Rado games, we define a parameter for abundant matrices. (The same parameter was introduced earlier by Rödl and Ruci\'nski~\cite{RR97} for partition regular matrices.) Let $r_Q = \rank{A} - \rank{A^{\widebar{Q}}}$ for any set of column indices $Q \subseteq [m]$ where we set $\rank{A^{\emptyset}} = 0$. The \emph{maximum $1$--density} of an abundant matrix $A \in \ZZ^{r \times m}$ is defined as
\begin{equation*} \label{eq:max2density}
	m_1(A) = \max_{\substack{Q \subseteq [m] \\ 2 \leq |Q|}} \frac{|Q|-1}{|Q|-r_Q-1}.
\end{equation*}
We will later show in Lemma~\ref{lemma:welldefined} that this parameter is indeed well-defined, that is $|Q|-r_Q-1 > 0$ for all $Q \subseteq[m]$ satisfying $|Q| \geq 2$ if $A$ is abundant. Note that this parameter has some clear parallels to the $2$--density of a graph. For more details on the connections to their result and others in the area of random sets and graphs, we refer to the remarks given in Subsection~\ref{subsec:probintuition}.

We refer to the biased Maker-Breaker game played on the hypergraph $\mS_0(A,\bb,n)$ as the \emph{Maker-Breaker $(A,\bb)$--game on $[n]$}. Our main result regarding Rado games states the asymptotic behaviour of the threshold bias of these games when $A$ is abundant.

\begin{theorem} \label{thm:ThresholdGeneralizedVdWGames-Proper}
	For every positive and abundant matrix $A \in \ZZ^{r \times m}$ and vector $\bb \in \ZZ^r$ such that $S(A,\bb) \neq \emptyset$, the threshold bias of the Maker-Breaker $(A,\bb)$--game on $[n]$ satisfies $q(\mS_0(A,\bb,n)) = \Theta \left( n^{1/m_1(A)} \right)$.
\end{theorem}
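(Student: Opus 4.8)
The plan is to deduce the theorem by verifying that the hypergraph sequence $\mH_n = \mS_0(A,\bb,n)$ satisfies the hypotheses of the two general winning criteria, Theorem~\ref{cor:MakerWinCriterion} and Theorem~\ref{cor:BreakerWinCriterion}, and that in both the resulting expression equals $n^{1/m_1(A)}$ up to constants. The first task is purely enumerative: for each $Q \subseteq [m]$ with $|Q| = \ell$, I would count the number of proper solutions in $[n]^m$ whose coordinates indexed by $\overline{Q}$ are fixed. Since $A$ is abundant and positive, Lemma~\ref{lemma:welldefined} guarantees $|Q| - r_Q - 1 > 0$, and a standard counting argument (fixing the $m-\ell$ coordinates outside $Q$ pins down $r_Q$ degrees of freedom among the remaining $\ell$, leaving $\ell - r_Q$ free parameters, one of which is absorbed by homogeneity / the proper-solution condition) should give
\begin{equation*}
	\Delta_{\ell}(\mH_n) = \Theta\!\left( n^{\ell - r_Q^{\max} - 1} \right) \quad \text{and in particular} \quad \density{\mH_n} = \Theta\!\left( n^{m - \rank{A} - 1} \right),
\end{equation*}
where the exponent of $\Delta_\ell$ is maximised over all $Q$ of size $\ell$. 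The positivity of $A$ is what ensures these counts are genuinely of order $n$ to the stated power rather than smaller; this is presumably the content of the earlier promised claim that positive abundant systems have many proper positive solutions, and I would invoke or reprove that here. The main obstacle is making this count uniform and two-sided (both $O$ and $\Omega$) while correctly handling the proper-solution restriction, i.e.\ subtracting off solutions with a repeated coordinate without changing the order of magnitude — this requires the abundance hypothesis precisely so that the degenerate strata are lower-dimensional.

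Once the degree estimates are in hand, verifying Maker's criterion is mechanical: (M1) and (M2) amount to checking $\Delta_1(\mH_n), \Delta_2(\mH_n)$ are $O(\density{\mH_n})$ and $o(\density{\mH_n})$ respectively, which follow because deleting one or two columns from an abundant $A$ does not drop the rank, so $r_Q = \rank A$ for $|Q| \geq m-2$ and the exponents match up (with the strict $o$ in (M2) coming from the single extra fixed coordinate); (M3) is immediate since $\density{\mH_n} = \Theta(n^{m-\rank A - 1})$ and $\rank A \geq 1$. The resulting lower bound is $\Omega\big(\min_{2 \leq \ell \leq k}(\density{\mH_n}/\Delta_\ell(\mH_n))^{1/(\ell-1)}\big)$, and substituting the counts turns each term into $n^{(|Q|-r_Q-1)/(|Q|-1)}$ for the optimal $Q$ of size $\ell$; taking the minimum over $\ell$ and the max over $Q$ inside recovers exactly $n^{1/m_1(A)}$ by the definition of $m_1(A)$ as the maximum of $(|Q|-1)/(|Q|-r_Q-1)$.

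For the upper bound I would apply Theorem~\ref{cor:BreakerWinCriterion} with $k = m$. Its hypothesis requires an $\epsilon > 0$ with $\Delta_\ell(\mH_n)^{1/(m-\ell)} n^\epsilon = O\big(\Delta_1(\mH_n)^{1/(m-1)}\big)$ for all $2 \leq \ell \leq m-1$; plugging in $\Delta_\ell(\mH_n) = \Theta(n^{\ell - r_Q - 1})$ and $\Delta_1(\mH_n) = \Theta(n^{m - \rank A - 1})$, this reduces to the strict inequality
\begin{equation*}
	\frac{\ell - r_Q - 1}{m - \ell} < \frac{m - \rank A - 1}{m - 1} = \frac{1}{m_1(A)}\cdot\frac{?}{?}
\end{equation*}
— more precisely one checks $\frac{\ell-r_Q-1}{m-\ell} < \frac{m-\rank A-1}{m-1}$ holds strictly for every proper $Q$, which is again a consequence of abundance (it is the statement that the "middle" densities are dominated by the global one, and the strictness gives the room for $n^\epsilon$). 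This yields $q(\mH_n) = O\big(\Delta_1(\mH_n)^{1/(m-1)}\big) = O\big(n^{(m-\rank A-1)/(m-1)}\big) = O(n^{1/m_1(A)})$, since the maximiser in the definition of $m_1(A)$ is attained at $Q = [m]$ by the same abundance-driven inequality. The remaining care is just bookkeeping: confirming that $Q = [m]$ is indeed the maximiser (so that $\Delta_1$ alone controls the bound) and that all the $o$/$O$/strict-$<$ relations needed by the two criteria hold simultaneously, which I expect to package into one short lemma comparing the exponents $(|Q|-r_Q-1)/(|Q|-1)$ across $Q$.
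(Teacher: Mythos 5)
Your Maker half follows the paper's route and is essentially sound: establish $\density{\mS_0(A,\bb,n)} = \Theta(n^{m-\rank{A}-1})$ and the upper bounds on $\Delta_\ell$, check (M1)--(M3) via abundance (note that abundance gives $r_Q = 0$ for $|Q|\le 2$, not ``$r_Q = \rank{A}$''), and read off $\Omega(n^{1/m_1(A)})$. One slip: your displayed formula $\Delta_\ell(\mH_n) = \Theta(n^{\ell - r_Q - 1})$ is not the degree but (up to the $\Theta$ vs.\ $O$ distinction) the ratio $\density{\mH_n}/\Delta_\ell(\mH_n)$; the correct bound is $\Delta_\ell(\mH_n) = O\big(\max_{|Q|=\ell} n^{(m-\rank{A})-(|Q|-r_Q)}\big)$, and only the density needs a matching lower bound (via the construction of many proper positive solutions, which genuinely uses both positivity and abundance).

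The Breaker half has a real gap. You assert that abundance forces the maximiser in $m_1(A)$ to be attained, strictly, at $Q=[m]$, and dismiss verifying this as bookkeeping. That is false in general: an abundant matrix need not be strictly balanced, and when the maximum of $(|Q|-1)/(|Q|-r_Q-1)$ is attained at a proper subset $Q^* \subsetneq [m]$, two things go wrong with a direct application of Theorem~\ref{cor:BreakerWinCriterion} to $\mS_0(A,\bb,n)$: the hypothesis $\Delta_\ell^{1/(m-\ell)} n^{\epsilon} = O(\Delta_1^{1/(m-1)})$ fails (there is no $n^{\epsilon}$ of room), and even the target $\Delta_1^{1/(m-1)} = \Theta(n^{(m-\rank{A}-1)/(m-1)})$ exceeds $n^{1/m_1(A)}$, so you would prove a weaker upper bound than claimed. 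The paper's proof handles this by first passing to the induced subsystem $B = B(A,Q^*)$ of Lemma~\ref{lemma:subsystemobservations}: one shows $B$ is positive, abundant, strictly balanced with $m_1(B)=m_1(A)$ (Corollary~\ref{cor:strictsubsystem}), and that any Maker set free of proper solutions to $B\cdot\bx^T=\bc^T$ is also free of proper solutions to $A\cdot\bx^T=\bb^T$; only then is the Breaker criterion applied, to $\mS_0(B,\bc,n)$, where strict balancedness supplies exactly the strict inequality you need. This reduction — constructing $B$ via Gaussian elimination, verifying it inherits positivity and abundance, and relating its $r_{Q'}$ parameters to those of $A$ — is the substantive missing ingredient in your proposal, not a routine check.
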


Let us also state a proposition regarding those positive matrices not covered by the previous result. This will be much easier to prove than the result of Theorem~\ref{thm:ThresholdGeneralizedVdWGames-Proper} and one can see that the simple structure of non-abundant matrices strongly favours Breaker compared to the abundant case.

\begin{proposition} \label{prop:ThresholdEasyGeneralizedVdWGames}
	For positive but non-abundant matrix $A \in \ZZ^{r \times m}$ and vector $\bb \in \ZZ^r$, the threshold bias of the Maker-Breaker $(A,\bb)$--game on $[n]$ satisfies $q(\mS_0(A,\bb,n)) \leq 2$.
\end{proposition}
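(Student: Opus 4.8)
The plan is to show that a positive non-abundant matrix $A$ produces a game hypergraph $\mS_0(A,\bb,n)$ that is, in a structural sense, nearly trivial: either it has boundedly many edges, or all its edges pass through a bounded set of ``pivot'' vertices, or the whole family can be covered by boundedly many ``pencils'' each of which Breaker can kill with a single move and a tiny bias. Concretely, non-abundance means there is a set $Q \subseteq [m]$ with $|Q| \geq m-2$ and $\rank{A^Q} < \rank{A}$. First I would reduce to the case $|Q| = m-1$ (if $|Q| = m-2$ then already $|Q \cup \{j\}| = m-1$ drops rank for some $j$, or one can pass to a single deleted column), so that deleting exactly one column, say column $j_0$, strictly lowers the rank of $A$. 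The key consequence I want to extract from this is that in any solution $\bx = (x_1,\dots,x_m) \in S(A,\bb)$, the value $x_{j_0}$ is an \emph{affine function of the other coordinates}, and moreover — because deleting that column really dropped the rank — the remaining equations place no restriction forcing repetition, yet $x_{j_0}$ itself is determined; by symmetry the same holds for whichever column is the ``dependent'' one. So every winning set of $\mS_0(A,\bb,n)$ has the form $\{x_1,\dots,x_{m-1}, f(x_1,\dots,x_{m-1})\}$ for a fixed affine form $f$ with integer coefficients.

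Next I would use the structure of $f$ to build Breaker's strategy. Write $f(\bx') = c_0 + \sum_{i \neq j_0} c_i x_i$. There are two regimes. If some coefficient $c_i$ with $i \neq j_0$ is zero, then coordinate $x_{j_0}$ does not depend on $x_i$, so in fact deleting \emph{two} columns $\{i, j_0\}$ still determines $x_{j_0}$ — iterating, we find a minimal dependent set of columns, and the genuinely non-trivial sub-case is when this minimal set has size $\le 3$; here some coordinate of every solution lies in a bounded set of arithmetic ``anchor'' values, and Breaker, playing with bias $2$, occupies on his first move the (at most two) anchor points that are realized by infinitely many solutions, after which $\mS_0$ has only boundedly many edges and Breaker trivially finishes (or there is a unique anchor, Maker takes it, and we recurse on a lower-dimensional non-abundant system, which terminates). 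If instead all $c_i \neq 0$, then the map $\bx' \mapsto f(\bx')$ is ``spreading'': for a fixed target value $t$ of $x_{j_0}$, the number of proper solutions with $x_{j_0} = t$ is at most the number of solutions to a single linear equation in $m-1$ variables over $[n]$, which is $O(n^{m-2})$, whereas the total number of edges is $\Theta(n^{m-1})$; hence the edges split into $\Theta(n)$ ``pencils'' indexed by the value of $x_{j_0}$. Breaker's strategy is then the obvious one: on each of his turns he occupies, for the pencil corresponding to each value $x_{j_0}$ that Maker could still be threatening, one free vertex — but this naive count does not obviously fit in bias $2$, so the honest route is different: since $x_{j_0}$ is determined by the other coordinates, Breaker pairs up with Maker by a pairing/potential argument on the single coordinate $j_0$. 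Actually the cleanest formulation: every edge $e$ contains the vertex $f$(other coords), so the hypergraph is ``$1$-degenerate'' in the sense that each edge has a vertex determined by the rest; Breaker uses the bias-$1$ Erd\H{o}s--Selfridge-type argument restricted to the determined coordinate, which needs only a constant bias, and a short computation with $\sum_{e} 2^{-|e|+1}$ over the surviving pencils shows bias $2$ suffices because each pencil has at most one ``free'' vertex after Maker's move in it. I would reconcile these two drafts into a single clean statement: Breaker treats coordinate $j_0$ as a Maker-Breaker subgame on $[n]$ where Maker must claim a value that is the $f$-image of an already-claimed proper tuple, and this reduced game is won by Breaker with bias $2$ by Beck's biased Erd\H{o}s--Selfridge criterion.

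The main obstacle I anticipate is the \emph{reduction step} — turning the rank statement ``$\rank{A^Q} < \rank{A}$ for some $|Q| \ge m-2$'' into a clean combinatorial picture of the solution set, in particular handling the degenerate sub-cases where the dependent column participates in a minimal dependency of size $2$ or $3$ (forcing some coordinate to take a bounded set of values, possibly with repetitions that would violate properness). Making the recursion ``pass to a lower-dimensional system'' rigorous — checking that the reduced matrix is again positive and that non-abundance or the bounded-edge conclusion is inherited — is where the bookkeeping lives; but since the dimension drops each time, it terminates, and at the base the bias is a constant, which then propagates up to the claimed bound of $2$ (the constant $2$, rather than some larger constant, comes from the fact that a single Erd\H{o}s--Selfridge application on a hypergraph in which each surviving edge retains exactly one uncovered vertex after the opponent's move is won with bias $2$). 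A secondary, more routine obstacle is verifying the edge count $\Theta(n^{m-1})$ versus $O(n^{m-2})$ per pencil, which follows from standard counting of integer points on affine subspaces intersected with $[n]^m$, using positivity to guarantee the lower bound $\Omega(n^{m-1})$ on the number of proper solutions.
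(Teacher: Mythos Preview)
Your reduction step fails, and with it the whole argument. You want to pass from ``deleting two columns drops the rank'' to ``deleting one column drops the rank'', but this implication is false. Take $A=(1,\,-1,\,0)\in\ZZ^{1\times 3}$: with $Q=\{3\}$ (so $|Q|=m-2=1$) we have $\rank{A^Q}=0<1=\rank{A}$, yet deleting any single column leaves a rank-$1$ matrix. More importantly, the one-column case is precisely the one that \emph{cannot} occur for a positive matrix: if removing column $j_0$ alone drops the rank, then after row operations some row is supported only on column $j_0$, so the homogeneous system forces $x_{j_0}=0$ and $A$ has no solution in $\NN^m$. Thus your entire framework --- $x_{j_0}=c_0+\sum_{i\neq j_0} c_i x_i$ with unknown coefficients, the two regimes, the pencils, the recursion --- is built on a vacuous case.

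The structure that non-abundance plus positivity actually gives is much sharper and is what the paper exploits: there are two \emph{distinct} columns $i_1,i_2$ whose joint removal drops the rank, hence (after row operations) a row supported on $\{i_1,i_2\}$, yielding a relation $v_1 x_{i_1}+v_2 x_{i_2}=b'$ with $v_1,v_2\in\ZZ\setminus\{0\}$ (nonvanishing because vanishing of either would again contradict positivity). Every edge of $\mS_0(A,\bb,n)$ must contain a pair of integers satisfying this single two-variable equation. Breaker then uses the obvious pairing strategy: when Maker occupies $i$, Breaker occupies $(b'-v_1 i)/v_2$ and $(b'-v_2 i)/v_1$ (whenever these lie in $[n]$). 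This needs bias at most $2$ and guarantees Maker can never own both $x_{i_1}$ and $x_{i_2}$ of any solution. No Erd\H{o}s--Selfridge potentials, no counting of solutions, no recursion on dimension are needed --- and note also that your asserted count $\Theta(n^{m-1})$ for proper solutions is not available here, since that lower bound in the paper relies on abundance.
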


\subsection{Hypergraph $\mG$--Games}

Given some graph $G$ on at least $3$ non-isolated vertices, one defines its \emph{$2$--density} as
\begin{equation*}
	m_2(G) = \max_{\substack{F \subseteq G \\ v(F) \geq 3  }}\frac{e(F)-1}{v(F)-2}.	
\end{equation*}
As previously mentioned, Bednarska and {\L}uczak~\cite{BL00} showed that the threshold bias in the Maker-Breaker game played on the edge set of $K_n$ where Maker tries to occupy a copy of $G$ satisfies $q({\cal B}_n(G)) = \Theta (n^{1/m_2(G)})$ where ${\cal B}_n(G)$ denotes the $e(G)$--uniform hypergraph of all copies of $G$ in $K_n$.

We consider the following generalization. Given some $r$--uniform hypergraph $\mG$ on at least $r+1$ non-isolated vertices, we define the \emph{$r$--density} of $\mG$ to be
\begin{equation*}
	m_r(\mG) = \max_{\substack{\mF \subseteq \mG \\ v(\mF) \geq r+1}} \frac{e(\mF)-1}{v(\mF)-r}.
\end{equation*}
Note that this is an obvious generalization of the $2$--density of a graph. Furthermore, we call $\mG$ \emph{strictly $r$--balanced}, if $m_r(\mG) > (e(\mF)-1)/(v(\mF)-r)$ for every subhypergraph $\mF$ of $\mG$ on at least $r+1$ vertices.

Let ${\cal B}_n(\mG)$ denote the $|{\cal G}|$--uniform hypergraph of all copies of $\mG$ in the complete $r$--uniform hypergraph $\mK_n^{(r)}$. Using the general winning criteria for Maker and Breaker, we generalize the result of Bednarska and {\L}uczak to the \emph{Maker-Breaker $\mG$--game on $\mK_n^{(r)}$}, that is the game in which Maker tries to occupy a copy of $\mG$ in $\mK_n^{(r)}$.
\begin{theorem} \label{thm:ThresholdHypergraphGames}
	For any integer $r \geq 2$ the following holds. If $\mG$ is an $r$--uniform hypergraph on at least $r+1$ non-isolated vertices, then the threshold bias of the Maker-Breaker $\mG$--game on $\mK_n^{(r)}$ satisfies $q(\mH(\mG,n)) = \Theta \left( n^{1/m_r(\mG)} \right)$.
\end{theorem}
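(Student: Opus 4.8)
The plan is to deduce both the lower and the upper bound from the general winning criteria (Theorem~\ref{cor:MakerWinCriterion} and Theorem~\ref{cor:BreakerWinCriterion}) applied to the sequence $\mH_n = {\cal B}_n(\mG)$, so the entire argument reduces to computing, up to constant factors, the degree parameters $\Delta_\ell(\mH_n)$ and the density $\density{\mH_n}$ and checking that they satisfy hypotheses (M1)--(M3) and the Breaker hypothesis. First I would set up the counting: the vertex set of $\mH_n$ is $E(\mK_n^{(r)})$, so $v(\mH_n) = \binom{n}{r} = \Theta(n^r)$, and the number of copies of $\mG$ in $\mK_n^{(r)}$ is $e(\mH_n) = \Theta(n^{v(\mG)})$ (choosing an ordered image of the $v(\mG)$ non-isolated vertices, divided by the finitely many automorphisms), hence $\density{\mH_n} = \Theta(n^{v(\mG) - r})$. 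For a set $S$ of $\ell$ hyperedges of $\mK_n^{(r)}$ spanning some $r$-uniform subhypergraph $\mF_0$, the degree $\deg(S)$ is nonzero only if $\mF_0 \subseteq \mG$, and then $\deg(S) = \Theta(n^{v(\mG) - v(\mF_0)})$; maximizing over all subhypergraphs $\mF_0 \subseteq \mG$ with exactly $\ell$ edges gives
\begin{equation*}
	\Delta_\ell(\mH_n) = \Theta\!\left( n^{\,v(\mG) - \min\{ v(\mF_0) \,:\, \mF_0 \subseteq \mG,\ e(\mF_0) = \ell \}} \right),
\end{equation*}
with the convention that vertices of $\mF_0$ are the ones incident to its edges.

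The second step is to translate the exponents into the $r$-density. For the Maker bound one computes, for each $2 \le \ell \le e(\mG)$,
\begin{equation*}
	\left( \frac{\density{\mH_n}}{\Delta_\ell(\mH_n)} \right)^{\frac{1}{\ell - 1}} = \Theta\!\left( n^{\frac{v(\mF_0) - r}{\ell - 1}} \right) = \Theta\!\left( n^{1 / \frac{\ell - 1}{v(\mF_0) - r}} \right),
\end{equation*}
where $\mF_0$ is the $\ell$-edge subhypergraph minimizing the vertex count; taking the minimum over $\ell$ on the left corresponds to maximizing $(e(\mF) - 1)/(v(\mF) - r)$ over all subhypergraphs $\mF \subseteq \mG$ with $v(\mF) \ge r+1$, which is exactly $m_r(\mG)$. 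So Theorem~\ref{cor:MakerWinCriterion} yields $q(\mH_n) = \Omega(n^{1/m_r(\mG)})$ once (M1)--(M3) are verified; (M1) is $\Delta_1 = O(\density{})$, and since $\Delta_1(\mH_n) = \Theta(n^{v(\mG) - r})$ (a single hyperedge of $\mK_n^{(r)}$ extends to $\Theta(n^{v(\mG)-r})$ copies of $\mG$, as $\mG$ has no isolated vertices so one edge already fixes $r$ vertices) this holds with room to spare; (M2) is $\Delta_2 = o(\density{})$, true because two edges span at least $r+1$ vertices; (M3) is $\density{} = o(v(\mH_n)^{r-1}) = o(n^{r(r-1)})$, true since $v(\mG) - r < r(r-1)$ for $r \ge 2$ (as $v(\mG) \le r \cdot e(\mG)$ but more simply $v(\mG) - r$ is a fixed constant while the right side grows). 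For the Breaker bound, the hypothesis $\Delta_\ell(\mH_n)^{1/(k-\ell)} v(\mH_n)^\epsilon = O(\Delta_1(\mH_n)^{1/(k-1)})$ with $k = e(\mG)$ becomes, after taking logarithms base $n$, the inequality
\begin{equation*}
	\frac{v(\mG) - v(\mF_0)}{e(\mG) - \ell} + r\epsilon \ \le \ \frac{v(\mG) - r}{e(\mG) - 1}
\end{equation*}
for all $2 \le \ell \le e(\mG) - 1$ and the worst choice of $\mF_0$; the strict version of this (with $\epsilon = 0$) is a standard rearrangement equivalent to $(e(\mF_0) - 1)/(v(\mF_0) - r) \le (e(\mG) - 1)/(v(\mG) - r)$, which holds by definition of $m_r(\mG)$, so the inequality holds with a strict gap and a small enough constant $\epsilon > 0$ can be absorbed. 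Then Theorem~\ref{cor:BreakerWinCriterion} gives $q(\mH_n) = O(\Delta_1(\mH_n)^{1/(e(\mG)-1)}) = O(n^{(v(\mG) - r)/(e(\mG)-1)})$, and since $m_r(\mG) \ge (e(\mG)-1)/(v(\mG)-r)$ (taking $\mF = \mG$ in the max) this is $O(n^{1/m_r(\mG)})$, matching the lower bound.

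The main obstacle I anticipate is not the high-level structure but the careful bookkeeping in the degree computation, in particular handling the subhypergraphs $\mF_0$ correctly: one must be precise that $\Delta_\ell$ is governed by the subhypergraph with $\ell$ edges having the fewest incident vertices (allowing $\mF_0$ to be disconnected), and that when $\mF_0$ is extended to a full copy of $\mG$ the remaining $v(\mG) - v(\mF_0)$ vertices are genuinely free to range over $\Theta(n)$ choices each. A secondary subtlety is that the Breaker criterion requires the inequality for \emph{all} $2 \le \ell \le e(\mG)-1$ simultaneously with a \emph{single} $\epsilon$, so one must observe that each of the finitely many inequalities is strict — equivalently that for every proper subhypergraph $\mF_0$ with at least one edge, $(e(\mF_0)-1)/(v(\mF_0)-r) < m_r(\mG)$ unless it achieves the max, and in the boundary case $\ell$ close to $e(\mG)$ one checks the inequality directly — and then take $\epsilon$ smaller than the minimum gap over this finite set. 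One also needs the easy fact that $v(\mF) - r > 0$ for every subhypergraph $\mF$ on at least $r+1$ vertices, so all the denominators are positive and $m_r(\mG)$ is well-defined and positive; this is immediate from the definition. No genuinely new ideas beyond those already encapsulated in the two general criteria should be required.
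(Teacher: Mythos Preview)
Your Breaker argument has a genuine gap when $\mG$ is not strictly $r$--balanced. You claim that the inequality $(e(\mF_0)-1)/(v(\mF_0)-r) \le (e(\mG)-1)/(v(\mG)-r)$ ``holds by definition of $m_r(\mG)$'', but the definition only gives $(e(\mF_0)-1)/(v(\mF_0)-r) \le m_r(\mG)$; these coincide only if $\mG$ itself realises the maximum. If some proper $\mF_0 \subsetneq \mG$ has $(e(\mF_0)-1)/(v(\mF_0)-r) > (e(\mG)-1)/(v(\mG)-r)$, then for $\ell = e(\mF_0)$ the required inequality $\frac{v(\mG)-v(\mF_0)}{e(\mG)-\ell} < \frac{v(\mG)-r}{e(\mG)-1}$ \emph{fails}, so the hypothesis of Theorem~\ref{cor:BreakerWinCriterion} is not satisfied for $\mH(\mG,n)$. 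Your final sentence compounds the problem: from $m_r(\mG) \ge (e(\mG)-1)/(v(\mG)-r)$ you deduce that $O(n^{(v(\mG)-r)/(e(\mG)-1)})$ implies $O(n^{1/m_r(\mG)})$, but the inequality goes the wrong way --- the exponent $(v(\mG)-r)/(e(\mG)-1)$ is at least $1/m_r(\mG)$, so even if the criterion applied you would only get an upper bound that is too large to match the lower bound. The fix, which the paper carries out, is to first replace $\mG$ by a strictly $r$--balanced subhypergraph $\mF$ with $m_r(\mF) = m_r(\mG)$; Breaker blocking every copy of $\mF$ a fortiori blocks every copy of $\mG$, and for the strictly balanced $\mF$ your verification goes through with a genuine strict gap, yielding $O(n^{1/m_r(\mF)}) = O(n^{1/m_r(\mG)})$.

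A secondary issue: in (M3) you wrote $v(\mH_n)^{r-1}$, but the uniformity of $\mH_n$ is $k = e(\mG)$, not $r$, so the condition is $\density{\mH_n} = o\big(v(\mH_n)^{e(\mG)-1}\big) = o\big(n^{r(e(\mG)-1)}\big)$. The needed inequality is then $v(\mG) - r < r(e(\mG)-1)$, i.e.\ $v(\mG) < r\,e(\mG)$, which fails precisely when $\mG$ is a matching of independent edges; that degenerate case must be handled separately (as the paper does with a one-line ad hoc Maker argument). Your claim ``$v(\mG) - r < r(r-1)$'' is simply false for large $\mG$, and the parenthetical ``$v(\mG)-r$ is a fixed constant while the right side grows'' compares two quantities that are \emph{both} constants depending only on $\mG$ and $r$.
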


\medskip

\noindent {\bf Outline.} In the remainder of this paper we will first formulate a stronger combinatorial version of Theorem~\ref{cor:MakerWinCriterion}, the general criterion for Maker, and then provide a proof based on a probabilistic result of Janson, {\L}uczak and Ruci\'nski in \emph{Section~\ref{sec:MakerProof}}. This will be followed by a combinatorial formulation of Theorem~\ref{cor:BreakerWinCriterion}, the general criterion for Breaker, which will be proven through a combination of several smaller strategies aimed at avoiding clustering of solutions in \emph{Section~\ref{sec:BreakerProof}}. Subsequently we will prove the two applications Theorem~\ref{thm:ThresholdGeneralizedVdWGames-Proper} and Theorem~\ref{thm:ThresholdHypergraphGames}, our generalization of Beck's van der Waerden games as well as the fixed hypergraph games, through applications of the two general criteria in \emph{Sections~\ref{sec:vdWGames} and~\ref{sec:HypergraphGames}} respectively. Section~\ref{sec:vdWGames} will also include several common minor results regarding the counting of solutions  as well as the proof to Proposition~\ref{prop:ThresholdEasyGeneralizedVdWGames} and a discussion of the effect that solutions with repeated components of solutions have on the game. Lastly, \emph{Section~\ref{sec:conclusion}} will contain some remarks and open questions as well as a reflection on the strong connection between these results and some well-known recent probabilistic statements in extremal combinatorics.

\section{Proof of Theorem~\ref{cor:MakerWinCriterion} -- Maker's Strategy} \label{sec:MakerProof}

We start by stating a strengthening of
Theorem~\ref{cor:MakerWinCriterion} that we will actually prove. In order to do so, we introduce the function
\begin{equation*}
	f(\mH) = \min_{2 \leq \ell \leq k} \left( \frac{\density{\mH}}{\Delta_{\ell}(\mH)} \right)^{\frac{1}{\ell-1}}
\end{equation*}
for any given $k$--uniform hypergraph $\mH$ and note that $1/f(\mH) = \max_{2 \leq \ell \leq k} \left( \Delta_{\ell}(\mH) / \density{\mH} \right)^{1/(\ell-1)}$. The combinatorial winning criterion for Maker now can be stated as follows. We will see how to derive Theorem~\ref{cor:MakerWinCriterion} from it immediately afterwards.
\begin{theorem}[Maker Win Criterion]\label{thm:MakerWinCriterion}
	For every $k \geq 2$ and every positive $c_1 \geq k$ there
        exists $c = c(k,c_1) > 0$ and $\bar{c} = \bar{c}(k,c_1) >
        0$ such that the following holds. If $\mH$ is a $k$--uniform
        hypergraph satisfying
	\begin{align*}
		 \text{(Mi)} \enspace \Delta_1(\mH)\leq c_1 \,
          \density{\mH}, \qquad  \text{(Mii)} \enspace f(\mH) > 1,
          \qquad \text{(Miii)} \enspace \frac{v(\mH)}{f(\mH)} \left(1 - \frac{1}{f(\mH)} \right) \geq \bar{c}
	\end{align*}
	then Maker has a winning strategy in $\bG(\mH;q)$ provided
	\begin{equation} \label{eq:MakerBiasBound}
		q  \leq c \, f(\mH) - 1.
	\end{equation}
\end{theorem}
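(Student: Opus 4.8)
The plan is to follow the random-strategy approach of Bednarska and \L uczak~\cite{BL00}: Maker ignores Breaker entirely and simply picks, in each round, a uniformly random free vertex of $\mH$. The key point is that after sufficiently many rounds Maker's set $M$ stochastically dominates a random set $\mR$ obtained by including each vertex of $V(\mH)$ independently with probability roughly $p = c'/f(\mH)$ for a suitable small constant $c'$ (depending on $k$ and $c_1$), because $q \leq c\, f(\mH) - 1$ forces the fraction of vertices Maker acquires before Breaker can block them to be of order $1/f(\mH)$. It therefore suffices to show that such a binomial random subset $\mH[\mR] = \mH_p$ contains an edge with probability bounded away from $0$ (in fact tending to $1$); then with positive probability Maker wins, which gives $q(\mH) \geq c\, f(\mH) - 1$ and hence the claimed $\Omega$ bound once Theorem~\ref{cor:MakerWinCriterion} is derived by checking that (M1)--(M3) imply (Mi)--(Miii) for the normalised density.

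The heart of the argument is the probabilistic lemma that $\mH_p$ contains an edge w.h.p. For this I would invoke the Janson--\L uczak--Ruci\'nski machinery (as the outline promises), specifically Janson's inequality together with the deletion/second-moment bound controlling the lower tail of the number of edges present. Write $X = \sum_{e \in \mH} \Ind{e \subseteq \mR}$, so $\EE{X} = p^k\, e(\mH) = p^k\, v(\mH)\, \density{\mH}$; condition (Miii) with $p \asymp 1/f(\mH)$ ensures $\EE{X} \to \infty$ since $p^k e(\mH) \geq (c'/f(\mH))^k v(\mH) \density{\mH}$ and $f(\mH)^{k} \le f(\mH) \cdot v(\mH)/\bar c$-type manipulations bound this below. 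Janson's inequality gives $\PP{X = 0} \leq \exp(-\EE{X} + \Delta/2)$ where $\Delta = \sum_{e \cap e' \neq \emptyset} p^{|e \cup e'|}$, and the crux is bounding $\Delta$. Splitting $\Delta$ according to $\ell = |e \cap e'| \in \{1, \dots, k-1\}$ and using $\sum_{e' : |e \cap e'| = \ell} 1 \leq \binom{k}{\ell} \Delta_\ell(\mH)$ for each fixed $e$, one gets a term of order $e(\mH)\, \Delta_\ell(\mH)\, p^{2k - \ell}$. The definition of $f$ is engineered precisely so that $\Delta_\ell(\mH)/\density{\mH} \leq f(\mH)^{-(\ell-1)}$, i.e. $\Delta_\ell(\mH) \leq \density{\mH}\, p^{\ell - 1} (c')^{-(\ell-1)}$, which makes each such term $O\big((c')^{\,k - \ell + 1} \EE{X}\big)$; choosing $c'$ small makes $\Delta \leq \EE{X}$, so $\PP{X = 0} \leq \exp(-\EE{X}/2) \to 0$. (One also needs $\EE{X^2} = O(\EE{X}^2)$ if using the pure second-moment version, which follows from the same $\Delta$-bound plus (Mi); condition (M3), i.e. $\density{\mH} = o(v(\mH)^{k-1})$, guarantees $p < 1$ and rules out the degenerate regime where edges saturate the whole vertex set.)

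The remaining, more game-theoretic, step is the domination claim: that Maker's random set really does contain (a copy of, or is at least as large in the right distributional sense as) $\mR = \mR_p$. Here one argues as in~\cite{BL00}: run the game for $N = p\, v(\mH)/(1 - p)$-ish rounds; since Breaker removes at most $q \leq c f(\mH) - 1$ vertices per round, the number of free vertices stays $\Theta(v(\mH))$ throughout, so in each round Maker picks a uniform free vertex and a standard coupling shows the resulting set contains a uniformly random subset of the appropriate size, which in turn contains a binomial $\mR_{p'}$ for $p'$ a constant fraction of $p$. Condition (Miii) is exactly what makes the number of rounds (and hence $\EE{X}$ along the way) large enough for the probabilistic lemma to apply. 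I expect the main obstacle to be making this coupling fully rigorous while tracking constants — ensuring that "Breaker's interference is negligible" is quantified correctly so that the effective inclusion probability is genuinely $\Theta(1/f(\mH))$ and not something smaller, and handling the fact that Maker's set is not independent across vertices (it is a random set of fixed size conditioned on avoiding Breaker's moves). Absorbing all the $k$- and $c_1$-dependent constants into the final $c = c(k,c_1)$ and $\bar c = \bar c(k, c_1)$, and finally deducing Theorem~\ref{cor:MakerWinCriterion} by noting that (M1) gives (Mi), (M2)+(M3) give (Mii) for large $n$, and $v(\mH_n) \to \infty$ with $f(\mH_n) = o(v(\mH_n))$ gives (Miii), completes the argument.
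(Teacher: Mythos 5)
Your overall architecture (uniformly random Maker strategy, reduction to a probabilistic statement about random subsets, Janson--\L uczak--Ruci\'nski to bound $\PP{X=0}$ using the $\Delta_\ell$'s and the definition of $f$) matches the paper, and your $\Delta$-computation is essentially the one carried out there. But there is a genuine gap at the central reduction: it does \emph{not} suffice to show that the binomial random set $V(\mH)_p$ contains an edge with high probability. The set Maker actually \emph{occupies} is not (and cannot be coupled below) a uniformly random or binomial subset of $V(\mH)$: Breaker plays adaptively, so which of Maker's random picks become ``failures'' (already taken by Breaker) is chosen adversarially as a function of Maker's past picks. Breaker can in principle steer his $q$ moves per round so as to spoil precisely those picked vertices that matter for completing an edge. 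Your proposed coupling (``the resulting set contains a uniformly random subset of the appropriate size, which in turn contains a binomial $\mR_{p'}$'') is exactly the step that fails; the paper explicitly flags this in the introduction as the reason the naive probabilistic intuition breaks down. You sense the difficulty in your last paragraph but do not resolve it.

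The missing idea is \emph{resilience}: one makes the \emph{picked} set genuinely uniform (Maker picks uniformly among all vertices he has not yet picked, regardless of whether they are free, and only a $\geq\delta$ fraction of picks succeed, by a Markov bound), and then proves the much stronger statement that $V(\mH)_M$ is $\delta$-\emph{stable} with high probability --- every subset of size $\geq \delta M$ contains an edge --- so that \emph{whichever} $(1-\delta)$-fraction Breaker manages to spoil, the surviving occupied vertices still span an edge. This is Theorem~\ref{thm:decay} in the paper; it follows from the Janson bound you derived combined with a union bound over all $\binom{v(\mH)}{\delta M}\binom{v(\mH)-\delta M}{(1-\delta)M}$ choices of a bad pair $(T,T')$, absorbed by choosing $\delta$ close enough to $1$ that $(1-\delta)(1-\ln(1-\delta)) < c'/4$. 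Without this step (or an equivalent one) the argument does not go through; with it, your proof becomes the paper's.
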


We start by proving that Theorem~\ref{cor:MakerWinCriterion} is a consequence of this result.

\begin{proof}[Proof of Theorem~\ref{cor:MakerWinCriterion} from Theorem~\ref{thm:MakerWinCriterion}]
	We see that \emph{(M1)} immediately implies Condition~\emph{(Mi)} for $n$ large enough. Now \emph{(M2)} implies $\density{\mH_n} / \Delta_2 (\mH_n) \to \infty$. As $\Delta_{\ell} (\mH_n) \leq \Delta_2 (\mH_n)$ for $3 \leq \ell \leq k$ this gives us $f(\mH_n) \to \infty$, implying Condition~\emph{(Mii)} for $n$ large enough. As $f(\mH_n) \to \infty$ we also know that $1 - 1/f(\mH_n) \to 1$. Now by definition of $f$ we have $v(\mH_n) / f(\mH_n) \geq v(\mH_n) / d(\mH_n)^{1/(k-1)}$ which by \emph{(M3)} goes to infinity. This gives us Condition~\emph{(Miii)} for $n$ large enough and the desired result follows.
\end{proof}

The following notion plays a crucial role in the proof of Theorem~\ref{thm:MakerWinCriterion} and is a natural generalization of the notion that a set is $(\delta,k)$--\emph{Szem\'eredi} as defined by Conlon and Gowers~\cite{CG10}.
\begin{definition}[$\delta$--stable]
Let $\mF$ be a hypergraph and  $0<\delta<1$. We say that a subset $T
\subseteq V(\mF)$ of the vertices is $\delta$\emph{-stable} if every subset of $S \subseteq T$ of size $|S| \geq \delta|T|$ contains an edge of $\mF$.
\end{definition}
%
	Equivalently, $T$ is called $\delta$--stable, if the subhypergraph of $\mF$ induced by $T$ has independence number less than $\delta \, |T|$.
%

Maker's strategy will consist of \emph{picking} (but not necessarily occupying) elements uniformly at random from among all elements he has not previously picked. With this rule it is guaranteed that the set of elements Maker picks is uniformly random. Then, if possible, Maker occupies that picked element in the game, otherwise he occupies an arbitrary free element. We will prove that with positive probability Maker wins using this strategy by showing that a $\delta$--fraction of the elements Maker \emph{picked} he was also able to \emph{occupy}. If we ensure that the set of vertices occupied by Maker is  $\delta$--stable, it then follows that Maker's set of vertices contain an edge with positive probability.

\medskip

Given some finite set $S$ and $0 < p < 1$ we will use the notation $S_p$ to refer to the \emph{binomial random set} that is obtained by picking each element of $S$ independently with probability $p$. For any given subset $T \subset S$ we therefore have $\mathbb{P}(S_p = T) = p^{|T|} (1-p)^{|S| - |T|}$. On the other hand, given $0 \leq M \leq n$ the \emph{uniform random set} $S_M$ is obtained by assigning each subset $T \subset S$  of size $|T| = M$ the same probability $\mathbb{P}(S_M = T) = 1/ \binom{n}{M}$.

The key ingredient to prove the existence of a winning strategy for Maker is the following statement that says that for $\mH$ as in Theorem~\ref{thm:MakerWinCriterion}, $V(\mH)_M$ is $\delta$--stable for suitable $M$ and $\delta$.
\begin{theorem}\label{thm:decay}
	For every $k \geq 2 $ and for every constant $c_1 \geq k$
        there exist  constants $\delta = \delta (k,c_1) < 1$  and $\tilde{c} = \tilde{c}(k,c_1) >0$ such that the following holds. If $\mH$ is a $k$--uniform hypergraph satisfying
	\begin{align*}
		 \text{(Mi)} \enspace \Delta_1(\mH)\leq c_1 \,
          \density{\mH}, \qquad  \text{(Mii)} \enspace f(\mH) > 1,
          \qquad \text{(Miii)} \enspace \frac{v(\mH)}{f(\mH)} \left(1 - \frac{1}{f(\mH)} \right) \geq \tilde{c}
	\end{align*}
	then
	\begin{equation*}
		\PP{V(\mH)_{M} \text{ is not } \delta \text{-stable}} < 3 \exp\left(-M \, \frac{1}{c_1 \; 2^{k+2}} \right) ,
	\end{equation*}
	for every $M  \geq  2 \, \floor{ v(\mH) / f(\mH)}$.	
\end{theorem}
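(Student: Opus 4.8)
The plan is to run the Bednarska--{\L}uczak scheme. First reformulate: for $T\subseteq V(\mH)$, a subset $S\subseteq T$ is independent in the induced subhypergraph $\mH[T]$ exactly when it contains no edge of $\mH$, so $V(\mH)_M$ \emph{fails} to be $\delta$-stable precisely when $\alpha(\mH[V(\mH)_M])\geq\delta M$, i.e.\ when the transversal number satisfies $\tau(\mH[V(\mH)_M])\leq(1-\delta)M$. Since $\alpha(\mH[V(\mH)_M])\leq\alpha(\mH)=v(\mH)-\tau(\mH)$ and, by~(Mi), $\tau(\mH)\geq e(\mH)/\Delta_1(\mH)\geq v(\mH)/c_1$, the bad event is empty once $M>v(\mH)(1-1/c_1)/\delta$; so we may restrict to $2\floor{v(\mH)/f(\mH)}\leq M\leq v(\mH)$, and in particular $p:=M/v(\mH)$ satisfies $f(\mH)^{-1}\leq p\leq 1$.

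The core is a lower-tail estimate for the edge count $X:=e(\mH[V(\mH)_M])$, and this is where the inequality of Janson, {\L}uczak and Ruci\'nski enters. Passing from the uniform model to the binomial model $V(\mH)_p$, one has $\EE{X}=e(\mH)\,p^k\,(1+o(1))$, of order $\density{\mH}\,v(\mH)\,p^k$, and Janson's inequality gives $\PP{X\leq\tfrac12\EE{X}}\leq\exp\!\big(-\EE{X}^2/(8\overline\Delta)\big)$, where $\overline\Delta=\sum p^{|e\cup e'|}$ runs over ordered pairs $(e,e')$ of edges of $\mH$ with $e\cap e'\neq\emptyset$ (including $e=e'$). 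Grouping the pairs by the overlap $j=|e\cap e'|$ and bounding their number by $e(\mH)\binom{k}{j}\Delta_j(\mH)$, then using $\Delta_1(\mH)\leq c_1\density{\mH}$ from~(Mi), the bound $\Delta_j(\mH)\leq\density{\mH}\,f(\mH)^{-(j-1)}$ for $2\leq j\leq k-1$ (immediate from the definition of $f$, together with $\Delta_j\leq\Delta_1$ and $\Delta_1\geq\density{\mH}$), and $f(\mH)^{-1}\leq p$ to replace each $f(\mH)^{-(j-1)}$ by $p^{j-1}$ (and $f(\mH)^{k-1}\leq\density{\mH}$ to absorb the diagonal term), one obtains $\overline\Delta\leq 2^{k}\,c_1\,\density{\mH}\,p^{k-1}\,\EE{X}$, whence $\EE{X}^2/(8\overline\Delta)$ is of order $v(\mH)\,p/(c_1\,2^{k})$; after tidying the numerical constants this is the exponent $M/(c_1\,2^{k+2})$ in the statement. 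Condition~(Miii), with $\tilde{c}$ chosen large as a function of $k$ and $c_1$, is what makes $\EE{X}$ (and the individual degree means below) large enough for this Janson estimate, and for the routine transfer between the binomial and uniform random-set models, to be valid.

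It remains to deduce $\alpha(\mH[V(\mH)_M])<\delta M$ from $X\geq\tfrac12\EE{X}$, and — following~\cite{BL00} — this, rather than the concentration of $X$, is where the real work lies. The route via $\tau(\mH[V(\mH)_M])\geq X/\Delta_1(\mH[V(\mH)_M])$ succeeds once one knows the induced maximum degree drops like $p^{k-1}$, i.e.\ $\Delta_1(\mH[V(\mH)_M])\leq 2\Delta_1(\mH)\,p^{k-1}\leq 2c_1\density{\mH}\,p^{k-1}$: then $\tau(\mH[V(\mH)_M])\geq \tfrac12\EE{X}/(2c_1\density{\mH}p^{k-1})$ is of order $M/c_1$, which exceeds $(1-\delta)M$ as soon as $\delta>1-1/(4c_1)$ — yielding $\delta=\delta(k,c_1)<1$ as claimed. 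The obstacle is proving the induced-degree bound uniformly: for each vertex $x$, $\deg_{\mH[V(\mH)_M]}(x)$ has mean $\deg_{\mH}(x)\,p^{k-1}$, and the codegree conditions~(Mii) put the relevant indicator sums into the Poisson regime of Janson's inequality, giving exponential upper tails; but when these means are only a constant the maximum over $\leq v(\mH)$ vertices can still be as large as $\Theta(\log v(\mH))$, so one must argue separately that such atypically high induced degrees do not spoil the transversal estimate (for instance by absorbing them into a slightly smaller $\delta$, or by treating the bounded-average-degree regime of $\mH[V(\mH)_M]$ directly).

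Putting the two pieces together — the Janson lower tail for $X$, which produces the stated probability bound, and the combinatorial conversion above — and recalling that large $M$ is vacuous, completes the proof. I expect the uniform control of the induced maximum degree in the conversion step to be the main difficulty; everything else is a direct application of Janson's inequality and elementary estimates for binomial coefficients and for the comparison of the uniform and binomial random-set models.
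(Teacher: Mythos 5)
Your first two ingredients (the Janson--{\L}uczak--Ruci\'nski estimate with $p=1/f(\mH)$, whose computation with conditions (Mi)--(Mii) gives exactly the exponent $v(\mH)p/(c_1 2^{k})$, and the binomial-to-uniform transfer) match the paper. The genuine gap is in your conversion from edge-count concentration to $\delta$-stability. You propose $\tau(\mH[V(\mH)_M])\geq X/\Delta_1(\mH[V(\mH)_M])$ together with the uniform bound $\Delta_1(\mH[V(\mH)_M])=O(\density{\mH}\,p^{k-1})$, and you flag the latter as a difficulty but do not resolve it. It is not a technicality: in the regime that the applications actually live in, $p=\Theta(1/f(\mH))$ with $f(\mH)=\Theta(\density{\mH}^{1/(k-1)})$ (e.g.\ the $k$--AP hypergraph), so $\density{\mH}\,p^{k-1}=\Theta(1)$; the induced degrees then have constant mean, their maximum over the $v(\mH)$ vertices is $\Theta(\log v(\mH)/\log\log v(\mH))$ with high probability, and the event that \emph{some} vertex has induced degree exceeding any fixed constant has probability close to $1$, not $\exp(-\Omega(M))$. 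Your transversal bound therefore only gives $\tau=\Omega(M\log\log v(\mH)/\log v(\mH))$, which does not beat $(1-\delta)M$ for any fixed $\delta<1$, and the stated exponential probability bound cannot be recovered without a further deletion/absorption argument that your proposal does not carry out.

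The paper avoids the issue by never concentrating $X$ or controlling induced degrees. It applies the Janson bound only to the event $X=0$, obtaining $\PP{V(\mH)_{\bar M}\text{ contains no edge}}\leq 3\exp(-c'\bar M)$ for every $\bar M\geq\floor{v(\mH)/f(\mH)}$ with $c'=1/(c_1 2^k)$, and then union-bounds over the edge-free subsets $T'$ of size $\delta M$ inside the random $M$-set: the number of pairs $(T,T')$ is at most $3\exp(-c'\delta M)\binom{v(\mH)}{\delta M}\binom{v(\mH)-\delta M}{(1-\delta)M}=3\exp(-c'\delta M)\binom{M}{(1-\delta)M}\binom{v(\mH)}{M}$, and choosing $\delta$ close enough to $1$ that $(1-\delta)(1-\ln(1-\delta))<c'/4$ makes the entropy factor $\binom{M}{(1-\delta)M}$ negligible against the exponential decay. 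This is the step your argument is missing a substitute for; as written, your proof is incomplete.
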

We start by showing how to deduce Theorem~\ref{thm:MakerWinCriterion} assuming the statement of Theorem~\ref{thm:decay}. The proof of Theorem~\ref{thm:decay} will be given immediately afterwards.

\subsection{Proof of Theorem~\ref{thm:MakerWinCriterion} from Theorem~\ref{thm:decay}}

Fix an arbitrary strategy $S_B$ for Breaker. Maker will play according to the following random strategy: in each round he \emph{picks} an element uniformly at random from among all elements of $V(\mH)$ that he has not previously picked. If this element was not already occupied by either Maker or Breaker, then Maker occupies it. Otherwise he occupies an arbitrary free vertex and ‘forgets’ about it for the rest of the game, i.e. he doesn't consider it as picked and can potentially pick it at a later point. Note the subtle difference between picking and occupying a vertex: occupying is the act of actually choosing a vertex in the process of the game and picking can, depending on whether the vertex was already occupied by either player, be merely in the mind of Maker. We label an element picked by Maker as a \emph{failure}, if that element was already occupied by Breaker. We will show that this random strategy succeeds with positive probability against $S_B$, implying that $S_B$ is not a winning strategy. Since $S_B$ was arbitrary, this shows that Maker must have a winning strategy.

\noindent Let $\delta = \delta(k,c_1)<1$ be chosen according to Theorem~\ref{thm:decay} and define $c = (1-\delta)/4 >0$ to be a  constant. Let $q \leq c f(\mH) -1 $ and consider the first
\begin{equation} \label{eq:limitrounds}
	M = 2 \; \bigfloor{\frac{ v(\mH)}{ f(\mH)}} \leq \frac{1-\delta}{2} \;
 \frac{v(\mH)}{q+1}
\end{equation}
rounds of the game. We may consider the set of elements that Maker picked in these $M$ rounds as the uniform random set $V(\mH)_{M}$. Note that some of his elements may be failures. We will now upper bound the probability that Maker's $i$--th move, which we refer to as $m_i$, was a failure. Clearly this probability is upper bounded by the probability that his $M$--th move is a failure since in every round the number of potential failures does not decrease and the number of vertices  Maker picks from strictly decreases. Note that in the first $M-1$ rounds, Maker picked exactly $M-1$ vertices. So, in round $M$, there are $v(\mH)-M+1$ available vertices to pick from. The potential failures are among the vertices occupied by Breaker and hence their number is at most $q \, (M-1)$.  Using~\eqref{eq:limitrounds} it follows for every $i \in [M]$ that
\begin{equation*}
	\PP{m_i \text{ a failure}} \leq \PP{m_M \text{ a failure}} \leq \frac{q \, (M-1)}{v(\mH)-(M-1)} \leq  \frac{q \; M}{v(\mH)-M} \leq \frac{1-\delta}{2}.
\end{equation*}
The probability that Maker has more than $(1-\delta) \, M$ failures is now at most the probability that among $M$ independent Bernoulli trials with failure probability $(1-\delta)/2$ there exist  more than $(1-\delta) \, M$ failures, which is less than $1/2$ by Markov's inequality. In other words, with probability at least $1/2$, at least $\delta \, M$ elements picked by Maker are not failures.

By Theorem~\ref{thm:decay} we have $\PP{V(\mH)_{M} \text{ is not } \delta \text{-stable}} < 3 \exp\left(-M(c_1 \; 2^{k+2})^{-1} \right)$. Now setting $\bar{c}$ to be the maximum of $ c_1 2^{k+1}(\log(3)+ \log(4))+1$ and the according value of $\tilde{c}$ in Theorem~\ref{thm:decay}, and using that $f(\mH) \leq \bar{c}^{-1}v(\mH)$  by Condition~(Miii), one can verify that the probability that the uniform random set $V(\mH)_M$ is not $\delta$--stable is at most $1/4$. Consequently, with probability at least $1/4$, the vertices occupied by Maker (of which there are at least $\delta M$) contain an edge of $\mH$. \hfill $\square$


\subsection{Proof of Theorem~\ref{thm:decay}}

The heart of the proof of Theorem~\ref{thm:decay} is the following statement due to Janson, {\L}uczak and Ruci\'nski \cite{JTR90-paper}. We use here the version stated in Theorem $2.18$, \emph{(ii)} in~\cite{JLR00}.
\begin{theorem}[Janson-{\L}uczak-Ruci\'nski~\cite{JTR90-paper}]\label{thm:JLR}
	Let $S$ be a set and let $0 < p < 1$.  For a set $T \subseteq S$ denote by $\Ind{T}$  the indicator random variable for the event that $T \subseteq S_p$. Let $\mS \subset \mP(S)$ be a family of subsets of $S$ and let $X = \sum\limits_{T \in \mS} \Ind{T}$. Then
	\begin{equation*}
		\PP{X = 0} \leq \exp\Bigg(-\frac{\EE{X}^2}{\underset{\substack{(T,T') \in \mS^2 \\ T \cap T’ \neq \emptyset}}{ \sum} \!\!\! \EE{\Ind{T} \Ind{T'}}}\Bigg).
	\end{equation*}
\end{theorem}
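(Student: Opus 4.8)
The plan is to recover this as the ``extended Janson inequality,'' obtaining it from two classical ingredients: a correlation-inequality argument yielding the basic Janson bound $\PP{X = 0} \le \exp(-\EE{X} + \Delta/2)$, followed by a random-subfamily trick upgrading this to the quadratic form stated. Throughout, write $\mS = \{T_1, \dots, T_N\}$ in an arbitrary fixed order, let $A_i$ be the increasing event $\{T_i \subseteq S_p\}$ on the product space $\{0,1\}^S$ (so $\Ind{T_i} = \Ind{A_i}$ and $\EE{\Ind{T_i}} = p^{|T_i|}$), and set $\Delta = \sum_{(T,T') \in \mS^2 :\ T \neq T',\ T \cap T' \neq \emptyset} \EE{\Ind{T}\Ind{T'}}$, so the denominator in the statement equals $\EE{X} + \Delta$. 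Since $\{X = 0\} = \bigcap_{i=1}^N \overline{A_i}$, the chain rule gives $\PP{X = 0} = \prod_{i=1}^N \PP{\overline{A_i} \mid \overline{A_1} \cap \dots \cap \overline{A_{i-1}}}$, and applying $1 - x \le e^{-x}$ factor-by-factor reduces the basic bound to the estimate $\sum_{i=1}^N \PP{A_i \mid \overline{A_1} \cap \dots \cap \overline{A_{i-1}}} \ge \EE{X} - \Delta/2$.

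To lower-bound the $i$-th conditional probability I would split $\{1, \dots, i-1\}$ into $D_i = \{j : T_j \cap T_i \neq \emptyset\}$ and its complement $E_i$, and set $B_i = \bigcap_{j \in E_i} \overline{A_j}$ and $C_i = \bigcap_{j \in D_i} \overline{A_j}$, so the conditioning event is $B_i \cap C_i$. As $A_i$ depends only on the coordinates indexed by $T_i$ -- disjoint from every $T_j$ with $j \in E_i$ -- it is independent of $B_i$, so $\PP{A_i \mid B_i} = p^{|T_i|}$; and Harris's inequality gives $\PP{A_i \cap A_j \mid B_i} \le \PP{A_i \cap A_j} = \EE{\Ind{T_i}\Ind{T_j}}$ for $j \in D_i$, since $A_i \cap A_j$ is increasing and $B_i$ is decreasing. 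Combining these with a union bound over $D_i$,
\begin{align*}
	\PP{A_i \mid \overline{A_1} \cap \dots \cap \overline{A_{i-1}}}
	&= \frac{\PP{A_i \cap C_i \mid B_i}}{\PP{C_i \mid B_i}}
	\geq \PP{A_i \cap C_i \mid B_i} \\
	&\geq \PP{A_i \mid B_i} - \sum_{j \in D_i} \PP{A_i \cap A_j \mid B_i}
	\geq \EE{\Ind{T_i}} - \sum_{j \in D_i} \EE{\Ind{T_i}\Ind{T_j}},
\end{align*}
and summing over $i$ -- the double sum of pair terms telescoping to $\Delta/2$ -- yields $\PP{X = 0} \le \exp(-\EE{X} + \Delta/2)$.

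For the quadratic refinement, apply the basic bound not to $\mS$ but to a random subfamily $\mS_q$ containing each $T \in \mS$ independently with probability $q \in [0,1]$. Since $\{X = 0\}$ forces $\sum_{T \in \mS_q} \Ind{T} = 0$, the basic bound gives $\PP{X = 0} \le \exp\big(-\mu_{\mS_q} + \Delta_{\mS_q}/2\big)$ for every realisation of $\mS_q$, where $\mu_{\mS_q}$ and $\Delta_{\mS_q}$ are the obvious analogues of $\EE{X}$ and $\Delta$ for $\mS_q$; since $\mathbb{E}[\mu_{\mS_q}] = q\,\EE{X}$ and $\mathbb{E}[\Delta_{\mS_q}] = q^2 \Delta$, choosing a realisation no worse than the mean gives $\PP{X = 0} \le \exp\big(-q\,\EE{X} + q^2 \Delta/2\big)$ for all $q \in [0,1]$. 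Optimising this exponent over $q$ -- with $q = 1$ when $\EE{X} \ge \Delta$ and $q = \EE{X}/\Delta$ otherwise -- produces a quadratic bound of the shape $\exp\big(-c\,\EE{X}^2 / (\EE{X} + \Delta)\big)$; extracting the sharp constant $c = 1$ asserted here requires the more careful accounting carried out in~\cite{JLR00}.

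I expect the main obstacle to be the conditioning step of the second paragraph. The delicate points are that one must peel off the ``independent'' conditioning $B_i$ before the ``dependent'' conditioning $C_i$ -- only then does $\PP{A_i \mid B_i}$ equal $p^{|T_i|}$ exactly -- and that the Harris/FKG correlation inequality has to be used in the correct direction (conditioning an increasing event on a decreasing one can only lower its probability). Once the per-step estimate is in hand, what remains is telescoping bookkeeping and a one-variable optimisation; the subfamily-sampling device of the third paragraph is the one genuinely clever ingredient, though by now entirely standard.
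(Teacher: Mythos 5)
This statement is quoted from the literature (Theorem 2.18(ii) of [JLR00], originally due to Janson); the paper offers no proof of it, so there is nothing internal to compare against. Judged on its own terms, your two-stage argument (Boppana--Spencer correlation bound, then random sparsification) is carried out correctly: the conditioning decomposition, the independence of $A_i$ from $B_i$, and the direction of Harris's inequality are all right, and the telescoping does give $\PP{X=0}\leq \exp(-\EE{X}+\Delta/2)$ and then $\PP{X=0}\leq\exp(-q\,\EE{X}+q^2\Delta/2)$ for all $q\in[0,1]$.

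The gap is the one you half-concede in your last paragraph, and it is not just ``careful accounting'': the route you chose provably cannot reach the stated constant. Optimising $q\mapsto q\,\EE{X}-q^2\Delta/2$ gives exponent $\EE{X}-\Delta/2$ when $\Delta\leq\EE{X}$ (which does dominate $\EE{X}^2/(\EE{X}+\Delta)$, since $(1-t/2)(1+t)\geq 1$ for $t=\Delta/\EE{X}\in[0,1]$), but in the complementary case $\Delta>\EE{X}$ the optimum is $\EE{X}^2/(2\Delta)$, which is \emph{strictly smaller} than $\EE{X}^2/(\EE{X}+\Delta)$ whenever $\Delta>\EE{X}$. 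So sparsification yields the inequality only with an extra factor $1/2$ in the exponent; no choice of $q$ closes this. The sharp form in the statement is proved by a genuinely different mechanism: one studies $\Psi(s)=-\ln\EE{e^{-sX}}$, shows via FKG and Jensen that $\Psi'(s)\geq \EE{X}\exp(-s\,\widebar{\Delta}/\EE{X})$ with $\widebar{\Delta}=\sum_{T\cap T'\neq\emptyset}\EE{\Ind{T}\Ind{T'}}$, and integrates over $s\in(0,\infty)$ to get $-\ln\PP{X=0}\geq\EE{X}^2/\widebar{\Delta}$. That said, for every use of Theorem~\ref{thm:JLR} in this paper only the order of the exponent matters, so the weaker bound $\exp\big(-\EE{X}^2/(2\,\widebar{\Delta})\big)$ that your argument does deliver would suffice for Theorem~\ref{thm:decay} after adjusting the constant $c'$.
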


We want to apply Theorem~\ref{thm:JLR} with $S = V(\mH)$, $\mS = E(\mH)$ and $p  = 1/f(\mH)$. Note that $p < 1$ due to Condition~(Mii). For $X = \sum_{e \in \mH} \Ind{e}$ we have, by linearity of expectation, that $\EE{X} = e(\mH) \; p^k$. Note that $\EE{\Ind{e}\Ind{e'}} = p^{2k-|e \cap e'|}$ for $e,e' \in \mH$ and therefore
\begin{align*}
	\underset{\substack{(e,e') \in \mH^2 \\ e \cap e' \neq \emptyset}}{\sum} \; \EE{\Ind{e} \Ind{e'}} & =  \sum_{e \in \mH} \; \sum_{\emptyset \neq T \subseteq e} \sum_{\substack{e' \in \mH \\ e \cap e' = T}} p^{2k-|T|} \leq \sum_{e \in \mH} \; \sum_{\emptyset \neq T \subseteq e} \! \deg(T) \, p^{2k-|T|}	\\
	& \leq e(\mH) \, \left( 2^k - 1 \right) \max_{\emptyset \neq T \subseteq V(\mH)} \left(\deg(T) \, p^{2k-|T|}\right) \\
	& \leq 2^{k} \, e(\mH) \; \max_{1 \leq \ell \leq k}\left( \Delta_{\ell}(\mH) \, p^{2k-\ell} \right) \\
	& = 2^{k} \, e(\mH) \, p^{2k-1} \; \max\left( \max_{2 \leq \ell \leq k} \left( \frac{\Delta_{\ell}(\mH)}{p^{\ell-1}} \right), \, \Delta_1(\mH) \right) \\
	& = 2^k \, \frac{\mathbb{E}(X)^2}{p \; v(\mH) \, \density{\mH}} \; \max\left( \max_{2 \leq \ell \leq k} \left( \frac{\Delta_{\ell}(\mH)}{p^{\ell-1}} \right), \, \Delta_1(\mH) \right).
\end{align*}
Using Condition~\emph{(Mi)} we now get
\begin{align*}
	\underset{\substack{(e,e') \in \mH^2 \\ e \cap e' \neq \emptyset}}{\sum} \; \EE{\Ind{e} \Ind{e'}} & \leq 2^{k} \, \frac{\EE{X}^2}{p \, v(\mH)} \; \max\left(\max_{2 \leq \ell \leq k} \left( \frac{\Delta_{\ell}(\mH)}{\density{\mH} \, p^{\ell-1}} \right), \, c_1 \right) \\
	& \leq 2^{k} \, \frac{\EE{X}^2}{p \, v(\mH)} \; \max\left(\max_{2 \leq \ell \leq k} \left( \frac{1}{\left( f(\mH) \; p \right)^{\ell-1}} \right), \, c_1\right) = c_1 \, 2^{k} \, \frac{\EE{X}^2}{p \, v(\mH)} \,.
\end{align*}
where the last inequality follows from the definition of $f(\mH)$ and the last equality follows from the fact that $f(\mH) \, p =1 $ and $c_1 \geq k$. Now, using the estimate from Theorem~\ref{thm:JLR}, we get
\begin{equation} \label{eq:noedges_binomial}
	\PP{V(\mH)_p \text{ contains no edge of } \mH} = \PP{X = 0} \leq \exp (-c' \, v(\mH) \, p)
\end{equation}
where $c' = 1/(c_1 \, 2^k)$. The following lemma will now allow us to bound the probability of a uniform random set fulfilling our desired property by the probability that a corresponding binomial random set fulfils it. A proof can be found in the Appendix.
\begin{lemma}\label{lem:randomsubset}
	Let $X \sim \mB(n,p)$ and let $\mP$ be a monotone decreasing family of subsets of $[n]$.Then there exists a constant $C>0$ such that if $\sqrt{np(1-p)} > C $, then  $\PP{[n]_{\floor{np}} \in \mP} \leq 3 \, \PP{[n]_p \in \mP}$.
\end{lemma}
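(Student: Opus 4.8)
\textbf{Proof plan for Lemma~\ref{lem:randomsubset}.}
The plan is to compare the two random sets by conditioning the binomial set on its size and exploiting that $\mP$ is downward closed. First I would observe that, conditionally on the event $|[n]_p| = m$, the set $[n]_p$ is uniformly distributed over the $m$--subsets of $[n]$, i.e. it has the law of $[n]_m$; this is immediate from $\PP{[n]_p = T \mid |[n]_p| = m} = 1/\binom{n}{m}$ for every $m$--set $T$. Hence, writing $g(m) := \PP{[n]_m \in \mP}$ and $X \sim \mB(n,p)$, we get $\PP{[n]_p \in \mP} = \sum_{m=0}^n \PP{X = m}\, g(m)$.

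The second ingredient is that $g$ is non--increasing. Since $\mP$ is monotone decreasing (downward closed), one can couple $[n]_m$ and $[n]_{m+1}$ by taking $[n]_{m+1}$ uniformly and deleting one of its elements uniformly at random to obtain a uniform $m$--set $[n]_m \subseteq [n]_{m+1}$; then $[n]_{m+1} \in \mP$ forces $[n]_m \in \mP$, so $g(m+1) \leq g(m)$. Consequently, with $M := \floor{np}$, every term with $m \leq M$ satisfies $g(m) \geq g(M)$, and therefore
\begin{equation*}
	\PP{[n]_p \in \mP} \;\geq\; \sum_{m=0}^{M} \PP{X=m}\, g(m) \;\geq\; g(M)\,\PP{X \leq M} \;=\; \PP{[n]_{\floor{np}} \in \mP}\cdot \PP{X \leq \floor{np}}.
\end{equation*}
So it suffices to show that $\PP{X \leq \floor{np}} \geq 1/3$ once $\sigma := \sqrt{np(1-p)}$ exceeds a suitable absolute constant $C$.

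For this last step I would use a quantitative central limit estimate: by the Berry--Esseen theorem applied to the sum of $n$ i.i.d.\ Bernoulli$(p)$ variables (whose third absolute central moment is at most $p(1-p)$, so that the error term is at most an absolute constant $C_0$ over $\sigma$), we have $\big|\PP{X \leq t} - \Phi\big((t-np)/\sigma\big)\big| \leq C_0/\sigma$ for all $t$. Since $X$ is integer--valued and $np - 1 \leq \floor{np}$, the event $\{X \leq np-1\}$ is contained in $\{X \leq \floor{np}\}$, hence $\PP{X \leq \floor{np}} \geq \PP{X \leq np - 1} \geq \Phi(-1/\sigma) - C_0/\sigma$, and the right--hand side tends to $1/2$ as $\sigma \to \infty$; in particular it is $\geq 1/3$ for $\sigma > C$ with $C$ absolute. (Alternatively one can avoid Berry--Esseen and argue that the median of $\mB(n,p)$ is within distance $1$ of $np$, so $\PP{X \leq \floor{np}}$ differs from $\PP{X \leq \mathrm{median}} \geq 1/2$ by at most one point mass, each of which is $O(1/\sigma)$ by a direct Stirling estimate on $\max_k \PP{X=k}$.)

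The only genuinely delicate point is this last estimate in the regime where $np$ sits just below an integer, so that $\floor{np}$ can drop to roughly one standard deviation below the median; the $-1$ slack in $\PP{X \leq \floor{np}} \geq \PP{X \leq np-1}$ is exactly what absorbs this, and it is harmless because the lower tail probability at $np - 1$ still converges to $1/2$. Everything else is routine, and the constant $3$ is not optimal — any constant strictly larger than $2$ would do for $C$ large enough.
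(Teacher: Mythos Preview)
Your proof is correct and the core reduction --- conditioning $[n]_p$ on its size, noting that the conditional law is $[n]_m$, and using the monotonicity of $g(m)=\PP{[n]_m\in\mP}$ to get $\PP{[n]_p\in\mP}\geq \PP{[n]_{\floor{np}}\in\mP}\cdot\PP{X\leq\floor{np}}$ --- is exactly what the paper does. The only difference is the final step: the paper argues elementarily via the location of the median of $\mB(n,p)$ in $\{\floor{np},\ceil{np}\}$ together with a direct Stirling bound on the single point mass $\PP{X=\ceil{np}}=O(1/\sigma)$, which is precisely your parenthetical alternative; your primary route instead invokes Berry--Esseen to obtain $\PP{X\leq\floor{np}}\geq\Phi(-1/\sigma)-C_0/\sigma\to 1/2$. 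Both work; the paper's approach is more self-contained (no external CLT-type theorem), while yours is quicker to write down if one is willing to quote Berry--Esseen. One small wording quibble: $\floor{np}$ sits at most one \emph{unit} below the median, not one standard deviation --- but this does not affect your argument, since the Berry--Esseen bound absorbs a shift of $1/\sigma$ standard deviations just as well.
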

Since the property of ‘not containing an edge of $\mH$’ is monotone decreasing, we can choose $\tilde{c}$ such that we can apply Lemma~\ref{lem:randomsubset} to restate~\eqref{eq:noedges_binomial} for the uniform random set model as follows:
\begin{equation}\label{eq:noedges}
	\PP{V(\mH)_{\bar{M}} \text{ contains no edge of } \mH} \leq 3 \, \PP{V(\mH)_p \text{ contains no edge of } \mH ) \leq 3 \, \exp(-c'\bar{M}}
\end{equation}
for any  $\bar{M}  \geq \floor{v(\mH)/f(\mH)}$.

We are now ready to finish the proof. Let $M  \geq 2 \, \floor{v(\mH)/f(\mH)}$ and let $\delta = \delta(k,c_1) \in (1/2,1)$ be such that $(1-\delta)(1-\ln(1-\delta)) < c'/4$. To see that this is indeed possible, note that for $x \in (0,1)$ the function $g(x) = (1-x)(1-\ln(1-x)) $ satisfies $g(x) \rightarrow 0$ as $x \rightarrow 1$. Consider pairs $(T,T')$ where $T \subset V(\mH)$ with $|T| = M$ and $T' \subseteq T$ is such that $|T'| = \delta \, M$ and $T'$ does not contain an edge of $\mH$. Using Inequality~\eqref{eq:noedges} with $\delta \, M > \floor{v(\mH)/f(\mH)}$, we can estimate the number of choices for a set $T'$ of size $\delta \, M$ that contains no edge of $\mH$ by
\begin{equation*}
	3 \, \exp(-c' \delta M) \, \binom{v(\mH)}{\delta \, M} \leq 3 \, \exp\Big(\! -c' \frac{M}{2} \Big) \, \binom{v(\mH)}{\delta \, M}.
\end{equation*}
Hence, we can upper bound the number of pairs $(T,T')$ as described above by
\begin{equation*}
	3 \, \exp\Big(\! -c' \frac{M}{2}\Big) \, \binom{v(\mH)}{\delta \, M} \, \binom{v(\mH) - \delta M}{(1-\delta) \, M} = 3 \, \exp\Big(\! -c' \frac{M}{2} \Big) \, \binom{M}{(1-\delta) \, M} \, \binom{v(\mH)}{M}.
\end{equation*}
We can therefore upper bound the number of choices for a set $T$ of size $M$ containing a subset of size $\delta M$ that does not contain an edge of $\mH$ by
\begin{equation*}
	3 \, \exp\Big(\! -c' \frac{M}{2} \Big) \, \binom{M}{(1-\delta) \, M} \, \binom{v(\mH)}{M} \leq 3 \, \exp \left( M (-c'/2 + (1-\delta)(1-\ln(1-\delta))) \right) \, \binom{v(\mH)}{M}.
\end{equation*}
Hence we get
\begin{equation*}
	\PP{V(\mH)_{M} \text{ is  not } \delta \text{-stable }} \leq  3 \, \exp \left( M(-c'/2 + (1-\delta)(1-\ln(1-\delta))) \right) \leq 3\exp\left( -M \frac{c'}{4}\right)
\end{equation*}
where the last inequality follows by choice of $\delta = \delta(k,c_1)$. \hfill $\square$

\section{Proof of Theorem~\ref{cor:BreakerWinCriterion} -- Breaker's Strategy} \label{sec:BreakerProof}

We will derive Theorem~\ref{cor:BreakerWinCriterion} from the following stronger combinatorial statement.
\begin{theorem}[Breaker Win Criterion] \label{thm:StrongerBreakerWinCriterion}
For every $k \geq 2$ and $t > (2k)^k$ the following holds. If $\mH$ is a $k$--uniform hypergraph, then Breaker has a winning strategy in $\bG(\mH;q)$ provided that
\begin{equation*}
	q > 4 \max \left( \left( (2 \, v(\mH))^{1/t} \; \Delta_1(\mH) \; k e \right)^{\frac{1}{k-1}},\; 2 k^2 t^3 \left(\max\limits_{2 \leq \ell \leq k-1} \left( \Delta _{\ell}(\mH) \left( (tk)^{tk} \; k^t \; v(\mH)^2 \right)^{\frac{k}{t^{1/k}}} \right)^{\frac{1}{k-\ell}} +2 \right) \right).
\end{equation*}
\end{theorem}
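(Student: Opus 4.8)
The plan is to build an explicit Breaker strategy by combining the biased Erd\H{o}s--Selfridge criterion with a ``bias-doubling'' trick that emulates the deletion step of the probabilistic method, exactly along the lines hinted at in the introduction. Recall that the biased Erd\H{o}s--Selfridge theorem (Beck) says that if Breaker plays second with bias $q$ on a $k$--uniform hypergraph $\mF$ and $\sum_{e \in \mF} (q+1)^{-|e|} = e(\mF)(q+1)^{-k} < 1/(q+1)$, then Breaker wins $\bG(\mF;q)$. The obstacle is that our hypergraph $\mH$ need not satisfy this bound: $e(\mH)$ can be as large as $v(\mH)\cdot d(\mH)$, far more than $(q+1)^{k}$ for the $q$ we are allowed. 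The resolution is to first pass, \emph{before the game starts}, to a sub-hypergraph $\mH'$ on which Erd\H{o}s--Selfridge \emph{does} apply, at the cost of Breaker having to independently neutralize all the edges of $\mH$ that were discarded.

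First I would fix a threshold and greedily extract from $\mH$ a sub-hypergraph $\mH'$ that is ``spread out'': iteratively, as long as some $\ell$--set $S$ with $2 \le \ell \le k-1$ has degree exceeding a carefully chosen bound $D_\ell$ in the current hypergraph, delete one edge through $S$ and record the pair $(S, \text{deleted edge})$; also handle the $\ell = 1$ case so that the final $\mH'$ has $\Delta_1(\mH') \le D_1$. Choosing $D_1 \approx (2v(\mH))^{1/t}\Delta_1(\mH)$ and the $D_\ell$ driven by the second max-term, one arranges that (a) $\mH'$ satisfies the Erd\H{o}s--Selfridge inequality for bias $q_1 \approx 4((2v(\mH))^{1/t}\Delta_1(\mH)ke)^{1/(k-1)}$, using that $e(\mH') \le v(\mH')\Delta_1(\mH')/k$ and a counting estimate on $D_1$; and (b) the ``bad pairs'' removed during the extraction can themselves be organized into $O(t^3 \cdot \text{poly})$ auxiliary $k$--uniform hypergraphs each of which again satisfies an Erd\H{o}s--Selfridge-type bound for a modest bias. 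The number of rounds of deletion, and hence the number of auxiliary hypergraphs, is controlled by the regularity gap between $\Delta_\ell$ and $\Delta_1$; the exponent $t > (2k)^k$ and the peculiar powers $((tk)^{tk}k^t v(\mH)^2)^{k/t^{1/k}}$ appear precisely in making these iterated bounds close.

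Then I would have Breaker play all these games in parallel via a standard ``fake board'' / potential-splitting argument: Breaker partitions his bias $q$ into $q_1$ for the main game on $\mH'$ and further blocks for each auxiliary game, and on each board responds according to its own Erd\H{o}s--Selfridge strategy (treating moves he does not make there as harmless for Maker). Because $q$ exceeds $4$ times the displayed maximum, there is enough bias to split: the first term covers $\mH'$, the second term (the $2k^2 t^3(\cdots + 2)$ factor) covers the $O(t^3)$ auxiliary boards each needing bias up to roughly $2(\max_\ell \Delta_\ell(\cdots)^{1/(k-\ell)} + 2)$. If Maker ever completes an edge $e$ of $\mH$, then either $e \in \mH'$ — impossible, since Breaker wins that board — or $e$ was deleted, hence $e$ lies in one of the auxiliary hypergraphs where again Breaker wins, contradiction. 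The main obstacle, and the part requiring the most care, is the bookkeeping in the extraction step: showing that the discarded edges really do decompose into boundedly many hypergraphs each thin enough for Erd\H{o}s--Selfridge, and verifying that the chosen $D_\ell$ simultaneously make $\mH'$ thin \emph{and} keep the number of deletion rounds (hence auxiliary boards) small — this is where the $t$-dependent constants are forced and where the inequality in the statement is exactly what the computation needs.
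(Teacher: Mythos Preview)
Your proposal takes a genuinely different route from the paper, and it has a real gap.

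The paper's proof is \emph{dynamic}, not a static preprocessing argument. Breaker splits his bias into three parts. The workhorse is strategy SB3: with $q/2$ moves per round, Breaker immediately occupies the open element of every \emph{dangerous almost complete solution} (an edge $H$ all but one of whose vertices Maker already owns). By induction there are no dangerous almost complete solutions after Breaker's move; when Maker next plays a vertex $w$, every newly dangerous almost complete solution contains $w$, so they all form a single \emph{fan} with common element $w$. Breaker wins provided this fan never has more than $q/2$ petals. To guarantee that, the other two sub-strategies are used: SB1 (bias $q/4$) applies Erd\H{o}s--Selfridge to the hypergraph of \emph{simple $t$-fans} (unions of $t$ edges meeting in one vertex and otherwise disjoint), preventing Maker from ever occupying $\tfrac{1}{2}\binom{q/4}{t}$ of them; SB2 (bias $\bar q$) applies Erd\H{o}s--Selfridge to the hypergraph of \emph{$t$-clusters} (unions of $t$ edges with a common pair), which prevents dangerous $t(\bar q+1)$-flowers. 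A counting argument on an auxiliary graph (vertices $=$ petals of a putative dangerous $(q/2)$-fan, edges $=$ pairs intersecting in $\geq 2$ points) then shows that such a fan would contain $\tfrac{1}{2}\binom{q/4}{t}$ simple $t$-fans, contradicting SB1.

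The crucial point you are missing is that Erd\H{o}s--Selfridge is never applied to a $k$-uniform sub-hypergraph of $\mH$; it is applied to hypergraphs whose edges are \emph{unions of $t$ edges of $\mH$}, which are much larger (size $\approx t(k-1)$ or more) and correspondingly much sparser relative to the relevant power of $q$. This is exactly what makes the computation close. Your preprocessing approach tries to decompose the edge set of $\mH$ itself into boundedly many thin $k$-uniform pieces and apply Erd\H{o}s--Selfridge to each, but you give no mechanism for why the deleted edges should organize into $O(t^3)$ hypergraphs each satisfying the Erd\H{o}s--Selfridge bound; the number of deletions required to bring the $\ell$-degrees down can be of the same order as $e(\mH)$, and the resulting pieces are still $k$-uniform with essentially the same obstacle you started with. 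The ``deletion step of the probabilistic method'' analogy in the introduction refers to the dynamic closing-off strategy SB3, not to an offline thinning of the board.
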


Note that $e$ above denotes the base of the natural logarithm and should not be confused with the number of edges. We start by giving a proof of Theorem~\ref{cor:BreakerWinCriterion} using Theorem~\ref{thm:StrongerBreakerWinCriterion}. Then we define the necessary concepts for the remainder of the section. Following this we present the two main strategies for Breaker and prove their correctness. Finally we prove Theorem~\ref{thm:StrongerBreakerWinCriterion} using these ingredients.

\begin{proof}[Proof of Theorem~\ref{cor:BreakerWinCriterion} from Theorem~\ref{thm:StrongerBreakerWinCriterion}]

Let $k \geq 2$ and $\epsilon > 0$ be given and set $t = \log v(\mH)$. Assume that $v(\mH)$ is large enough such that $\log v(\mH) > (2k)^k$. Using $e = v(\mH)^{1 / \log v(\mH)}$ it is straightforward to check that
\begin{equation*}
	\left( (2n)^{1/t}\Delta_1(\mH) \; ke \right)^{\frac{1}{k-1}} \leq C_1' \, \Delta_1(\mH)^{\frac{1}{k-1}}
\end{equation*}
for some constant $C_1' = C_1'(k) > 0$. Similarly for $v(\mH)$ sufficiently large we can upper bound the term
\begin{equation*}
	2 k^2 t^3 \left(\max_{2 \leq \ell \leq k-1} \left( 2k \; \Delta _{\ell}(\mH)  \left( v(\mH)^2 \; (tk)^{tk}  \right)^{\frac{k}{t^{1/k}}}\right)^{\frac{1}{k-\ell}} +2 \right) \leq C_2' \, v(\mH)^{C_3' \frac{\log\log v(\mH)}{\log^{1/k} v(\mH)} } \max_{2\leq \ell \leq k-1} \left(\Delta_{\ell}(\mH) \right)^{\frac{1}{k-\ell}}
\end{equation*}
for some constants $C_2' = C_2'(k) > 0$ and $C_3' = C_3'(k) > 0$. Note that $\log\log v(\mH) / \log^{1/k} v(\mH) \rightarrow 0$ and so for $v(\mH)$ large enough this will be at most $v(\mH)^{\epsilon} \, \max_{2\leq\ell \leq  k-1} \left( \Delta_{\ell}(\mH)^{\frac{1}{k-\ell}}\right)$. Choose $C_1 = C_1(k) \geq \max(C_1', C_2', 4)$ and $v_0 = v_0(k)$ large enough, giving us the statement that Breaker has a winning strategy if
	\begin{equation} \label{eq:BreakerBiasBound}
		q \geq C_1 \max \left(\Delta_1(\mH)^{\frac{1}{k-1}}, \; \max_{2 \leq \ell \leq k-1} \left( \Delta_{\ell}(\mH)^{\frac{1}{k-\ell}}\right) \, v(\mH)^{\epsilon} \right).
	\end{equation}
	From this, the desired statement of Theorem~\ref{cor:BreakerWinCriterion} immediately follows.
\end{proof}

\subsection{Preliminaries for the proof of Theorem~\ref{thm:StrongerBreakerWinCriterion}}

One of the most important results in the area of positional games is the Erd\H{o}s-Selfridge Theorem~\cite{ES73}, the biased version of which is due to Beck~\cite{Be85}. It ensures that Breaker can do at least as well as the expected outcome when both players act randomly. We will use the following consequence of it heavily in the proof of Theorem~\ref{thm:StrongerBreakerWinCriterion}.

\begin{theorem}[Biased Erd\H{o}s-Selfridge Theorem~\cite{Be85}] \label{thm:BiasedErdosSelfridge}
	For every hypergraph $\mH$ and integer $q \geq 1$ the following holds. If Breaker plays as the second player, he can keep Maker from covering more than
	\begin{align} \label{eq:BiasedErdosSelfridge}
	(q+1) \; \sum_{H \in \mH} \left( \frac{1}{q+1} \right)^{|H|}
	\end{align}
	winning sets in $\bG(\mH;q)$. If he plays as the first player, then one can omit the first $(q+1)$ factor.
\end{theorem}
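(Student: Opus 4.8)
The plan is to prove this by the classical potential-function (weight-function) method of Erd\H{o}s and Selfridge, in the biased form due to Beck. I would attach to each winning set $H \in \mH$ a weight measuring how close Maker is to occupying it: if Breaker has already claimed a vertex of $H$, put $w(H) = 0$; otherwise, writing $a(H)$ for the number of vertices of $H$ currently held by Maker, put $w(H) = (q+1)^{-(|H| - a(H))}$. Let $\Phi = \sum_{H \in \mH} w(H)$ and let $\Phi_0 = \sum_{H \in \mH}(q+1)^{-|H|}$ be its value at the start. Once the board is full, every set is either blocked by Breaker (weight $0$) or entirely occupied by Maker (weight $(q+1)^{0} = 1$), so at the end $\Phi$ equals precisely the number of winning sets Maker has covered. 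Hence it suffices to give Breaker a strategy that keeps $\Phi \le (q+1)\Phi_0$ throughout when he moves second, and $\Phi \le \Phi_0$ when he moves first.

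The strategy I would give Breaker is the greedy one: on each of his turns he claims $q$ still-unclaimed vertices one at a time, each time picking a vertex $v$ of maximal current \emph{danger} $d(v) := \sum_{H \ni v,\; w(H) > 0} w(H)$, recomputing the weights after each pick. Two elementary facts drive the argument. First, when Maker claims a vertex $x$ the potential increases by exactly $q\,d(x)$, since every unblocked set through $x$ has its weight multiplied by $q+1$. Second, within one Breaker turn the dangers $r_1 \ge r_2 \ge \dots \ge r_q$ of his successive picks are non-increasing — a pick can only lower the danger of every remaining vertex — so the turn decreases $\Phi$ by $r_1 + \dots + r_q \ge q\,r_q$; moreover $r_q$ is the maximal danger among the vertices still unclaimed just before Breaker's last pick, and since that last pick can only decrease dangers of unclaimed vertices, the vertex Maker claims on his \emph{next} move (still unclaimed at that point) has danger at most $r_q$ at the moment Maker makes that move. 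Combining these yields the \textbf{Main Lemma}: every Breaker turn decreases $\Phi$ by at least $q$ times the danger, evaluated at the time it is made, of the vertex Maker claims on his immediately following move.

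To conclude, let $d_i$ denote the danger of Maker's $i$-th claimed vertex at the moment of that move. By the first fact, the potential at the end of the game equals $\Phi_0 + q\sum_i d_i - \sum_i D_i$, where $D_i$ is the decrease caused by Breaker's $i$-th turn. If Breaker moves second, the Main Lemma gives $D_i \ge q\,d_{i+1}$, so the sums telescope and the final potential is at most $\Phi_0 + q\,d_1$; since $d_1 = \sum_{H \ni x_1}(q+1)^{-|H|} \le \Phi_0$, this is at most $(q+1)\Phi_0$, which is the asserted bound~\eqref{eq:BiasedErdosSelfridge}. If Breaker moves first, the Main Lemma gives instead $D_i \ge q\,d_i$, the extra $q\,d_1$ term never appears, and the bound improves to $\Phi_0$, as claimed.

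The only genuinely non-routine step is the Main Lemma — that a $q$-fold greedy Breaker turn decreases $\Phi$ by at least the amount $q\,d_{i+1}$ that Maker's next move will add back. This rests on the monotonicity $r_1 \ge \dots \ge r_q$ of the greedy dangers together with the comparison of $r_q$ with the danger of Maker's impending vertex, and care must be taken about which set of weights each danger is evaluated against and about the fact that the relevant vertex is still available. Everything else — the telescoping sum, the estimate $d_1 \le \Phi_0$, and the bookkeeping when the two players make unequal numbers of moves — is routine.
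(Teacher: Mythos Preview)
The paper does not prove this theorem at all: it is quoted from Beck~\cite{Be85} as a known tool and used as a black box in Section~\ref{sec:BreakerProof}. So there is no ``paper's own proof'' to compare against.

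That said, your argument is the standard potential-function proof and is correct. The weight $w(H)=(q+1)^{-(|H|-a(H))}$, the danger $d(v)=\sum_{H\ni v}w(H)$, the greedy Breaker strategy, and the telescoping via the Main Lemma are exactly the ingredients of Beck's original proof of the biased criterion (and of the unbiased Erd\H{o}s--Selfridge theorem before it). The monotonicity $r_1\ge\cdots\ge r_q$ and the comparison of $r_q$ with the danger of Maker's next vertex are argued correctly, and the bookkeeping for the two cases (Breaker second versus Breaker first) is right. One cosmetic remark: when fewer than $q$ vertices remain Breaker simply claims all of them, which can only help; you might mention this explicitly, but it does not affect the bound.
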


\noindent We will also need the following simple yet powerful remark.
\begin{remark} \label{rmk:biasdoubling}
	If Breaker has a winning strategy for some positional game $\bG(\mH;q)$ where he is allowed to make \emph{at most} $q$ moves each round, then he also wins if he has to make \emph{exactly} $q$ moves each round. It follows that if he has a winning strategy for some game $\bG(\mH_1;q_1)$ and a winning strategy for another game $\bG(\mH_2;q_2)$, then he can combine these two strategies to define a winning strategy in $\bG(\mH_1 \cup \mH_2;q_1+q_2)$.
\end{remark}
\noindent This remark will be used extensively throughout the proof. Furthermore, we will need the following definitions, which are based on those developed in~\cite{BL00}.
\begin{definition}[Set-Theoretic definitions]
	Given some hypergraph $\mH$, we define the following:
	\begin{enumerate} \setlength\itemsep{-0em}
		\item[--]	a \emph{$t$--cluster} is any family of distinct edges $\{ H_1, \dots, H_t \} \subset \mH$ satisfying $|\bigcap_{i = 1}^{t} H_i| \geq 2$,
		\item[--]	an \emph{almost complete solution $(H^{\circ},h)$} is a tuple consisting of a set  $H^{\circ} \subseteq V(\mH)$ as well as an element $h \notin H^{\circ}$ so that $H = H^{\circ} \cup \{ h \}$ is an edge in $\mH$,
		\item[--]	a \emph{$t$--fan} is a family of distinct almost complete solutions $\{ (H_1^{\circ},h_1), \dots , (H_t^{\circ},h_t) \}$ in $\mH$ satisfying $|\bigcap_{i = 1}^{t} H_i^{\circ}| \geq 1$ and it is called \emph{simple} if $|H_i^{\circ} \cap H_j^{\circ} | = 1$ for all $1 \leq i < j \leq t$,
		\item[--] 	a \emph{$t$--flower} is a $t$--fan satisfying $|\bigcap_{i = 1}^{t} H_i^{\circ}| \geq 2$.
	\end{enumerate}
	For each $t$--fan in $\mH$ we call the $h_i$ the \emph{open elements}, the $H_i^{\circ}$ the \emph{major parts} and the elements of the intersection $\bigcap_{i = 1}^{t} H_i^{\circ}$ the \emph{common elements}.
\end{definition}
\begin{definition}[Game-Theoretic Definitions]
	At any given point in a positional game on a given hypergraph $\mH$, we call an almost complete solution $(H^{\circ},h)$ \emph{dangerous} if all elements of $H^{\circ}$ have been picked by Maker and $h$ has not yet been picked by either player. A fan or flower is dangerous if their respective almost complete solutions are.
\end{definition}
Observe that for a dangerous $t$--fan or $t$--flower we must have $h_i \notin H_j^{\circ}$ for all $1 \leq i,j \leq t$. In the following we will always assume that Breaker plays as second player. We say that a player \emph{occupies} a given $t$--fan or $t$--flower $(H_1^{\circ},h_1), \dots, (H_t^{\circ},h_t)$ if his selection of vertices contains $\bigcup_{i=1}^t H_i^{\circ}$. Similarly a player \emph{occupies} a $t$--cluster $H_1, \dots, H_t$ if his selection of vertices contains $\bigcup_{i=1}^t H_i$.

\subsection{Two important strategies for Breaker}

The following two lemmata give us strategies that we will use to construct a larger strategy in the proof of Theorem~\ref{thm:StrongerBreakerWinCriterion}. Note that in the statement of the lemma we do not care about which player covers the open elements of a fan.
\begin{lemma}\label{lem:fans}
	For every integer $k\geq 2$ and $t\geq 1$ the following holds. If $\mH$ is a $k$--uniform hypergraph, then Breaker with a bias of $q > \left( (2 \, v(\mH))^{1/t}\; \Delta_1(\mH) \; ke \right)^{1/(k-1)} $ has a strategy that prevents Maker from occupying $1/2 \, {q \choose t}$ simple $t$--fans in the game $\bG(\mH;q)$.
\end{lemma}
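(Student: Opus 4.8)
\textbf{Proof proposal for Lemma~\ref{lem:fans}.}

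The plan is to apply the biased Erd\H{o}s-Selfridge Theorem (Theorem~\ref{thm:BiasedErdosSelfridge}) to an auxiliary hypergraph whose edges are exactly the vertex sets of simple $t$--fans, so that bounding the number of such fans Maker can complete reduces to estimating a weighted edge count. First I would set up the auxiliary hypergraph $\mathcal{F}$ on vertex set $V(\mH)$ whose edges are the sets $\bigcup_{i=1}^t H_i^\circ$ ranging over all simple $t$--fans $\{(H_1^\circ,h_1),\dots,(H_t^\circ,h_t)\}$ in $\mH$; the crucial point is that a simple $t$--fan has exactly one common element $v$ and the major parts pairwise intersect only in $v$, so such an edge has size exactly $1 + t(k-2)$ (the common vertex plus $t$ disjoint ``petals'' of size $k-2$). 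Running biased Erd\H{o}s-Selfridge on $\mathcal{F}$ with bias $q$, Breaker (as second player) can keep Maker from occupying more than $(q+1)\sum_{F\in\mathcal F}(q+1)^{-|F|}$ of these fans, so it suffices to show this quantity is at most $\tfrac12\binom{q}{t}$.

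The heart of the argument is therefore the counting estimate: the number of simple $t$--fans with a prescribed set of major parts is controlled by the maximum $1$--degree $\Delta_1(\mH)$. I would count by first choosing the common vertex $v$ (at most $v(\mH)$ choices), then successively choosing the $t$ major parts: each major part is determined by an edge of $\mH$ through $v$ together with the designation of one of its vertices as the open element $h_i$; there are at most $\Delta_1(\mH)$ edges through $v$ and at most $k$ choices of $h_i$ within each, so at most $(k\,\Delta_1(\mH))^t$ ordered choices of the $t$ major parts, and dividing by $t!$ for the ordering. Hence the number of simple $t$--fans is at most $v(\mH)\,(k\,\Delta_1(\mH))^t/t!$. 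Plugging into the Erd\H{o}s-Selfridge bound and using $|F| = 1 + t(k-2) \ge t(k-1) - t + 1$, together with $t! \ge (t/e)^t$, I would obtain
\begin{equation*}
	(q+1)\sum_{F\in\mathcal F}(q+1)^{-|F|} \le (q+1)\cdot\frac{v(\mH)\,(k\,\Delta_1(\mH))^t}{t!}\cdot(q+1)^{-1-t(k-2)}
	\le v(\mH)\,\Big(\frac{k e\,\Delta_1(\mH)}{t\,(q+1)^{k-2}}\Big)^t,
\end{equation*}
and I would compare this against $\tfrac12\binom{q}{t} \ge \tfrac12 (q/t)^t$; the inequality $v(\mH)\,(ke\,\Delta_1(\mH))^t (q+1)^{-t(k-2)} \le \tfrac12 q^t$ rearranges (taking $t$-th roots and absorbing the factor $(2v(\mH))^{1/t}$) precisely to the hypothesis $q > \big((2v(\mH))^{1/t}\,\Delta_1(\mH)\,ke\big)^{1/(k-1)}$, up to the harmless $q$ versus $q+1$ discrepancy which I would handle by a slightly generous constant.

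The main obstacle I anticipate is getting the exponent bookkeeping exactly right: one must be careful that the edges of $\mathcal F$ really all have size $\ge t(k-1) - t + 1$ (equivalently $1 + t(k-2)$) so that the Erd\H{o}s-Selfridge weight $(q+1)^{-|F|}$ carries a factor $(q+1)^{-t(k-2)}$ that exactly cancels the $(k\Delta_1)^t$ growth against the target $\binom{q}{t}$, which scales like $q^t$ — and this is why the bound on $q$ has $k-1$ in the exponent rather than $k$ or $k-2$. A secondary technical point is the passage from ``Breaker prevents Maker from \emph{covering} an edge of $\mathcal F$'' to ``Breaker prevents Maker from \emph{occupying} the fan'': since occupying a fan in the sense of the definition means Maker's selection contains $\bigcup_i H_i^\circ$, which is exactly an edge of $\mathcal F$, these coincide, and the open elements $h_i$ play no role here — consistent with the remark preceding the lemma that we do not care who covers the open elements. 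I would also note that the bias-rounding in Remark~\ref{rmk:biasdoubling} is not needed for this lemma itself but the ``at most $q$ versus exactly $q$'' distinction means the Erd\H{o}s-Selfridge application is clean.
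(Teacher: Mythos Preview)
Your proposal is correct and follows essentially the same route as the paper: define the auxiliary hypergraph $\mathcal F$ of unions of major parts of simple $t$--fans, apply the biased Erd\H{o}s--Selfridge bound, count fans by choosing the common vertex, then the $t$ edges through it and their open elements (the paper uses $k-1$ instead of your $k$ for the open element, but this is immaterial), divide by $t!\ge (t/e)^t$, and compare with $(q/t)^t\le\binom{q}{t}$. Your worry about the $q$ versus $q+1$ discrepancy is unnecessary---since $(q+1)^{-(k-2)}\le q^{-(k-2)}$ the replacement goes in the favourable direction, exactly as in the paper.
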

\begin{proof}
	Let $\mF = \big\{ \bigcup_{i=1}^{t} H_i^{\circ} \mid \{ (H_1^{\circ},h_1), \dots , (H_t^{\circ},h_t) \} \; \text{simple } t\text{-fan in } \mH \big\}$ be the hypergraph of all simple $t$--fans in $\mH$. We want to apply Theorem~\ref{thm:BiasedErdosSelfridge}, so we estimate
	\begin{equation*}
		(q+1) \; \sum_{F \in \mF} \left( \frac{1}{q+1} \right)^{|F|} \leq (q+1) \left( v(\mH) \; \frac{\Delta_1(\mH)^t \; (k-1)^t }{t!} \right) \left( \frac{1}{q+1} \right)^{t(k-2) + 1}.
	\end{equation*}
	This inequality holds because there are $v(\mH)$ ways to fix the common element of a simple $t$--fan, $\Delta_1(\mH)^t$ is an upper bound on the number of $t$--tuples of edges containing the fixed common element and there are $(k-1)^t$ ways of fixing the corresponding open elements. Note that an open element is never a common element by definition. Furthermore, $t!$ takes care of the symmetry and each simple $t$--fan is of size $t(k-2) + 1$. We therefore get
	\begin{align*}	
		(q+1) \, \sum_{F \in \mF} \left( \frac{1}{q+1} \right)^{|F|} & \leq v(\mH) \left( \frac{\Delta_1(\mH) \; ke}{t \; q^{k-2}} \right)^t = v(\mH) \left( \frac{\Delta_1(\mH) \; ke}{q^{k-1}} \right)^t \left( \frac{q}{t} \right)^t \\
		& < v(\mH) \; \frac{1}{2 \, v(\mH)} \left(\frac{q}{t} \right)^t \leq \frac{1}{2} \binom{q}{t}.
	\end{align*}
	The claim now follows by applying Theorem~\ref{thm:BiasedErdosSelfridge}.
\end{proof}
\begin{lemma} \label{lemma:dangerousflowers}
	For every integer $k \geq 2$ and $t > (2k)^k$ the following holds. If $\mH$ is a $k$--uniform hypergraph, then Breaker with a bias of
	\begin{equation} \label{eq:dangerousflowers}
		q > \max\limits_{2 \leq \ell \leq k-1} \left( \Delta_{\ell}(\mH) \; ((tk)^{tk} \; k^t \; v(\mH)^2)^{\frac{k}{t^{1/k}}} \right)^{\frac{1}{k-\ell}}
	\end{equation}
	has a strategy that prevents dangerous $t(q+1)$--flowers in $\bG(\mH;q)$
\end{lemma}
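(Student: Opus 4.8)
The plan is to mimic the structure of Lemma~\ref{lem:fans}: we will build an auxiliary hypergraph whose edges encode the ``obstructions'' a dangerous $t(q+1)$--flower would represent, and then invoke the biased Erd\H{o}s--Selfridge Theorem (Theorem~\ref{thm:BiasedErdosSelfridge}) together with the bias-doubling Remark~\ref{rmk:biasdoubling}. The key point is that in a dangerous flower all the major parts $H_i^\circ$ share at least two common elements, so a flower ``contains'' a large cluster-like structure. Concretely, a dangerous $t(q+1)$--flower $\{(H_1^\circ,h_1),\dots,(H_{t(q+1)}^\circ,h_{t(q+1)})\}$ has $|\bigcap_i H_i^\circ|\geq 2$; fixing two common elements $u,v$, each major part $H_i^\circ$ is determined by an edge $H_i=H_i^\circ\cup\{h_i\}$ through $u$ and $v$, and there are at most $\Delta_2(\mH)$ such edges. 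So if Maker occupies a dangerous $t(q+1)$--flower, his vertex set contains the union of $t(q+1)$ distinct major parts all passing through a common pair $\{u,v\}$. We let Breaker prevent Maker from occupying \emph{any} such union.

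First I would set up the target hypergraph. For each choice of common pair $\{u,v\}\subseteq V(\mH)$ and each family of $s:=t(q+1)$ distinct edges $H_1,\dots,H_s$ of $\mH$ all containing $\{u,v\}$, put into $\mF$ the vertex set $\bigcup_{i=1}^s (H_i\setminus\{h_i\})$ obtained by dropping one chosen element $h_i$ from each; but since we want the statement about \emph{dangerous} flowers, where the $h_i$ are still free and not in any $H_j^\circ$, it is cleaner to have Breaker simply prevent Maker from claiming $\bigcup_i H_i^\circ$ for any simple-enough sub-family --- exactly as ``occupies'' is defined before the lemma. Then I would estimate the Erd\H{o}s--Selfridge sum $(q+1)\sum_{F\in\mF}(q+1)^{-|F|}$. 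The number of ways to pick the pair $\{u,v\}$ is at most $v(\mH)^2$; the number of ways to pick the ordered $s$--tuple of edges through $\{u,v\}$ is at most $\Delta_2(\mH)^s$; the number of ways to designate the open elements is at most $k^s$; dividing by $s!$ for symmetry. Crucially, the union of $s$ major parts that pairwise share only $\{u,v\}$ (or, in the worst non-simple case, more) has size at least $2 + s(k-3)$ roughly --- one must be careful here and extract from the flower a sub-fan on which the major parts are ``spread out'' enough that the union is large; this is where the parameter $t$ and the hypothesis $t>(2k)^k$ enter, via a pigeonhole/sunflower-type pruning argument reducing a $t(q+1)$--flower to a sub-configuration with controlled pairwise intersections.

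The main obstacle, and the step I would spend the most care on, is precisely this pruning: a dangerous $t(q+1)$--flower need not be ``simple'' (the major parts can overlap a lot beyond the two common elements), so the naive lower bound on $|F|$ is too weak to beat the number of configurations. The standard device, following~\cite{BL00}, is to use the $\ell$--degree bounds $\Delta_\ell(\mH)$ for all $2\leq\ell\leq k-1$ and a counting argument that, from any $t(q+1)$--flower, locates a large sub-family on which intersections of major parts have bounded size, so that its union grows linearly in the sub-family size; the factor $t^3$, the $(tk)^{tk}k^t$ term, and the exponent $k/t^{1/k}$ in~\eqref{eq:dangerousflowers} are exactly the quantitative residue of this pruning. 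I would organise it as: (a) show any $t(q+1)$--flower contains a $(q+1)$--sub-fan whose major parts pairwise intersect in at most some $\ell_0<k$ elements beyond the common pair, by iteratively discarding edges that cluster too tightly (here $t>(2k)^k$ guarantees enough room to always find such a sub-fan); (b) for such a sub-fan the union $\bigcup H_i^\circ$ has size $\geq (q+1)(k-\ell_0)/2$, say; (c) feed the hypergraph of all such sub-configurations, over all pairs $\{u,v\}$ and all choices of the $\ell_0$, into Theorem~\ref{thm:BiasedErdosSelfridge}; the resulting Erd\H{o}s--Selfridge sum is
\begin{equation*}
	(q+1)\,v(\mH)^2\sum_{\ell=2}^{k-1}\frac{\Delta_\ell(\mH)^{q+1}\,k^{q+1}}{(q+1)!}\Big(\frac{1}{q+1}\Big)^{(q+1)(k-\ell)/2},
\end{equation*}
and one checks this is less than $1$ whenever $q$ exceeds the bound in~\eqref{eq:dangerousflowers}, using $t>(2k)^k$ to absorb the combinatorial overhead. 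By Remark~\ref{rmk:biasdoubling}, splitting Breaker's bias across the finitely many values of $\ell$ (and across the pair-choice) still leaves him winning with the stated bias, so Maker can never occupy such a pruned sub-configuration, hence never a dangerous $t(q+1)$--flower. I expect step (a)--(b), the combinatorial extraction of a ``spread-out'' sub-fan with union linear in its size, to be the genuinely delicate part; everything after it is a routine Erd\H{o}s--Selfridge computation.
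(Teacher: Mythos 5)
Your overall framework (encode the obstructions in an auxiliary hypergraph, apply the biased Erd\H{o}s--Selfridge Theorem, combine strategies via Remark~\ref{rmk:biasdoubling}) is the right one, but the step you yourself flag as delicate --- the pruning in (a)--(b) --- is not only unproven, it is quantitatively false as stated, and the computation built on it cannot work. If the major parts of a sub-fan have pairwise intersections of size at most $\ell_0\geq 2$ (and here they are at least $2$, since all contain the common pair $\{u,v\}$), the union of $s$ such $(k-1)$--sets need \emph{not} be linear in $s$: the only general lower bound is a packing count, $s\,\binom{k-1}{\ell_0+1}\leq\binom{|\bigcup H_i^\circ|}{\ell_0+1}$, i.e.\ $|\bigcup H_i^\circ|\gtrsim s^{1/(\ell_0+1)}$, and Steiner-system-type examples show this fractional power is essentially attained. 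This is fatal for your plan of running Erd\H{o}s--Selfridge on configurations of size $s\sim q+1$: the number of such configurations grows like $\Delta_2(\mH)^{s}$ while the weight $(q+1)^{-|F|}$ decays only like $(q+1)^{-s^{1/(\ell_0+1)}}$, so the sum cannot be pushed below $1$ with the bias in \eqref{eq:dangerousflowers}.

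The paper's proof avoids the pruning entirely by two moves you are missing. First, the Erd\H{o}s--Selfridge target is the family of \emph{$t$--clusters} --- $t$ \emph{full edges} (not major parts) whose common intersection has size at least $2$ --- where $t>(2k)^k$ is a \emph{constant}, not of order $q$. Clusters are classified by their intersection characteristic $\bell$; each class $\mF(\bell)$ is $v(\bell)$--uniform, the only lower bound needed on the union is $v(\bell)\geq t^{1/k}$ (from $t\leq\binom{v(\bell)-2}{k-2}$), and since $t$ is constant all combinatorial overhead is absorbed into the factor $((tk)^{tk}k^t v(\mH)^2)^{k/t^{1/k}}$ in \eqref{eq:dangerousflowers}; one then uses that at least $\lceil t^{1/k}/k\rceil$ of the $\ell_i$ satisfy $\ell_i<k$, so enough factors $\Delta_{\ell_i}(\mH)(q+1)^{-(k-\ell_i)}<1$ survive to make the whole sum less than $1$. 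Second, there is a purely game-theoretic reduction in place of your extraction argument: if Maker ever created a dangerous $t(q+1)$--flower, then by repeatedly claiming the free open element completing the largest number of its almost complete solutions he could fully occupy a $t$--cluster (as $t(q+1)/(q+1)=t$), contradicting the cluster-prevention strategy. Without this reduction from flowers of size $t(q+1)$ down to clusters of constant size $t$, the Erd\H{o}s--Selfridge estimate you propose does not close.
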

\begin{proof}
	Let $\mF = \big\{ \bigcup_{i=1}^{t} H_i \mid \{ H_1, \dots , H_t \} \; t\text{-cluster in } \mH \big\}$ be the hypergraph of all $t$--clusters in $\mH$. First we will show that Breaker can prevent $t$--clusters. Given some $t$--cluster $H_1, \dots, H_t$ let $\ell_i = \big| H_i \, \cap \, \bigcup_{j=1}^{i-1} H_j \big|$ for all $2 \leq i \leq t$. We call $(2,\ell_2, \dots, \ell_t)$ its \emph{intersection characteristic} and observe that $2 \leq \ell_i \leq k$ for $2 \leq i \leq t$. We will set $\ell_1 = 2$ for notational convenience. For any $\bell = (\ell_1, \dots, \ell_t) \in \{2\} \times [2,k]^{t-1}$ let $\mF (\bell)$ denote the set of edges in $\mF$ which come from some $t$--cluster with the intersection characteristic $\bell$ and observe that it is $v(\bell)$--uniform where
	\begin{equation} \label{eq:vbell}
		v(\bell) = 2 + \sum_{i = 1}^t (k - \ell_i) = k + \sum_{i = 2}^t (k - \ell_i).
	\end{equation}
	This follows since given any cluster $H_1, \dots, H_t$ with intersection characteristic $\bell$ we have $|\bigcup_{i=1}^{t} H_i| = v(\bell)$. There is the trivial upper bound $v(\bell) \leq tk$ for all $\bell \in \{2\} \times [2,k]^{t-1}$. Let $L = \{ \bell : \mF(\bell) \neq \emptyset \} \subseteq \{2\} \times [2,k]^{t-1}$ be the set of all intersection characteristics that actually occur in $\mH$. Now for any $\bell \in L$ we trivially have $t \leq \binom{v(\bell) - 2}{k - 2}$ which we restate as the lower bound
	\begin{equation} \label{eq:vbelllowerbound}
		v(\bell) \geq t^{1/k} \text{ for all } \bell \in L.
	\end{equation}

	Now for $\bell = (\ell_1, \dots, \ell_t) \in L$ observe that
	\begin{align*}
		|\mF(\bell)| & \leq \binom{v(\mH)}{2} \; \Delta_2(\mH) \; \prod_{i=2}^{t} \binom{k + \sum_{j=2}^{i-1} (k-\ell_i) - 2}{\ell_{i}-2} \; \Delta_{\ell_{i}}(\mH) \\
		& \leq \binom{v(\mH)}{2} \; \binom{v(\bell)-2}{k-2}^{t-1} \; \Delta_2(\mH) \; \prod_{i=2}^t \Delta_{\ell_{i}}(\mH)\leq v(\mH)^2 \; (tk)^{tk} \; \prod_{i=1}^t \Delta_{\ell_{i}}(\mH).
	\end{align*}	
	Here, the first inequality is justified by observing that there are $\binom{v(\mH)}{2}$ ways to fix two common elements and at most $\Delta_2(\mH)$ ways to choose the first edge $H_1$ of a $t$--cluster. The product counts ways to add the $i$--th additional edge $H_i$ for $2 \leq i \leq t$ by first fixing the intersection with the already established parts $\bigcup_{j = 1}^{i-1} H_j$ and then adding one of the at most $\Delta_{\ell_i}$ possible ways of picking $H_i$. The second inequality follows since by assumption $t > (2k)^k$ so that~\eqref{eq:vbelllowerbound} gives us $v(\bell) \geq 2k$ from which it follows that for all $2 \leq i \leq t$ we have
	\begin{equation*}
		\binom{k + \sum_{j=2}^{i-1} (k-\ell_i) - 2}{\ell_{i}-2} \leq {v(\bell)-2 \choose k-2}.
	\end{equation*}

	We now want to apply Theorem~\ref{thm:BiasedErdosSelfridge}, so we estimate
	\begin{align*}
		(q + 1) \sum_{F \in \mF} \left( \frac{1}{q+1} \right)^{|F|} & \leq (q + 1) \sum_{\ell_2 \in [2,k]} \!\! \cdots \!\! \sum_{\ell_t \in [2,k]} |\mF(\bell)| \; \left( \frac{1}{q + 1} \right)^{v(\bell)} \\
		& \leq (tk)^{tk} \; v(\mH)^2 \; (q + 1) \; \sum_{\bell \in L} \; \prod_{i=1}^t \Delta_{\ell_{i}}(\mH) \left( \frac{1}{q + 1} \right)^{v(\bell)}.
	\end{align*}
	where we have just inserted the previously stated upper bound on $|\mF(\bell)|$. We now split up the factor $\left(1/(q+1) \right)^{v(\bell)}$ using~\eqref{eq:vbell} to obtain
	\begin{align*}
		(q + 1) \sum_{F \in \mF} \left( \frac{1}{q+1} \right)^{|F|} & \leq(tk)^{tk} \; v(\mH)^2 \; \frac{1}{q+1} \; \sum_{\bell \in L} \; \prod_{i=1}^t \left( \Delta_{\ell_{i}}(\mH) \left( \frac{1}{q+1} \right)^{k - \ell_i} \right).
	\end{align*}
	Note that  we have $\Delta_{\ell}(\mH) \left( 1/(q+1) \right)^{k - \ell} =1$ for $\ell =k$ and  $\Delta_{\ell}(\mH) \left( 1/(q+1) \right)^{k - \ell} < 1$ for $2 \leq \ell < k$   due to the lower bound on $q$. Furthermore, since $\bell \in L$ is the intersection characteristic of a $t$--cluster in $\mH$, the number of indices  $1 \leq i \leq t$ for which $\ell_i < k$  must be at least $\bigceil{v(\bell)/k} \geq \bigceil{t^{1/k}/k}$. Now, due to~\eqref{eq:dangerousflowers} it follows that
	\begin{align*}	
		(q + 1) \sum_{F \in \mF} \left( \frac{1}{q+1} \right)^{|F|} & \leq (tk)^{tk} \; v(\mH)^2 \; k^t \; \left( \max_{2 \leq \ell \leq k-1} \Delta_{\ell}(\mH) \left( \frac{1}{q} \right)^{k - \ell} \right)^{\frac{t^{1/k}}{k}} < 1.
	\end{align*}	
	It follows, by applying Theorem~\ref{thm:BiasedErdosSelfridge}, that using a bias of  $q$, Breaker has a strategy to keep Maker from fully covering any $t$--cluster. Following this strategy, it is easy to see that Breaker will also keep Maker from creating a dangerous $t(q+ 1)$--flower at any point in the game. To see this, suppose that this is not the case and that Maker succeeds in creating such a dangerous flower. By repeatedly claiming the open element of this dangerous flower which has not yet been claimed and is the open element of the most almost complete solutions in the flower, Maker would be able to cover a $t$--cluster, as $t(q+ 1)/(q + 1) = t$, which is a contradiction.
\end{proof}

\subsection{Proof of Theorem~\ref{thm:StrongerBreakerWinCriterion}}

In order to join the previous two strategies together, we will need the following simple auxiliary statement. We include its simple proof for the convenience of the reader.
\begin{lemma} \label{lemma:auxiliary}
	For every $q \geq 2$ and $t \geq2 $ the following holds: If $F$ is a graph on $q$ vertices with $ e(F) < q^2/2t^2$ then $F$ has at least $1/2 \, {q \choose t}$ independent sets of size $t$.
\end{lemma}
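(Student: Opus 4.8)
The plan is to count independent $t$-sets in $F$ by inclusion–exclusion (or a one-sided union bound) over the edges of $F$. A $t$-subset of $V(F)$ fails to be independent precisely when it contains at least one edge of $F$. For a fixed edge $\{u,v\} \in E(F)$, the number of $t$-subsets containing both $u$ and $v$ is $\binom{q-2}{t-2}$. Hence the number of non-independent $t$-sets is at most $e(F)\binom{q-2}{t-2}$, and the number of independent $t$-sets is at least
\begin{equation*}
	\binom{q}{t} - e(F)\binom{q-2}{t-2}.
\end{equation*}
So it suffices to show that under the hypothesis $e(F) < q^2/(2t^2)$ we have $e(F)\binom{q-2}{t-2} \leq \tfrac12\binom{q}{t}$, i.e. that $e(F) \leq \tfrac12 \binom{q}{t}/\binom{q-2}{t-2}$.

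The key computation is the ratio $\binom{q}{t}\big/\binom{q-2}{t-2} = \frac{q(q-1)}{t(t-1)}$. So the required inequality becomes $e(F) \leq \tfrac12 \cdot \frac{q(q-1)}{t(t-1)}$, and since $q(q-1)/(t(t-1)) \geq q^2/t^2$ would be false in general (it is actually $\geq (q-1)^2/t^2$ only for... let me not overclaim), the cleanest route is: $\frac{q(q-1)}{t(t-1)} \geq \frac{q \cdot q/2}{t^2} = \frac{q^2}{2t^2}$ provided $q-1 \geq q/2$, i.e. $q \geq 2$, and $t(t-1) \leq t^2$ always. Thus $\tfrac12\cdot\frac{q(q-1)}{t(t-1)} \geq \tfrac12\cdot\frac{q^2}{2t^2} = \frac{q^2}{4t^2}$ — hmm, this gives a factor $4$, not $2$. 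I would instead use $q(q-1) \geq q^2/2$ (valid for $q\geq 2$) directly against the hypothesis $e(F) < q^2/(2t^2) \leq q(q-1)/(t^2) \cdot \tfrac12 \cdot 2$... I should just be careful here: from $e(F) < q^2/(2t^2)$ and $q^2 \leq 2q(q-1)$ (i.e. $q\geq 2$) and $t(t-1) \le t^2$, we get $e(F) < \frac{2q(q-1)}{2t^2} = \frac{q(q-1)}{t^2} \le \frac{q(q-1)}{t(t-1)} = \binom{q}{t}/\binom{q-2}{t-2}$. That gives $e(F)\binom{q-2}{t-2} < \binom{q}{t}$, which only yields a positive number of independent sets, not $\tfrac12\binom{q}{t}$.

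So the honest main obstacle is squeezing out the factor $1/2$: I need $e(F)\binom{q-2}{t-2} \le \tfrac12\binom{q}{t}$, i.e. $e(F) \le \tfrac12 \cdot \frac{q(q-1)}{t(t-1)}$. From the hypothesis $e(F) < \frac{q^2}{2t^2}$ it suffices that $\frac{q^2}{2t^2} \le \frac{q(q-1)}{2t(t-1)}$, i.e. $\frac{q}{t^2} \le \frac{q-1}{t(t-1)}$, i.e. $q(t-1) \le t(q-1)$, i.e. $qt - q \le qt - t$, i.e. $t \le q$. Thus the inequality $e(F)\binom{q-2}{t-2} \le \tfrac12\binom{q}{t}$ holds whenever $t \le q$; and when $t > q$ the statement is vacuous since $\binom{q}{t} = 0$ (and indeed $F$ trivially has no $t$-sets at all, so both sides are $0$ and the claim holds). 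I would therefore structure the proof as: (i) dispose of the case $t > q$ trivially; (ii) for $t \le q$, bound non-independent $t$-sets by $e(F)\binom{q-2}{t-2}$; (iii) use the ratio identity and the elementary inequality $q(t-1)\le t(q-1)$ for $t\le q$ together with the hypothesis to conclude $e(F)\binom{q-2}{t-2} < \tfrac12\binom{q}{t}$, hence $F$ has more than $\tfrac12\binom{q}{t}$ independent $t$-sets. No step here is genuinely hard; the only thing to watch is getting the constant exactly right, which the inequality $t \le q$ delivers.
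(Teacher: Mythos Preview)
Your approach is correct and is essentially identical to the paper's proof: both bound the number of non-independent $t$-sets by $e(F)\binom{q-2}{t-2}$ and then use the ratio $\binom{q-2}{t-2}/\binom{q}{t} = \tfrac{t(t-1)}{q(q-1)} \le \tfrac{t^2}{q^2}$ together with the hypothesis. Your treatment is in fact slightly more careful, since you explicitly note that this ratio inequality requires $t \le q$ and dispose of the case $t > q$ separately, whereas the paper applies the inequality without comment.
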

\begin{proof}
	The number of subsets of $V(F)$ of size $t$ that are not independent is upper bounded by \[ e(F) {q-2\choose t-2} \leq e(F) \left( \frac{t^2}{q^2}\right){q\choose t} < \frac{1}{2} {q \choose t} \] since $e(F) < q^2/2t^2$.
\end{proof}
\noindent We are now ready to prove Theorem~\ref{thm:StrongerBreakerWinCriterion}. Let $k \geq 2$ and $t > (2k)^k$ be given and let
\begin{equation*}
	q > 4 \max \left( \left( (2 \, v(\mH))^{1/t}\Delta_1(\mH) \; ke \right)^{\frac{1}{k-1}}\; ,\; 2 k^2 t^3 \left(\max\limits_{2\leq \ell \leq k-1} \left( \Delta _{\ell}(\mH) ((tk)^{tk} \; k^t \; v(\mH)^2)^{\frac{k}{t^{1/k}}} \right)^{\frac{1}{k-\ell}} +2 \right) \right).
\end{equation*}
Breaker will play according to the following three strategies, splitting his bias as $q = q/2 + q/4 + q/4$. Note that in case Breaker does not need all his moves to play according to one of the strategies, he plays them arbitrarily, which cannot hurt him.
\begin{itemize}
	\item[\emph{SB1:}]Using $q/4$ moves, he will play according to Lemma~\ref{lem:fans} and thus preventing Maker from occupying $1/2 \, {q/4  \choose t}$ simple $t$--fans.
	\item[\emph{SB2:}]Using $\overline{q} =\max_{2\leq \ell \leq k-1} \Big( \Delta_{\ell}(\mH) \left( (tk)^{tk} k^t \, v(\mH)^2 \right)^{k/t^{1/k}} \Big)^{1/(k-\ell)}  + 1 < q/4$ moves, he will play according to Lemma~\ref{lemma:dangerousflowers} and hence preventing dangerous $t(\overline{q}+1)$--flowers from appearing.
	\item[\emph{SB3:}]Using $q/2$ moves, Breaker will occupy all open elements of any dangerous almost complete solution.
\end{itemize}

First of all note that Maker can play according to \emph{SB1} and \emph{SB2} since
\begin{equation*}
	q/4 > \Big( (2 \, v(\mH))^{1/t}\Delta_1(\mH) \; ke \Big)^{1/(k-1)} \quad \text{and} \quad \overline{q} > \max_{2\leq \ell \leq k-1} \Big( 2k \; \Delta _{\ell}(\mH) \, \left( v(\mH)^2 \; (tk)^{tk} \right)^{\frac{k}{t^{1/k}}}\Big)^{1/(k-\ell)}.
\end{equation*}

We can combine these strategies due to Remark~\ref{rmk:biasdoubling} and will now prove by induction, that after each of Breaker's moves there is no dangerous almost complete solution. Clearly this implies that Breaker's strategy is indeed a winning strategy. Initially there is obviously no dangerous almost complete solution. So suppose the result is true in round $r-1$. In round $r$ Maker claims an element ($w$ say). Then every new dangerous almost complete solution must contain $w$. Therefore they all belong to the same dangerous fan (with common element $w$). In order to complete the inductive step, we have to show that the size of this dangerous fan is not more than $q/2$ as  Breaker can then occupy all open elements in this dangerous fan (SB3), which completes the inductive step. Indeed, using a bias of $q/2$ Breaker has a strategy that avoids dangerous $q/2$--fans at any point in the game.

Suppose Maker succeeds in occupying a dangerous $(q/2)$--fan $(H_1^{\circ},h_1), \dots , (H_{q/2}^{\circ},h_{q/2})$. Construct an auxiliary graph $F$ whose vertices are the almost complete solutions of this fan and an edge between $(H_i^{\circ},h_i)$ and $(H_j^{\circ},h_j)$ indicates that $|H_i^{\circ} \cap H_j^{\circ}| \geq 2$ where $1 \leq i < j \leq q/2$. Recall that using $\overline{q}$ moves according to \emph{SB2}, Breaker prevents dangerous $t(\overline{q}+1)$--flowers from appearing. Therefore the maximum degree in $F$ is bounded by $\Delta(F) \leq (t(\overline{q}+1)-2){k-1 \choose 2} \leq t(\overline{q}+1)k^2$	 and hence $e(F) \leq \frac{1}{2} \frac{q}{2}t(\overline{q}+1)k^2 < \frac{1}{2} \frac{(q/2)^2}{t^2}$ by choice of $q$. Therefore, by Lemma~\ref{lemma:auxiliary}, $F$ has at least $\frac{1}{2} {q/4\choose t}$ independent sets of size $t$. But that means that Maker occupied $\frac{1}{2} {q/4 \choose t}$ simple $t$--fans contradicting SB1. This establishes the claim that Breaker has a strategy that avoids dangerous $q/2$--fans and finishes the proof. \hfill $\square$

\section{Proof of Theorem~\ref{thm:ThresholdGeneralizedVdWGames-Proper} -- Rado Games} \label{sec:vdWGames}

The goal of this section is to prove the statement in Theorem~\ref{thm:ThresholdGeneralizedVdWGames-Proper}, that is to show that the threshold bias of the Maker-Breaker $(A,\bb)$--game on $[n]$ satisfies $q(\mS_0(A,\bb,n)) = \Theta \left( n^{1/m_1(A)} \right)$ for a given positive and abundant matrix $A \in \ZZ^{r\times m}$ and vector $\bb \in \ZZ^r$. We start by establishing some preliminary results regarding linear systems of equations. We then obtain Maker's strategy through an application of Theorem~\ref{cor:MakerWinCriterion} and Breaker's strategy through an application of Theorem~\ref{cor:BreakerWinCriterion}. At the end of this section, we will also provide a proof for the easier statement regarding non-abundant matrices in Proposition~\ref{prop:ThresholdEasyGeneralizedVdWGames} and discuss the effect that solutions with repeated components of solutions have on the game.

\subsection{Preliminaries for Linear Systems}

We start with a couple of simple observation. If $A_1, A_2\in \ZZ^{r \times m}$ and $\bb_1, \bb_2 \in \ZZ^r$ are such that the solution sets are identical, that is $S(A_1,\bb_1) = S(A_2,\bb_2)$, then the corresponding game hypergraphs are of course also identical. Hence for any  invertible matrix $P\in \ZZ^{r\times r}$, the $(A,\bb)$--game on $[n]$ is the same as the $(P \! \cdot \! A,P \cdot \bb)$--game on $[n]$ for any $n\in \NN$. Furthermore, if $A$ was abundant then so is $P \! \cdot \! A$. In particular, applying elementary row operations (multiplying a row by a non-zero constant, adding a row to another row, switching two rows) to any matrix does not change the solution space or the underlying game, and does not make an abundant matrix non-abundant.

We will now continue by giving two basic bounds for the number of proper solutions. Given any matrix $A \in \ZZ^{r \times m}$ and vector $\bb \in \ZZ^r$, we remark that we have the upper bound
	\begin{equation} \label{eq:trivialupperbound}
		\big| \, S_0(A,\bb) \cap [n]^m \big| \leq \big| \, S(A,\bb) \cap [n]^m \big| \leq n^{m - \rank{A}}.
	\end{equation}
Indeed, taking a subset $Q\subseteq [m]$ of the column indices with $\rank{A} = |Q| = \rank{A^Q}$ and setting the $m - \rank{A}$  entries in ${\widebar{Q}}$ of a solution $\bx \in S(A, 0)$ arbitrarily, the entries in $Q$ are determined uniquely.
	
The following lemma, the proof of which is based on a construction by Janson and Ruci\'nski~\cite{JR11}, establishes that (\ref{eq:trivialupperbound}) is tight up to a constant factor.
\begin{lemma} \label{lemma:solutionslowerbound}
	For every positive and abundant matrix $A \in \ZZ^{r\times m}$ and vector $\bb \in \ZZ^r$ such that $S(A,\bb) \neq \emptyset$ there exist constants $c_0 = c_0(A,\bb) > 0$ and $n_0 = n_0(A,\bb) \in \NN$ such that for every $n \geq n_0$
	\begin{equation*}
		| \, S_0(A,\bb) \cap [n]^m| \geq c_0 \, n^{m-\mathrm{rk}(A)}.
	\end{equation*}
\end{lemma}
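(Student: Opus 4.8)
The plan is to establish the lower bound by an explicit parametric construction, following the approach of Janson and Ruci\'nski. Fix a subset $Q \subseteq [m]$ of column indices realizing the rank of $A$, so that $|Q| = \rank{A}$ and the columns indexed by $Q$ are linearly independent over $\QQ$. After applying elementary row operations (which do not change the game or abundance, as noted above) we may assume $A$ is in a convenient echelon-type form with respect to $Q$, so that each equation can be solved for one ``pivot'' variable $x_j$, $j \in Q$, as an affine-linear combination of the $m - \rank{A}$ ``free'' variables indexed by $\widebar{Q}$ and the entries of $\bb$. The idea is to let the free variables range over a suitably chosen arithmetic-progression-like grid of size $\Theta(n)$ in each coordinate, so that all pivot variables automatically land in $[n]$ as well, and then to argue that a positive proportion of these $\Theta(n^{m-\rank{A}})$ choices actually give \emph{proper} solutions (all entries distinct).

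First I would use positivity of $A$: since $S(A,\mathbf{0}) \cap \NN^m \neq \emptyset$, there is a strictly positive integer homogeneous solution $\bw \in \NN^m$, and since $S(A,\bb) \neq \emptyset$ there is some particular (possibly non-positive, possibly improper) solution $\bv \in \ZZ^m$. Then for every scalar $t$, $\bv + t\bw \in S(A,\bb)$. More generally I would take a basis $\bw^{(1)}, \dots, \bw^{(m-\rank{A})}$ of the homogeneous solution space $S(A,\mathbf{0})$, consider $\bx(t_1,\dots,t_{m-\rank{A}}) = \bv + N\bw + \sum_i t_i \bw^{(i)}$ for a large fixed shift $N$, and let each $t_i$ range over an interval of length $\Theta(n)$ chosen so that all coordinates of $\bx$ stay in $[n]$; the strictly positive $\bw$ guarantees that such a window of linear size exists once $n$ is large. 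This produces $\Theta(n^{m-\rank{A}})$ integer points of $S(A,\bb) \cap [n]^m$, and distinct parameter tuples give distinct points because the $\bw^{(i)}$ are linearly independent.

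The main obstacle is ensuring \emph{properness}: I must show that only an $o(1)$-fraction (in fact a bounded-away-from-$1$ fraction suffices, but one can get $o(1)$) of these points have two equal coordinates $x_p = x_q$. For each fixed pair $p \neq q$, the locus $x_p = x_q$ is an affine subspace in the parameter space $(t_1,\dots,t_{m-\rank{A}})$; it is a \emph{proper} subspace — i.e.\ cuts down the count by a factor $\Theta(1/n)$ — precisely unless $x_p \equiv x_q$ identically as functions of the $t_i$. This is exactly where \textbf{abundance} enters: I claim abundance (rank is preserved after deleting up to two columns, and hence in particular identifying two columns does not drop the rank) rules out $x_p \equiv x_q$ on the whole solution space, because such an identity would mean $S(A,\bb)$ lies in the hyperplane $x_p = x_q$, forcing the column $a_p - a_q$ to be a linear combination of the others in a way incompatible with $\rank{A^{\widebar{\{p\}}}} = \rank{A}$ or the analogous statement — here I would invoke, or reprove in a line, the fact that for abundant $A$ no coordinate is determined by an affine relation among the others (this is essentially the content that makes $m_1(A)$ well-defined, Lemma~\ref{lemma:welldefined}). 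Once each of the $\binom{m}{2}$ bad loci is shown to contain only $O(n^{m-\rank{A}-1})$ parameter tuples, a union bound removes them all and leaves $\Theta(n^{m-\rank{A}})$ proper solutions, giving the constant $c_0$. I would also need to double-check the degenerate corner cases $m - \rank{A} \in \{0,1\}$ separately, but abundance (which forces $m - \rank{A} \geq 2$, again by Lemma~\ref{lemma:welldefined} applied with $|Q| = m$) handles this cleanly.
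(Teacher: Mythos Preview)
Your approach is correct and constitutes a genuine alternative to the paper's argument. Both proofs build an affine family of solutions of the form $\text{(particular solution)} + \text{(large multiple of a positive homogeneous solution)} + \sum_i t_i \bw^{(i)}$ with the $t_i$ ranging over a box of side $\Theta(n)$, but they handle \emph{properness} differently.

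The paper first upgrades the positive homogeneous solution to a \emph{proper} one $\bx^*$ by a minimality argument: take a positive solution with the fewest pairs of equal entries, and show via abundance that none remain (if $x_1^*=x_2^*$ were forced for all positive solutions, then $e_1-e_2$ would lie in the row span of $A$, so deleting columns $1,2$ would drop the rank). Having a proper $\bx^*$, the paper scales it so strongly relative to the perturbations $\sum_i w_i\bx_i$ that every vector in the family inherits the strict coordinate ordering of $\bx^*$; hence every constructed solution is automatically proper and no deletion is needed.

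You instead keep an arbitrary (possibly improper) positive homogeneous shift and subtract the bad loci afterwards: for each pair $p\neq q$, abundance gives $e_p-e_q\notin\mathrm{rowspan}(A)$, hence some $\bz\in S(A,\mathbf{0})$ has $z_p\neq z_q$, so the affine form $x_p-x_q$ has nontrivial linear part in the $t_i$ and its zero set contains only $O(n^{m-\rank{A}-1})$ lattice points of the box; a union bound over $\binom{m}{2}$ pairs finishes. One small imprecision: your sentence about ``the column $a_p-a_q$ being a linear combination of the others'' and the reference to $\rank{A^{\widebar{\{p\}}}}$ is not quite the right formulation --- the clean statement is the row-span one above (equivalently, $r_{\{p,q\}}=0$ from Lemma~\ref{lemma:welldefined}), and once stated that way the argument goes through. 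Your route is more routine and avoids the minimality trick; the paper's route is more constructive in that it exhibits an explicit family consisting entirely of proper solutions.
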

\begin{proof}
	We need to construct many {\em proper} positive solutions to $A \cdot \bold{x} = \bold{b}$. First we prove that there exists at least one to the homogenous system. Since $A$ is positive, we can take a positive solution $\bx^*  = (x_1^*,\dots,x_m^*)\in S(A,\bold{0}) \cap \NN^m$ that has the minimum number of pairs of equal entries. We claim that $\bx^*$ is proper.

	Assume to the contrary that there are two column indices (say $1$ and $2$), such that the corresponding entries of $\bx^*$ are equal, that is $x^*_1=x^*_2$. If there was a solution $\bold{y} \in S(A,\bold{0}) \cap \NN^m$ with $y_1 \neq y_2$, then consider a solution
	\begin{equation*}
		\bold{w} = \bx^* + \alpha \, \bold{y} \in S(A,\bold{0}) \cap \NN^m,	
	\end{equation*}
	where $\alpha \in \NN \setminus \{ (x_r^* - x_s^*)/(y_s-y_r) : 1\leq r, s \leq m, y_s\neq y_r \}$ is chosen arbitrarily. By the definition of $\alpha$, for all $r, s$ with $x_r^* \neq x_s^*$ we also have $w_r\neq w_s$, furthermore $w_1\neq w_2$. Consequently $\bold{w}$ has less pairs of equal entries than $\bx^*$, contradicting the choice of $\bx^*$.

	If $\bx^*$ is not proper, then for every positive solution $\by \in S(A,\bold{0}) \cap \NN^m$ we must have $y_1 = y_2$. This implies that the vector $(1,-1, 0, \ldots , 0)$ is orthogonal to {\em every} vector in $S(A,\bold{0})$, so it is a (rational) linear combination of the rows of $A$. Consequently some  row in $A$ can be replaced by a row $(c,-c, 0, \ldots, 0)$ where $c\in \ZZ\setminus \{0 \}$, using elementary row operations. Such a matrix is not abundant though, since the deletion of the first two columns reduces its rank, a contradiction.

	It follows that the positive solution $\bx^*  \in S_0(A,{\bold 0}) \cap \NN^m$ is proper. To construct many proper positive solutions to $A \cdot \bold{x} = \bold{b}$ in $[n]$, we choose a solution $\hat{\bx} = (\hat{x}_1, \dots, \hat{x}_m) \in S(A,\bb)$ as well as $m - \rank{A}$ linearly independent solutions $\bx_1, \dots \bx_{m-\rank{A}} \in S(A,{\bold 0})$. Let $s^*$ and $\hat{s}$ be the maximum absolute value of the entries of $\bx^*$ and $\hat{\bx}$, respectively, and $s$ the maximum absolute value of any entry in any of the vectors $\bx_1, \dots , \bx_{m-\rank{A}}$. Define $a(n) = \floor{n / (s^* + 1)}$ and set
	\begin{equation*}
		S(n) = \Bigg\{ \hat{\bx} + a(n) \, \bx^* + \sum_{i=1}^{m-\rank{A}} w_i \, \bx_i : w_i \in \ZZ, \, |w_i| < \frac{a(n)-2\hat{s}}{2s(m-\rank{A})} \Bigg\} \subseteq \ZZ^m.
	\end{equation*}
	Since $A\cdot \hat{\bx} = \bold{b}$ and $A\cdot \bx^* = A\cdot \bx_i = \bold{0}$, we have $S(n) \subseteq S(A,\bb)$. Let $\bx =(x_1, \dots, x_m) \in S(n)$ and observe that for $n$ large enough
	\begin{align*}
		x_i & > \hat{x}_i + a(n) \, {x}_{i}^* - (m-\rank{A}) \, s \, \frac{a(n) - 2\hat{s}}{2s(m-\rank{A})} \geq - \hat{s} + a(n) - \frac{a(n)}{2} +\hat{s} \geq 1
	\end{align*}
	as well as
	\begin{align*}
		x_i & < \hat{x}_i + a(n) \, x_{i}^* + (m-\rank{A}) \, s \, \frac{a(n) - 2\hat{s}}{2s(m-\rank{A})} \leq \hat{s} + n \, \frac{s^*}{s^* + 1} + n \, \frac{1}{2(s^* + 1)} - \hat{s} \leq n
	\end{align*}
	for every $i \in [m]$.  Consequently $S(n) \subseteq [n]^m$ for $n$ large enough.

	Now assume without loss of generality that $x_{1}^* < \dots < x_{m}^*$. It follows that
	\begin{align*}
		x_i & < \hat{x}_i + a(n) \, x_{i}^* + (m-\rank{A}) \, s \, \frac{a(n) - 2\hat{s}}{2s(m-\rank{A})} \\
		& = \left( \hat{x}_i - \hat{s} \right) + a(n) \left( x_{i}^* + \frac{1}{2} \right) \leq \left( \hat{x}_{i+1} + \hat{s} \right) + a(n) \left( x_{i+1}^* - \frac{1}{2} \right) \\
		& = \hat{x}_{i+1} + a(n) \, x_{i+1}^* - (m - \rank{A}) \, s \, \frac{a(n)-2\hat{s}}{2s(m-\rank{A})} < x_{i+1}
	\end{align*}
	for every $1 \leq i \leq m-1$, so $\bx$ is proper. Therefore $S(n) \subseteq S_0(A,\bb) \cap [n]^m$.

	Lastly observe that since $\bx_1, \dots \bx_{m-\rank{A}}$ are linearly independent, $S(n)$ contains
	\begin{equation*}
		\left( 2 \bigfloor{\frac{a(n) - 2\hat{s}}{2s(m-\rank{A})}}+1 \right)^{m-\rank{A}} \geq \left( \frac{1/(4 s^*+4)}{s \, \big( m-\rank{A} \big)} \, n \right)^{m-\rank{A}}
	\end{equation*}
	elements, where the lower bound holds for $n$ large enough. It follows that for
	\begin{equation*}
		c_0 = c_0(A,\bb) = \left( \frac{1/(4 \hat{s}_{\bold 0}+4)}{s \, \big( m-\rank{A} \big)} \right)^{m-\rank{A}} < 1
	\end{equation*}
	and $n_0 = \ceil{4 \, \hat{s} \, (\hat{s}_{\bold 0} + 1)}$ we have $| \, S_0 (A,\bb) \cap [n]^m| \geq c_0 \, n^{m-\rank{A}}$ for all $n \geq n_0$.
\end{proof}

\medskip

Recall the definitions from the introduction, especially $r_Q = r_Q(A) = \rank{A} - \rank{A^{\widebar{Q}}}$ as well as the definition of $m_1(A)$. We also introduce the additional notation that for any matrix $A \in \ZZ^{r \times m}$ and selection of row indices $R \subseteq [r]$, we let $A_R$ denote the matrix obtained by only keeping the rows indexed by $R$. Furthermore, a matrix $A$ is called {\em strictly balanced} if for every non-empty proper subset $Q \subsetneq [m]$ we have that
\begin{equation}
	\frac{|Q|-1}{|Q|-r_Q-1} < m_1(A).
\end{equation}

The following lemma now develops the notion of an \emph{induced submatrix} originally introduced (though not explicitly referred to as such) by Rödl and Ruci\'nski~\cite{RR97} for partition regular matrices. Their proofs, adapted for the full generality of abundant matrices and the inhomogeneous case, are included here for completeness.
	
\begin{lemma} \label{lemma:subsystemobservations}
	For every matrix $A \in \ZZ^{r\times m}$ and set of column indices $Q \subseteq [m]$ satisfying $r_Q  > 0$ there exists an invertible matrix $P \in \ZZ^{r\times r}$ such that the submatrix
	\begin{equation}
		(P \! \cdot \! A)^Q_{[r_Q]} = B(P,A,Q) = B \in \ZZ^{r_Q\times|Q|}
	\end{equation}
	is of rank $r_Q$ while the submatrix $(P \! \cdot \! A)^{\widebar{Q}}_{[r_Q]}$ is of rank $0$. For every such $P$ the following hold:
	\begin{itemize} \setlength\itemsep{0em}
		\item[(i)] We have $\rank{(P \!\cdot\! A)^{\widebar{Q}}_{[r]\setminus [r_Q]}} = \mathrm{rk}(A)- r_Q$.
		\item[(ii)] If $A$ is abundant then $B$ is abundant.
		\item[(iii)] For any vector $\bb \in \ZZ^r$  and any solution $\bx \in S(A, \bb)$ we have that $\bx^Q \in S(B, \bc)$, where $\bc = \bc(P,A,Q, \bb) =  (P \cdot \bb)_{[r_Q]} \in \ZZ^{r_Q}$. In particular, if $A$ is positive then so is $B$.
		\item[(iv)] For any $Q' \subseteq \{1, ..., |Q| \}$ there exists $Q'' \subseteq Q$ such that $|Q''| = |Q'|$ and $r_{Q''}(A) = r_{Q'}(B)$.
	\end{itemize}
\end{lemma}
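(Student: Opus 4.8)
The plan is to phrase everything in terms of the rational row space $\mathrm{Row}(A) \subseteq \QQ^m$. This is natural because left multiplication by an invertible $P \in \ZZ^{r\times r}$ leaves $\mathrm{Row}(A)$ — and hence the rank of every column-submatrix of $A$ — unchanged, while letting us ``straighten out'' the columns in $Q$. The requirement that $(P\cdot A)^{\widebar Q}_{[r_Q]}$ have rank $0$ simply says that the first $r_Q$ rows of $P\cdot A$ are supported on the columns of $Q$, i.e.\ that the first $r_Q$ rows of $P$ lie in the left null space of $A^{\widebar Q}$. For such a row $p$, the vector $p\cdot A$ lies in $W := \mathrm{Row}(A) \cap \QQ^Q$, and $W$ has dimension exactly $r_Q$: it is the kernel of the coordinate projection $\mathrm{Row}(A) \to \QQ^{\widebar Q}$, whose image is $\mathrm{Row}(A^{\widebar Q})$ of dimension $\rank{A}-r_Q$. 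The map $p \mapsto p\cdot A$ carries the left null space of $A^{\widebar Q}$ \emph{onto} $W$, so I would pick $p_1,\dots,p_{r_Q}$ there whose images form a basis of $W$; these rows are automatically linearly independent, I extend them to a basis $p_1,\dots,p_r$ of $\QQ^r$, scale each row to clear denominators (this keeps the determinant nonzero), and let $P$ be the resulting integer matrix. Then $(P\cdot A)^{\widebar Q}_{[r_Q]}=0$ by construction, and the rows of $B=(P\cdot A)^Q_{[r_Q]}$ span $W$ under the natural identification $\QQ^Q\cong\QQ^{|Q|}$, so $\rank{B}=r_Q$.

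The same observation gives the one structural fact I need for an \emph{arbitrary} $P$ with the stated property: its first $r_Q$ rows are supported on $Q$ and lie in $W$, and since they span a space of dimension $\rank{B}=r_Q=\dim W$, they span all of $W$; hence $\mathrm{Row}(B)=W$ (again under $\QQ^Q\cong\QQ^{|Q|}$). Granting this, \emph{(i)} is immediate: $(P\cdot A)^{\widebar Q}$ has rank $\rank{A^{\widebar Q}}=\rank{A}-r_Q$ and its top $r_Q$ rows vanish, so this rank is carried entirely by the rows indexed by $[r]\setminus[r_Q]$. Statement \emph{(iii)} is also immediate: applying $P$ to $A\cdot\bx^T=\bb^T$ and keeping the first $r_Q$ rows, which are supported on $Q$, gives $B\cdot(\bx^Q)^T = ((P\cdot\bb)_{[r_Q]})^T = \bc^T$; specializing $\bx$ to a positive homogeneous solution then shows $B$ is positive.

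For \emph{(iv)} I would reduce both deficiencies to intersection dimensions. For a column set $Q''\subseteq Q$ identified with $Q'\subseteq\{1,\dots,|Q|\}$, the rank–nullity identity for coordinate projections gives $r_{Q''}(A)=\dim(\mathrm{Row}(A)\cap\QQ^{Q''})$ inside $\QQ^m$ and $r_{Q'}(B)=\dim(\mathrm{Row}(B)\cap\QQ^{Q''})$ inside $\QQ^{|Q|}$; since $\mathrm{Row}(B)=W=\mathrm{Row}(A)\cap\QQ^Q$ and $Q''\subseteq Q$, these two intersections coincide, which is exactly \emph{(iv)}. Then \emph{(ii)} is a formal consequence: unwinding the definition (take $Q=\widebar D$), $A$ is abundant iff $\rank{A}>0$ and $r_D(A)=0$ for every column set $D\subseteq[m]$ with $|D|\le 2$; given such a $D$ viewed inside $B$, \emph{(iv)} produces $D''\subseteq[m]$ with $|D''|=|D|\le 2$ and $r_{D''}(A)=r_D(B)$, which is $0$ by abundance of $A$, while $\rank{B}=r_Q>0$ by hypothesis, so $B$ is abundant.

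The step I expect to demand the most care is the construction of $P$ over $\ZZ$ while simultaneously enforcing both rank conditions, together with the bookkeeping in \emph{(iv)}: keeping the bijection $Q\leftrightarrow\{1,\dots,|Q|\}$ consistent and applying the projection/intersection dimension formulas in the correct ambient spaces ($\QQ^m$ for $A$, $\QQ^{|Q|}$ for $B$). Everything else is routine linear algebra once the identity $\mathrm{Row}(B)=\mathrm{Row}(A)\cap\QQ^Q$ is established.
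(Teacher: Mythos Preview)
Your argument is correct and is organized around a single structural observation that the paper does not isolate: identifying $\mathrm{Row}(B)$ with $W=\mathrm{Row}(A)\cap \mathbb{Q}^Q$ and using the rank--nullity identity $r_S=\dim(\mathrm{Row}\cap \mathbb{Q}^S)$ to read off \emph{(iv)} in one line, from which \emph{(ii)} follows formally via the reformulation ``abundant $\Leftrightarrow$ $\rank{\,\cdot\,}>0$ and $r_D=0$ for all $|D|\le 2$''. The paper instead argues more concretely: it builds $P$ by explicit Gaussian elimination over $\ZZ$ (zeroing out the $\widebar Q$-columns of $r-\rank{A^{\widebar Q}}$ rows and permuting $r_Q$ independent ones to the top), proves \emph{(ii)} directly from the block decomposition $\rank{A}=\rank{B}+\rank{A^{\widebar Q}}$ without appealing to \emph{(iv)}, and proves \emph{(iv)} by an ad-hoc basis bookkeeping argument that tracks how a specific basis of the row space of $P\cdot A$ behaves upon deleting the columns $Q''$. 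Your route is shorter and more uniform; the paper's is more explicit about the row operations and keeps \emph{(ii)} logically independent of \emph{(iv)}. Both rely on the same underlying linear algebra, and your construction of $P$ (choose preimages of a basis of $W$ in the left null space of $A^{\widebar Q}$, extend to a rational basis of $\mathbb{Q}^r$, scale rows to integers) is a valid alternative to the paper's elimination.
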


Note that, for each $A$ and $Q$ satisfying these properties, there can
of course exist multiple $P$, but for the remainder of the paper we will fix one particular such $P = P(A,Q)$ and denote $B(P,A,Q)$ by $B(A,Q)$ as well as $\bc(P,A,Q,\bb)$ by $\bc(A,Q, \bb)$. The following block decomposition demonstrates the situation for $Q = \{1, \ldots , |Q|\}$:
	\begin{equation} \label{eq:subsystemillustration}
		P \!\cdot\! A = \left( \! \! \! \begin{array}{c}
			\begin{array}{cc} B & \mathbf{0} \\ 
				X & Y 
			\end{array}
			\end{array} \! \! \! \right)
			\begin{array}{l}
				\big]\ r_Q \textcolor{white}{\big|} \\
				\big]\ r - r_Q \textcolor{white}{\big|} 
			\end{array}
	\end{equation}

\begin{proof}
	We construct $P$ by standard Gaussian elimination, using elementary row operations. We denote the rows of $A$ by $\ba_1, \dots, \ba_r$. Among the rows $\ba_1^{\widebar{Q}}, \dots, \ba_r^{\widebar{Q}}$ of $A^{\widebar{Q}}$ we choose $\rank{A^{\widebar{Q}}}$ linearly independent vectors and express each of the remaining $r - \rank{A^{\widebar{Q}}}$ vectors as a rational linear combination of this basis. Multiplying with the denominators we create integer linear combinations for each of these rows and then we perform the corresponding elementary row operations for each row in $A$. This turns each entry in the $\widebar{Q}$--columns of these $r - \rank{A^{\widebar{Q}}}$ rows into a $0$. Hence the dimension of these rows must be $\rank{A} - \rank{A^{\widebar{Q}}} = r_Q$. We identify a set of $r_Q$ linearly independent rows and permute them to the top of the matrix, hence obtaining the promised block decomposition (\ref{eq:subsystemillustration}). Note that the rank of $B$ is $r_Q$ by construction.

	To prove \emph{(i)}, note that $\rank{(P \!\cdot\! A)^{\widebar{Q}}}_{[r]\setminus [r_Q]}) =  \rank{(P \!\cdot\! A)^{\widebar{Q}}} = \rank{A^{\widebar{Q}}} = \rank{A} -r_Q$ by the definition of $r_Q$.

	By \emph{(i)}, we now have $\rank{A}= \rank{P \!\cdot\! A}= \rank{B} + \rank{A^{\widebar{Q}}}$, so if deleting some two columns of $B$ decreases its rank then deleting the same columns of $P \!\cdot\! A$ decreases its rank. Hence if $B$ is not abundant then $P \!\cdot\! A$ and consequently also $A$ are not abundant. Thus \emph{(ii)} follows.

	To prove \emph{(iii),} note that from (\ref{eq:subsystemillustration}) it follows that for any solution $\bx \in S(A,\bb)= S(P \!\cdot\! A, P \cdot \bb)$ we have that $(P \cdot \bb)_{[r_Q]}  = (P \!\cdot\! (A \cdot \bx) )_{[r_Q]} =  (P \!\cdot\! A)_{[r_Q]} \cdot \bx = B \cdot \bold{x}^Q$, since $(P \!\cdot\! A)^Q_{[r_Q]} = B$ and $(P \!\cdot\! A)^{\widebar{Q}}_{[r_Q]}$ is the $0$--matrix. Therefore $\bold{x}^Q \in S(B, \bc)$. The second statement follows by noting that $\bc(P,A,Q,\bb) = \bold{0}$ provided that $\bb = \bold{0}$.
	
	Lastly, for \emph{(iv)},	let us assume without loss of generality that the columns are permuted such that $Q = \{1, \dots, |Q|\}$ so that we may simply choose $Q'' = Q'$. From \emph{(i)} we know that we can choose a basis of the vectors space generated by the rows of $A$ that consists of $r_Q$ rows $\bold{r}_1, \dots, \bold{r}_{r_Q}$ from $(P \!\cdot\! A)_{[r_Q]}$ and $\rank{A^{\widebar{Q}}}$ rows $\bold{r}_{r_Q+1}, \dots, \bold{r}_{\rank{A}}$ from $(P \!\cdot\! A)_{[r] \backslash [r_Q]}$.	 By construction the vectors $\bold{r}_{r_Q+1}^{\widebar{Q}}, \dots, \bold{r}_{\rank{A}}^{\widebar{Q}}$ are linearly independent from each other, so since $Q'' \subseteq Q$ the vectors $\bold{r}_{r_Q+1}^{\widebar{Q''}}, \dots, \bold{r}_{\rank{A}}^{\widebar{Q''}}$ are as well, implying $\rank{(P \!\cdot\! A)^{\widebar{Q''}}_{[r] \backslash [r_Q]}} = \rank{(P \!\cdot\! A)^{\widebar{Q}}}$. Note that again by construction any linear combination of the vectors $\bold{r}_{1}^{\widebar{Q''}}, \dots, \bold{r}_{r_Q}^{\widebar{Q''}}$ has the last $|\widebar{Q}|$ entries equal to zero and hence cannot be expressed as a linear combination of $\bold{r}_{r_Q+1}^{\widebar{Q''}}, \dots, \bold{r}_{\rank{A}}^{\widebar{Q''}}$, as $\bold{r}_{r_Q+1}^{\widebar{Q}}, \dots, \bold{r}_{\rank{A}}^{\widebar{Q}}$ were linearly independent. It follows that we can add $\rank{(P \!\cdot\! A)^{\widebar{Q''}}_{[r_Q]}}= \rank{B^{\widebar{Q'}}}$ linearly independent vectors from $\bold{r}_{1}^{\widebar{Q''}}, \dots, \bold{r}_{r_Q}^{\widebar{Q''}}$ to the $\rank{(P \!\cdot\! A)^{\widebar{Q}}} = \rank{A^{\widebar{Q}}}$ linearly independent vectors $\bold{r}_{r_Q+1}^{\widebar{Q''}}, \dots, \bold{r}_{\rank{A}}^{\widebar{Q''}}$ to form a basis of the row space of $\rank{A^{\widebar{Q''}}} $. This implies that $\rank{A^{\widebar{Q''}}} = \rank{A^{\widebar{Q}}} + \rank{B^{\widebar{Q'}}}$ from which we can conclude that
	\begin{equation*}
		r_{Q''}(A) = \rank{A} - \rank{A^{\widebar{Q''}}} = r_Q + \rank{A^{\widebar{Q}}} - \rank{A^{\widebar{Q''}}} = \rank{B} - \rank{B^{\widebar{Q'}}} = r_{Q'}(B)
	\end{equation*}
	as desired.
\end{proof}

The following corollary to this lemma will allow us to handle the case of Breaker's strategy when the given matrix is not strictly balanced.

\begin{corollary} \label{cor:strictsubsystem}
	If $A \in \ZZ^{r \times m}$ is positive and abundant then there exists some non-empty set of column indices $Q \subseteq [m]$ such that $B = B(A,Q)$ is abundant, positive, strictly balanced and satisfies $m_1(B) = m_1(A)$. Furthermore, for $\bc = \bc(A,Q,\bb)$ any subset $T \subseteq \NN$ such that $S_0(B,\bc) \cap T^m = \emptyset$ also satisfies $S_0(A,\bb) \cap T^m = \emptyset$.
\end{corollary}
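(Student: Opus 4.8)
The plan is to pass to an extremal subsystem, mirroring the classical argument that extracts a strictly balanced subgraph, and to transfer all the needed properties through Lemma~\ref{lemma:subsystemobservations}. First I would observe that abundance forces $m_1(A)>1$: applying the rank condition to a set of $m-2$ columns gives $1\le\rank{A}\le m-2$, so plugging $Q=[m]$ into the definition of $m_1(A)$ yields $m_1(A)\ge (m-1)/(m-\rank{A}-1)\ge (m-1)/(m-2)>1$. The maximum in the definition of $m_1(A)$ ranges over the finite, nonempty family of column sets of size at least $2$, hence is attained; let $Q$ be a maximiser of \emph{minimum cardinality}. Since the maximal value exceeds $1$ we must have $r_Q>0$ (otherwise the ratio at $Q$ would equal $1$), so Lemma~\ref{lemma:subsystemobservations} is applicable to $Q$; set $B=B(A,Q)$ and $\bc=\bc(A,Q,\bb)$. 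Parts \emph{(ii)} and \emph{(iii)} of that lemma immediately give that $B$ is abundant and positive, so in particular $m_1(B)$ is well-defined.

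For the equality $m_1(B)=m_1(A)$ and strict balancedness, I would argue as follows. The full column set of $B$ realises the ratio $(|Q|-1)/(|Q|-\rank{B}-1)$, and $\rank{B}=r_Q$ by construction, so the value $m_1(A)=(|Q|-1)/(|Q|-r_Q-1)$ appears among the terms defining $m_1(B)$; thus $m_1(B)\ge m_1(A)$. Conversely, for any column set $Q'$ of $B$ with $|Q'|\ge 2$, Lemma~\ref{lemma:subsystemobservations}\emph{(iv)} supplies a set $Q''\subseteq Q$ with $|Q''|=|Q'|$ and $r_{Q''}(A)=r_{Q'}(B)$, so the ratio of $Q'$ inside $B$ equals the ratio of $Q''$ inside $A$, which is at most $m_1(A)$; maximising over $Q'$ gives $m_1(B)\le m_1(A)$, hence equality. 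Moreover, if $Q'$ were a \emph{proper} subset of the columns of $B$ whose ratio equalled $m_1(B)=m_1(A)$, then the corresponding $Q''$ would be a maximiser of $m_1(A)$ with $2\le |Q''|=|Q'|<|Q|$, contradicting the minimality of $|Q|$; and for $|Q'|\le 1$ the ratio is $0<m_1(B)$. Therefore $B$ is strictly balanced.

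It remains to prove the solution-free transfer. Suppose $\bx=(x_1,\dots,x_m)\in S_0(A,\bb)$ has all of its entries in some set $T\subseteq\NN$. By Lemma~\ref{lemma:subsystemobservations}\emph{(iii)} the restriction $\bx^Q$ lies in $S(B,\bc)$; its entries form a subcollection of the pairwise distinct $x_1,\dots,x_m$, so $\bx^Q$ is a \emph{proper} solution and all its entries lie in $T$, i.e.\ $\bx^Q\in S_0(B,\bc)\cap T^{|Q|}$. Taking the contrapositive, $S_0(B,\bc)\cap T^{|Q|}=\emptyset$ forces $S_0(A,\bb)\cap T^m=\emptyset$, which is the remaining claim.

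The only genuinely delicate step is the second one: one has to use Lemma~\ref{lemma:subsystemobservations}\emph{(iv)} to see not merely that the $1$-density of the full column set of $B$ is preserved, but that \emph{every} proper sub-density strictly drops, which is exactly where the minimality of $|Q|$ enters; one also has to check at the outset that the chosen minimal maximiser satisfies $r_Q>0$, since otherwise the induced-submatrix construction underlying $B(A,Q)$ would not even be defined.
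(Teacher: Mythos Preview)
Your proof is correct and follows essentially the same approach as the paper's own proof: choose a minimal-cardinality maximiser $Q$ for $m_1(A)$, invoke Lemma~\ref{lemma:subsystemobservations}\emph{(ii)}--\emph{(iv)} to transfer abundance, positivity, and the density comparisons to $B$, and derive strict balancedness from the minimality of $|Q|$. In fact you are more careful than the paper in two places: you explicitly verify $r_Q>0$ (required for $B(A,Q)$ even to be defined via Lemma~\ref{lemma:subsystemobservations}), and you spell out both the equality $m_1(B)=m_1(A)$ and the contrapositive argument for the solution-free transfer, which the paper leaves implicit.
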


\begin{proof}
	Choose $Q \subseteq [m]$ such that $(|Q|-1)/(|Q|-r_Q-1) = m_1(A)$ and $|Q|$ is minimal with this property. By~\emph{(ii)} and~\emph{(iii)} we know that $B$ is abundant and positive. Assume that there exists $Q' \subsetneq \{ 1, \dots, |Q| \}$ such that $(|Q'|-1)/(|Q'|-r_{Q'}(B)-1) \geq (|Q|-1)/(|Q|-r_Q-1)$. By \emph{(iv)} there must exist $Q'' \subseteq [m]$ with $|Q''| = |Q'| < |Q|$ such that $r_{Q''}(A) = r_{Q'}(B)$. It follows that $(|Q''|-1)/(|Q''|-r_{Q''}(A)-1) \geq m_1(A)$, giving us a contradiction to our choice of $Q$. Finally, the last statement readily follows from \emph{(iii)}.
\end{proof}

The following lemma now establishes some results regarding the rank of induced submatrices of abundant matrices. It also verifies that the maximum $1$--density parameter given in the introduction is indeed well-defined for abundant matrices. Rödl and Ruci\'nski~\cite{RR97} verified this for partition regular matrices. Here we provide a proof for abundant matrices.

\begin{lemma} \label{lemma:welldefined}
	For any abundant matrix $A \in \ZZ^{r\times m}$ and subset of column indices $Q \subseteq [m]$ the following holds. If $|Q| \geq 2$ then $|Q| - r_Q - 1 > 0$. If $|Q| \leq 2$ then $r_Q = 0$.
\end{lemma}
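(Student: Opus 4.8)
The plan is to derive both statements straight from the definition of abundance together with the elementary observation that appending a single column to an integer matrix increases its rank by at most one; no appeal to the earlier structural lemmas is needed.

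First I would treat the case $|Q| \le 2$. Here $\widebar{Q}$ consists of at least $m-2$ of the columns of $A$, so abundance gives $\rank{A^{\widebar{Q}}} = \rank{A}$ and hence $r_Q = \rank{A} - \rank{A^{\widebar{Q}}} = 0$, which is exactly the second assertion. In particular, for $|Q| = 2$ this already yields $|Q| - r_Q - 1 = 1 > 0$.

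For the first assertion I would prove the slightly stronger bound $r_Q \le |Q| - 2$ whenever $|Q| \ge 2$, which is plainly equivalent to $|Q| - r_Q - 1 \ge 1 > 0$, by induction on $|Q|$. The base case $|Q| = 2$ is precisely what was just shown. For the inductive step, let $|Q| = q \ge 3$, fix any index $i \in Q$, and set $Q' = Q \setminus \{i\}$, so that $|Q'| = q - 1 \ge 2$ and $\widebar{Q'} = \widebar{Q} \cup \{i\}$. Since $A^{\widebar{Q'}}$ is obtained from $A^{\widebar{Q}}$ by adjoining one column, $\rank{A^{\widebar{Q'}}} \le \rank{A^{\widebar{Q}}} + 1$, and therefore $r_Q \le r_{Q'} + 1$. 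Combining this with the inductive hypothesis $r_{Q'} \le |Q'| - 2 = q - 3$ gives $r_Q \le q - 2$, completing the induction.

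I do not expect a genuine obstacle here; the only points requiring care are getting the direction of the rank inequality right — namely that passing from $Q$ to $Q \setminus \{i\}$ \emph{adds} (rather than removes) the column indexed by $i$ to $A^{\widebar{Q}}$, so that $r_Q$ can drop by at most one — and confirming that the base case genuinely lies in the regime $|\widebar{Q}| \ge m-2$ where abundance can be invoked. Degenerate choices such as $Q = [m]$ or $Q = \emptyset$ are subsumed by the same reasoning and need no separate treatment.
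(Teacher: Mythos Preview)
Your proof is correct. The second assertion ($|Q|\le 2 \Rightarrow r_Q=0$) is handled exactly as in the paper, straight from the definition of abundance. For the first assertion, however, your route differs from the paper's.

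The paper invokes the earlier structural Lemma~\ref{lemma:subsystemobservations}: the induced submatrix $B=B(A,Q)$ is abundant with $|Q|$ columns and rank $r_Q$, and any abundant matrix must have at least two more columns than its rank (since otherwise deleting two columns would force a rank drop), giving $|Q|\ge r_Q+2$ immediately. Your argument instead proceeds by induction on $|Q|$, using only the elementary fact that adjoining one column can raise the rank by at most one, so $r_Q\le r_{Q'}+1$ for $Q'=Q\setminus\{i\}$. Your approach is more self-contained---it does not rely on the Gaussian-elimination construction of $B(A,Q)$ and could in principle be read before Lemma~\ref{lemma:subsystemobservations}---while the paper's argument is shorter and more conceptual, recasting the inequality as ``the induced subsystem is itself abundant.'' Both yield the same bound $r_Q\le |Q|-2$.
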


\begin{proof}
	By the Lemma \ref{lemma:subsystemobservations}, $B(A,Q)$ is abundant, has rank $r_Q$ and the number of its columns is $|Q|$. The first statement now follows, since for any abundant matrix the number of columns must be at least two more than its rank. Otherwise deleting any two columns the number of columns, and hence also the rank, would be strictly less than the old rank.

	If $|Q| \leq 2$ then, since $A$ is abundant, deleting the columns in $Q$ does not reduce the rank of $A$. Hence $\rank{A^{\widebar{Q}}} = \rank{A}$ and therefore $r_Q = 0$.
\end{proof}

\subsection{Proof of Rado Games statements}

Let us state some general observations regarding the distribution of edges in the hypergraph $\mS_0(A,\bb,n)$ that will be used in applying both Maker's and Breaker's criterion.

\begin{lemma} \label{lemma:vdWdensity}
	For every positive and abundant matrix $A \in \ZZ^{r\times m}$ and vector $\bb \in \ZZ^r$ for which $\mS(A,\bb) \neq \emptyset$, we have
	\begin{equation*}
		\density{\mS_0(A,\bb,n)} = \Theta \big( n^{m-\rank{A}-1} \big).
	\end{equation*}
\end{lemma}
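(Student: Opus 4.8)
The plan is to compute $\density{\mS_0(A,\bb,n)} = e(\mS_0(A,\bb,n)) / v(\mS_0(A,\bb,n))$ by bounding numerator and denominator separately. The denominator is easy: the hypergraph $\mS_0(A,\bb,n)$ is built on the vertex set $[n]$ (after possibly discarding finitely many isolated vertices), so $v(\mS_0(A,\bb,n)) = \Theta(n)$. For the numerator I would bound the number of edges $e(\mS_0(A,\bb,n))$ from above and below by $\Theta(n^{m-\rank{A}})$. The upper bound is immediate from the trivial estimate~\eqref{eq:trivialupperbound}, namely $|S_0(A,\bb)\cap [n]^m| \le n^{m-\rank{A}}$, which bounds the number of \emph{ordered} proper solutions; since each (unordered) edge of size $m$ corresponds to at most $m!$ ordered proper solutions, we get $e(\mS_0(A,\bb,n)) \le n^{m-\rank{A}}$. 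The lower bound is exactly the content of Lemma~\ref{lemma:solutionslowerbound}: there is a constant $c_0 = c_0(A,\bb) > 0$ and $n_0$ such that $|S_0(A,\bb)\cap [n]^m| \ge c_0\, n^{m-\rank{A}}$ for all $n \ge n_0$. Each edge of $\mS_0(A,\bb,n)$ accounts for at least one ordered proper solution (in fact exactly $m!$, since properness means the $m$ entries are distinct), so $e(\mS_0(A,\bb,n)) \ge (c_0/m!)\, n^{m-\rank{A}}$.

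Combining these, $e(\mS_0(A,\bb,n)) = \Theta(n^{m-\rank{A}})$, and dividing by $v(\mS_0(A,\bb,n)) = \Theta(n)$ yields $\density{\mS_0(A,\bb,n)} = \Theta(n^{m-\rank{A}-1})$, as claimed. I should double-check the edge case where $m - \rank{A} - 1$ could be problematic: since $A$ is abundant, Lemma~\ref{lemma:welldefined} (applied with $Q=[m]$, noting $|Q| = m \ge 2$ because an abundant matrix has at least $\rank{A}+2 \ge 3$ columns) gives $m - \rank{A} - 1 > 0$, so the exponent is positive and the hypergraph genuinely grows; in particular there are unboundedly many edges and the quotient is well-behaved.

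The only real subtlety — and the place where one must be slightly careful rather than where there is a genuine obstacle — is making sure that passing between ordered proper solutions and unordered edges only costs the constant factor $m!$, and that $v(\mS_0(A,\bb,n)) = \Theta(n)$ really holds (one needs that for large $n$ a positive proportion of points in $[n]$ lie in some proper solution, which again follows from the lower-bound construction in Lemma~\ref{lemma:solutionslowerbound}, or one can simply note $v \le n$ for the upper bound on density and $v \ge 1$ trivially, then sharpen as needed). Since both the heavy-lifting bounds (Lemma~\ref{lemma:solutionslowerbound} and~\eqref{eq:trivialupperbound}) are already established, this lemma is essentially a two-line corollary and I do not anticipate any serious difficulty.
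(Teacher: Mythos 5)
Your proposal is correct and follows the paper's proof essentially verbatim: the upper bound on $e(\mS_0(A,\bb,n))$ comes from~\eqref{eq:trivialupperbound}, the lower bound from Lemma~\ref{lemma:solutionslowerbound}, the two are related to the edge count via the factor $m!$ between ordered proper solutions and unordered edges, and one divides by $v(\mS_0(A,\bb,n)) = n$. The only (harmless) slip is that your justifications for the two directions are swapped: the upper bound $e(\mS_0(A,\bb,n)) \leq |S_0(A,\bb) \cap [n]^m|$ uses that every edge arises from \emph{at least one} ordered proper solution, while the lower bound $e(\mS_0(A,\bb,n)) \geq |S_0(A,\bb) \cap [n]^m|/m!$ uses that every edge arises from \emph{at most} $m!$ of them (not "exactly $m!$", since a permutation of a proper solution need not itself solve the system).
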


\begin{proof}
	We observe that each edge in $\mS_0(A,\bb,n)$ can stem from at most $m!$ solutions in $S_0(A,\bb) \cap [n]^m$, so that we have
	\begin{equation*}
		| \, S_0(A,\bb) \cap [n]^m|/m! \leq e(\mS_0(A,\bb,n)) \leq | \, S_0(A,\bb) \cap [n]^m|.
	\end{equation*}
	Using~\eqref{eq:trivialupperbound} and Lemma~\ref{lemma:solutionslowerbound} there therefore exists a constant $c_0 = c_0(A,\bb) > 0$ so that
	\begin{equation*}
		c_0/m! \; n^{m-\rank{A}-1} \leq \density{\mS_0(A,\bb,n)} \leq n^{m-\rank{A}-1}
	\end{equation*}
	giving us the desired statement.
\end{proof}

\begin{lemma} \label{lemma:vdWDeltal}
	For every positive matrix $A \in \ZZ^{r\times m}$, vector $\bb \in \ZZ^r$ and $1 \leq \ell \leq m$ the maximum $\ell$--degree in $\mS_0(A,\bb,n)$ satisfies
	\begin{equation*} \label{eq:MakerHypergraphDegreeUpperbound}
		\Delta_{\ell} (\mS_0(A,\bb,n)) = O \left( \max_{ Q \subseteq [m],\, |Q|= \, \ell} n^{(m-\rank{A})-(|Q|-r_Q)} \right).
	\end{equation*}
\end{lemma}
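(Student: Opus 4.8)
The plan is to bound the number of edges of $\mS_0(A,\bb,n)$ containing a fixed $\ell$--set $S$ by counting the proper solutions in $[n]^m$ that use exactly the elements of $S$ in some $\ell$ of their coordinates. First I would fix a set $S = \{ s_1, \dots, s_\ell \} \subseteq [n]$ with $|S| = \ell$. Every edge $e \in \mS_0(A,\bb,n)$ with $S \subseteq e$ comes from some proper solution $\bx \in S_0(A,\bb) \cap [n]^m$, and the coordinates of $\bx$ equal to elements of $S$ are indexed by some set $Q \subseteq [m]$ with $|Q| = \ell$ (since $\bx$ is proper, distinct coordinates take distinct values, so exactly $\ell$ coordinates land in $S$, one for each element of $S$). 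There are at most $m!$ choices for $Q$ together with an assignment of the elements of $S$ to the coordinates in $Q$, and this is an absolute constant depending only on $m$. So it suffices to fix $Q$ and a bijection assigning values from $S$ to the coordinates in $Q$, and to bound the number of solutions $\bx \in S(A,\bb) \cap [n]^m$ whose $Q$--coordinates are prescribed in this way.

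Once the $Q$--coordinates of $\bx$ are fixed, the remaining $\widebar{Q}$--coordinates must satisfy the affine system $A^{\widebar{Q}} \cdot (\bx^{\widebar{Q}})^T = \bb^T - A^Q \cdot (\bx^Q)^T$. The key step is then exactly the argument already used in~\eqref{eq:trivialupperbound}: the number of solutions of an (affine, consistent) linear system in $|\widebar{Q}| = m - \ell$ unknowns over $[n]$ whose coefficient matrix is $A^{\widebar{Q}}$ is at most $n^{(m-\ell) - \rank{A^{\widebar{Q}}}}$, obtained by choosing a maximal independent set of columns of $A^{\widebar{Q}}$ and letting the remaining free coordinates range over $[n]$ (here if the system is inconsistent there are simply zero solutions, which only helps). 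Now by definition $\rank{A^{\widebar{Q}}} = \rank{A} - r_Q$, so $(m-\ell) - \rank{A^{\widebar{Q}}} = (m - \rank{A}) - (|Q| - r_Q)$, which is the exponent in the statement. Summing over the at most $m!$ choices of $(Q, \text{bijection})$ and taking the maximum over $Q$ with $|Q| = \ell$ yields
\begin{equation*}
	\Delta_\ell(\mS_0(A,\bb,n)) \leq m! \cdot \max_{Q \subseteq [m],\, |Q| = \ell} n^{(m - \rank{A}) - (|Q| - r_Q)},
\end{equation*}
which is the desired $O(\cdot)$ bound.

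I do not expect any serious obstacle here: this is a routine counting argument, and the only points requiring a little care are (i) the observation that properness forces exactly $\ell$ coordinates of a solution to lie in a fixed $\ell$--set, so the bookkeeping of which coordinate set $Q$ is used is a bounded overcount, and (ii) the bound on the number of extensions, which is verbatim the reasoning behind the upper bound in~\eqref{eq:trivialupperbound} applied to the submatrix $A^{\widebar{Q}}$ together with the identity $\rank{A^{\widebar{Q}}} = \rank{A} - r_Q$. Neither of these needs abundance or positivity of $A$, consistent with the hypotheses of the lemma being merely that $A$ is positive (and in fact positivity is not even used in the upper bound).
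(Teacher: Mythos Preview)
Your proof is correct and follows essentially the same route as the paper's: fix the $\ell$ prescribed coordinates (indexed by some $Q$), bound the number of completions by $n^{|\widebar{Q}|-\rank{A^{\widebar{Q}}}}$ via the argument of~\eqref{eq:trivialupperbound}, and absorb the bounded number of choices of $Q$ and assignment into the implicit constant. The paper uses the constant $m^\ell$ where you use $m!$, and phrases the first step via tuples rather than sets, but the argument is otherwise identical; your remark that positivity is not actually used in the upper bound is also correct.
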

\begin{proof}
	We have
	\begin{align*}
		\Delta_{\ell} (\mS_0(A,\bb,n)) & \leq \max_{(x_1 , \dots , x_{\ell}) \in  [n]^{\ell}} \big| \{ \bx \in S_0(A,\bb) \cap [n]^m : \exists \, Q \subseteq [m] \text{ s.t. } \bx^{Q} = (x_1, \dots , x_{\ell}) \} \big| \\
		& \leq \binom{m}{\ell} \max_{\substack{(x_1 , \dots , x_{\ell}) \in  [n]^{\ell} \\ Q \subseteq [m] , |Q| = \ell}} \big| \{ \bx \in [n]^{m-\ell} : A^{\widebar{Q}} \cdot \bx^T = \bb - A^{Q} \cdot (x_1, \dots, x_{\ell})^T \} \big| \\
		& \leq m^{\ell} \max_{\substack{Q \subseteq [m] \\ |Q| = \ell}} \, \max_{\bb' \in \ZZ^r} \, \big| \, S(A^{\widebar{Q}},\bb') \cap [n]^{m-\ell} \big|.
	\end{align*}
	Using~\eqref{eq:trivialupperbound} as well as the fact that $|\widebar{Q}| = m - |Q|$ and $r_Q = \rank{A} - \rank{A^{\widebar{Q}}}$, it follows that
	\begin{equation*}
		\Delta_{\ell} (\mS_0(A,\bb,n)) \leq  m^{\ell} \max_{Q \subseteq [m], \; |Q|= \, \ell} \!\!\!\!\! n^{|\widebar{Q}| - \rank{A^{\widebar{Q}}}} = m^{\ell} \max_{Q \subseteq [m], \; |Q|= \, \ell} \!\!\!\!\! n^{(m-\rank{A})-(|Q|-r_Q)}
	\end{equation*}
	giving us the desired statement.
\end{proof}

Using these results we are now ready to provide a proof of Theorem~\ref{thm:ThresholdGeneralizedVdWGames-Proper}. This will be immediately followed by proofs of Proposition~\ref{prop:ThresholdEasyGeneralizedVdWGames} and Corollary~\ref{cor:ThresholdGeneralizedVdWGames}.

\begin{proof}[Proof of Theorem~\ref{thm:ThresholdGeneralizedVdWGames-Proper}]

We will prove that the threshold bias satisfies $q(\mS_0(A,\bb,n)) = \Theta(n^{1/m_1(A)})$ by showing that the criteria of Theorem~\ref{cor:MakerWinCriterion} are met by $\mS_0(A,\bb,n)$ and that the criteria of Theorem~\ref{cor:BreakerWinCriterion} are met by $\mS_0(\Sub{A}{Q},\Sub{\bb}{Q},n)$ where $Q \subseteq [m]$ will be some appropriately chosen set of column indices. The bounds on the threshold bias obtained this way will asymptotically be the same, giving the desired statement.

\medskip

\noindent {\bf Maker's Strategy.} Since $A$ is abundant we know by Lemma~\ref{lemma:welldefined} that $r_Q = 0$ for any $Q \subseteq [m]$ satisfying $|Q| \leq 2$. It follows that by Lemma~\ref{lemma:vdWDeltal} we have $\Delta_1 (\mS_0(A,\bb,n)) = O ( n^{(m-\rank{A})-1} )$ and $\Delta_2 (\mS_0(A,\bb,n)) = O ( n^{(m-\rank{A})-2} )$. Lemma~\ref{lemma:vdWdensity} therefore immediately implies that both Condition~\emph{(M1)} and Condition~\emph{(M2)} hold. As $v(\mS_0(A,\bb,n)) = n$ and $\rank{A} \geq 1$ Lemma~\ref{lemma:vdWdensity} also implies that $\density{\mS_0(A,\bb,n)} = o(n^{m-1})$, so that Condition~\emph{(M3)} holds as well. It follows that Theorem~\ref{cor:MakerWinCriterion} applies and establishes the desired lower bound on the threshold bias since
\begin{align*}
	\min_{2 \leq \ell \leq m} \left( \frac{\density{\mS_0(A,\bb,n)}}{\Delta_{\ell} (\mS_0(A,\bb,n))} \right)^{\frac{1}{\ell-1}}
	& =  \min_{2 \leq \ell \leq m} \left( \frac{\Theta \big( n^{m-\rank{A}-1} \big)}{O \big( \max_{\substack{ Q \subseteq [m] \\ |Q| = \, \ell}} n^{(m-\rank{A})-(|Q|-r_Q)} \big)} \right)^{\frac{1}{\ell-1}} \\
	& = \min_{\substack{Q \subseteq [m] \\ |Q| \geq 2}} \Omega \left( n^{\frac{|Q| - r_Q - 1}{|Q|-1}} \right) = \Omega \big( n^{1/m_1(A)} \big).
\end{align*}

\medskip

\noindent {\bf Breaker's Strategy.} Let $Q$ and the corresponding $B$ and $\bc$ be as given by Corollary~\ref{cor:strictsubsystem}. Considering Maker's choice of vertices as the set $T$ in this corollary, it is clear that if Breaker can keep Maker from covering any solution in $S_0(B,\bc)$ then Maker must also fail at covering any solution in $S_0(A,\bb)$. For this part, we can therefore without loss of generality assume that $A$ is not just positive and abundant, but also strictly balanced, that is $(m-1)/(m-\rank{A}-1) = m_1(A)$ as well as $(|Q|-1)/(|Q|-r_Q-1) < m_1(A)$ for any $Q \subsetneq [m]$ satisfying $|Q| \geq 2$.

Note that $v(\mS_0(A,\bb,n)) = n$ so that we clearly have $v(\mS_0(A,\bb,n)) \to \infty$. Since $A$ is abundant we know by Lemma~\ref{lemma:welldefined} that $r_Q = 0$ for any $Q \subseteq [m]$ satisfying $|Q| = 1$. Lemma~\ref{lemma:vdWDeltal} as well as Lemma~\ref{lemma:vdWdensity} combined with the fact that $\Delta_1(\mS_0(A,\bb,n)) \geq \density{\mS_0(A,\bb,n)}$ therefore imply that
\begin{equation} \label{eq:vdwDelta1}
	\Delta_1(\mS_0(A,\bb,n))^{\frac{1}{m-1}} = \Theta \big( n^{m-\rank{A}-1} \big)^{\frac{1}{m-1}} = \Theta \big( n^{1/m_1(A)} \big).
\end{equation}
In the last step we have used the assumption that $A$ is strictly balanced. That same assumption also states that for all $Q \subseteq [m]$ satisfying $2 \leq |Q| < m$ we have $(|Q|-1)/(|Q|-r_Q-1) < m_1(A)$ so that
\begin{align*}
	\frac{(m-\rank{A})-(|Q|-r_Q)}{m-|Q|} & = \frac{(m-\rank{A}-1)-(|Q|-r_Q-1)}{(m-1)-(|Q|-1)} \\
	& < \frac{(m-\rank{A}-1)-(|Q|-1) / m_1(A)}{(m-1)-(|Q|-1)} \\
	& = \frac{1}{m_1(A)} \, \frac{1 - (|Q|-1)/(m-1)}{1 - (|Q|-1)(m-1)} = \frac{1}{m_1(A)}.
\end{align*}

Using this, it follows that there exists some $\epsilon = \epsilon(A) > 0$ so that for any $2 \leq \ell \leq m$ we have by Lemma~\ref{lemma:vdWDeltal} and~\eqref{eq:vdwDelta1} that
\begin{align*}
	\Delta_{\ell}(\mS_0(A,\bb,n))^{\frac{1}{m-\ell}} \, n^{\epsilon} & = O \Big( \max_{Q \subseteq [m], \; |Q| = \, \ell} \!\!\!\!\! n^{(m-\rank{A})-(|Q|-r_Q)} \Big)^{\frac{1}{m-\ell}} \, n^{\epsilon} \\
	& = O \Big( \max_{Q \subseteq [m], \; |Q| = \, \ell} \!\!\!\!\! n^{\frac{(m-\rank{A})-(|Q|-r_Q)}{m-|Q|} + \epsilon} \Big) \\
	& = O \big( n^{1/m_1(A)} \big) = O \big( \Delta_1(\mS_0(A,\bb,n))^{\frac{1}{m-1}} \big).
\end{align*}
It follows that Theorem~\ref{cor:BreakerWinCriterion} applies and due to~\eqref{eq:vdwDelta1} establishes the desired upper bound on the threshold bias.
\end{proof}

To conclude this part, let us prove Proposition~\ref{prop:ThresholdEasyGeneralizedVdWGames}.

\begin{proof}[Proof of Proposition~\ref{prop:ThresholdEasyGeneralizedVdWGames}]

	Let us start by noting that if $S(A,\bb) = \emptyset$, then the game hypergraph $\mS_0(A,\bb,n)$ is empty and the game trivially is an immediate win for Breaker.

	Let us therefore consider the case where $S(A,\bb) \neq \emptyset$ and $A$ is positive and non-abundant but still satisfies $\rank{A} > 0$. From the non-abundancy, it follows that there exist two column indices $1 \leq i_1,i_2 \leq m$ such for $Q = [m] \setminus \{i_1,i_2\}$ we get $\rank{A^Q} < \rank{A}$. It follows that there exist a set of basic row transformations and a row index $j$ such that row $j$ of $A^Q$ consists only of $0$ entries while row $j$ of $A$ can be taken as a basis vector of the space spanned by the rows of $A$. As $A$ is positive, this row of $A$ does not consist of all $0$ entries. If $i_1 = i_2$, then it follows that there that any $\bx = (x_1,\dots,x_m) \in S(A,\bb)$ satisfies $x_{i_1} = 0$, contradicting the assumption that $A$ is positive. If $i_1$ and $i_2$ are distinct, then it follows that there exist $v_1,v_2,b' \in \ZZ$ with $v_1,v_2 \neq 0$ such that any $\bx = (x_1,\dots,x_m) \in S(A,\bb)$ satisfies $v_1 x_{i_1} + v_2 x_{i_2} = b'$. Now, whenever Maker occupies some $i \in [n]$, Breaker can simply pick $(b' - v_1 \, i)/v_2$ and $(b' - v_2 \, i)/v_1$ (if these are indeed integer values in $[n]$) and thus block Maker's ability to cover any solution. It follows that Breaker has a winning strategy with a bias of at most $2$.
\end{proof}

\subsection{Solutions with repeated entries} \label{subsec:repeatedcomponents}

It is apparent already from the matrix of the $k$--AP game that allowing solutions with repeated components might make the game `easier’ for Maker. Indeed, the vector $(z,\ldots , z)\in [n]^k$ is a solution of the $k$--AP game, hence occupying any one element of $[n]$  would immediately provide Maker with such a solution and a win in its first move.

On the other hand constant vectors are the only non-proper solutions of the $k$--AP matrix, so the $k$--AP game with proper solutions does not become any different even if we allowed equality of any combination of the coordinates except for all of them.

For the matrix $(1\,\,\,1\,\,-1)$, the solutions of which are called Schur triples, allowing any combination of coordinates to be equal does not make the game significantly easier for Maker. Indeed, there are no positive solutions with $x_1=x_3$ or $x_2=x_3$ and to block solutions  with $x_1=x_2$ Breaker only needs at most two extra moves in each round, since he might need to occupy the double and the half of  Maker's previous move.

For another classic equation, the Sidon equation $x_1 + x_2 = x_3 + x_4$, the result of the game changes greatly according to which combination of coordinates we allow to be equal. According to Theorem~\ref{thm:ThresholdGeneralizedVdWGames-Proper} the game with proper solutions has threshold of the order $n^{2/3}$. If we allowed $x_1=x_3$ and $x_2=x_4$, then occupying any two different integers would provide Maker with a win, so the threshold bias would grow to $n-1$. If we were to allow for Maker solutions with repeated coordinates $x_1=x_2$, (but required $x_3\neq x_1, x_1\neq x_4, x_4 \neq x_3$), then it turns out that the game's threshold bias is the same order of magnitude as the one of the game with proper solutions.

Subsequently we will be after identifying exactly which component-equalities make the game easier for Maker and which ones do not. More precisely which one of them change the order of the threshold bias compared to the bias $q(\mS_0(A,\bb,n))$ and which ones do not. Identifying this correct notion of `non-degenerate’ solution for our setup takes a few definitions.

Given a solution $\bx = (x_1, \dots, x_m) \in S(A,\bb)$ for an integer-valued matrix $A\in \ZZ^{r \times m}$ and vector $\bb \in \ZZ^r$, let

\begin{equation*}
	\fp[\bx] = \big\{ \{ 1 \leq j \leq m : x_i = x_j \} : 1 \leq i \leq m \big\}
\end{equation*}
denote the set partition of the column indices $[m]$ indicating the repeated entries in $\bx$. Note that for $\bx \in S_0(A,\bb)$ we have $\fp[\bx] = \{ \{1\}, \dots, \{m\} \}$. Given some set partition $\fp$ of $\{1, \dots ,m\}$, let $A_{\fp}$ denote the matrix obtained by summing up the columns of $A$ according to $\fp$, that is for $\fp = \{ T_1, \dots, T_s \}$ such that $\min (T_1) < \dots < \min(T_s)$ for some $1 \leq s \leq m$ and $\bc_i$ the $i$--th column vector of $A$ for every $1 \leq i \leq m$, we have
\begin{equation*}
	A_{\fp} = \left( \begin{array}{ccccccc}
		\sum\limits_{i \in T_1} \! \bc_i & \Big| & \sum\limits_{i \in T_2} \! \bc_i & \Big| & \cdots & \Big| & \sum\limits_{i \in T_s} \! \bc_i
 	\end{array} \right).
\end{equation*}
Note that the assumption $\min (T_1) < \dots < \min(T_s)$ ensures that
this notion is well-defined and that $A_{\fp} = A$ for $\fp = \{ \{ 1
\}, \dots, \{ m \} \}$.

Using these definitions we can now define when a solution is considered to be non-degenerate:
\begin{enumerate}
	\item If $A$ is positive and abundant, then a solution $\bx \in S(A,\bb) \cap \NN^m$ is defined to be \emph{non-degenerate} if $|\fp[\bx]| \geq 2$ and $A_{\fp}$ is either non-abundant or it is abundant and satisfies $m_1(A_{\fp}) \geq m_1(A)$.
	\item If $A$ is positive and non-abundant, then a solution $\bx \in S(A,\bb) \cap \NN^m$ is defined to be \emph{non-degenerate} if $|\fp[\bx]| \geq 2$.
\end{enumerate}

For example for the (positive and abundant) matrices associated to $k$--APs and Schur triple the only non-degenerate solutions are the proper ones. For the matrix associated with the Sidon equation $x_1 + x_2 = x_3 + x_4$,  $3$--APs are non-degenerate solutions with repeated entries.

The main result of this section shows that this is the right definition for those solutions, which do not make the game any easier for Maker (i.e., do not lose any of the `complexity’ of the original system due to coordinate repetition). Note that our definition includes solutions $\bx \in S(A,\bb)$ for which $\rank{A_{\fp[\bx]}} = \rank{A}$. These were called \emph{non-trivial} by Ru\'e et al.~\cite{RSZ15}.  Both definitions extend a previous definition for single-line equations due to Ruzsa~\cite{Rz93}. We now let
\begin{equation*}
	S_1(A,\bb) = \big\{ \bx \in S(A,\bb) : \bx \text{ is non-degenerate} \big\}
\end{equation*}
and remark that $S(A,\bb) \supseteq S_1(A,\bb) \supseteq S_0(A,\bb)$. Furthermore we denote by $\mS_1(A,\bb,n)$ the hypergraph containing all non-degenerate solutions in $[n]$, that is
\begin{equation*}
	\mS_1(A,\bb,n) = \big\{ \{ x_1 , \dots , x_m \} : (x_1,\dots , x_m) \in \mS_1(A,\bb) \cap [n]^m \big\}.
\end{equation*}
Note that $\mS_1(A,\bb,n)$ in contrast to $\mS_0(A,\bb,n)$ is not necessarily uniform.

The following result can be proven as a corollary to Theorem~\ref{thm:ThresholdGeneralizedVdWGames-Proper} and shows that allowing non-degenerate solutions for Maker does not change the order of the threshold bias compared to the proper game. In other words, with only a constant factor times the original threshold bias, Breaker is able to block not just all proper solutions but also every non-degenerate solution. Here we let the notation $\Theta(0)$ mean $\Theta(1)$.
\begin{corollary} \label{cor:ThresholdGeneralizedVdWGames}
	For every positive matrix $A \in \ZZ^{r \times m}$ and vector $\bb \in \ZZ^r$ the threshold bias of the Maker-Breaker $(A,\bb)$--game on $[n]$ allowing non-degenerate solutions satisfies $q(\mS_1(A,\bb,n)) = \Theta \left( q(\mS_0(A,\bb,n)) \right)$.
\end{corollary}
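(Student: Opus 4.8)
The lower bound is immediate: every proper solution is non-degenerate, so $\mS_0(A,\bb,n) \subseteq \mS_1(A,\bb,n)$ as hypergraphs, and any Breaker strategy winning $\bG(\mS_1(A,\bb,n);q)$ in particular wins $\bG(\mS_0(A,\bb,n);q)$; hence $q(\mS_0(A,\bb,n)) \le q(\mS_1(A,\bb,n))$, that is $q(\mS_1(A,\bb,n)) = \Omega(q(\mS_0(A,\bb,n)))$.

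For the upper bound the plan is to decompose $\mS_1(A,\bb,n)$ according to the repetition pattern $\fp[\bx]$ of a solution. Fix a set partition $\fp = \{T_1,\dots,T_s\}$ of $[m]$. The contraction map sending $\by = (y_1,\dots,y_s)$ to the vector $\bx \in \ZZ^m$ with $x_j = y_i$ for $j \in T_i$ restricts --- by the very definition of $A_\fp$, which gives $A\cdot\bx^T = A_\fp\cdot\by^T$ --- to a bijection from $S_0(A_\fp,\bb)\cap[n]^s$ onto $\{\bx \in S(A,\bb)\cap[n]^m : \fp[\bx]=\fp\}$, and it preserves the underlying vertex set $\{x_1,\dots,x_m\} = \{y_1,\dots,y_s\}$. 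Summing over the set partitions realised by a non-degenerate solution therefore yields
\begin{equation*}
	\mS_1(A,\bb,n) \;=\; \bigcup_{\fp} \mS_0(A_\fp,\bb,n),
\end{equation*}
a union of boundedly many (at most the number $B_m$ of set partitions of $[m]$) uniform hypergraphs, one of which --- coming from the finest partition --- is $\mS_0(A,\bb,n)$ itself.

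By Remark~\ref{rmk:biasdoubling} it now suffices to provide, for each $\fp$ in this union, a Breaker strategy on $\mS_0(A_\fp,\bb,n)$ of bias $O(q(\mS_0(A,\bb,n)))$; superimposing the $\le B_m$ strategies loses only a constant factor. Assume first that $A$ is abundant. Each such $\fp$ then falls into one of three cases. \emph{(1)} $A_\fp$ is abundant and positive: here $S(A_\fp,\bb) \ne \emptyset$, so Theorem~\ref{thm:ThresholdGeneralizedVdWGames-Proper} gives $q(\mS_0(A_\fp,\bb,n)) = \Theta(n^{1/m_1(A_\fp)})$, and since $q(\mS_0(A,\bb,n)) = \Theta(n^{1/m_1(A)})$ by the same theorem while $m_1(A_\fp) \ge m_1(A)$ by the definition of non-degeneracy, this is $O(q(\mS_0(A,\bb,n)))$ --- this is precisely where the inequality $m_1(A_\fp) \ge m_1(A)$ built into non-degeneracy is used. \emph{(2)} $A_\fp$ is positive but non-abundant: then $q(\mS_0(A_\fp,\bb,n)) \le 2$ by Proposition~\ref{prop:ThresholdEasyGeneralizedVdWGames}. \emph{(3)} $A_\fp$ is not positive: then, $\bb$ being a fixed vector, Stiemke's lemma produces a relation $\sum_{i\in J} u_i y_i = c$ valid on all of $S(A_\fp,\bb)$ with $u_i > 0$ on a non-empty set $J$ and $c$ a constant, so every coordinate $y_i$, $i \in J$, of a solution in $[n]^s$ lies in a fixed finite set of size $O(1)$; Breaker then meets every edge of $\mS_0(A_\fp,\bb,n)$ by occupying, in $O(1)$ moves, all the integers in that set. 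In cases \emph{(2)} and \emph{(3)} the resulting $O(1)$ bound is $O(q(\mS_0(A,\bb,n)))$ since $q(\mS_0(A,\bb,n))$ is positive (and when it vanishes one reads $\Theta(0)$ as $\Theta(1)$). When $A$ is non-abundant, $q(\mS_0(A,\bb,n))$ is bounded by Proposition~\ref{prop:ThresholdEasyGeneralizedVdWGames}, and the same analysis applies, the linear dependency exploited in that proposition passing to each relevant $A_\fp$ and keeping every piece $O(1)$-blockable.

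The conceptual heart is case \emph{(1)} --- Theorem~\ref{thm:ThresholdGeneralizedVdWGames-Proper} applied to the smaller systems $A_\fp$, with the inequality $m_1(A_\fp) \ge m_1(A)$ doing exactly the work needed to keep the bias within a constant factor. The part I expect to be fiddly is the bookkeeping: establishing the contraction bijection carefully, identifying exactly which partitions appear in the union, and verifying in cases \emph{(2)}, \emph{(3)} and the non-abundant case that every degenerate piece $A_\fp$ really is blockable with bias $O(q(\mS_0(A,\bb,n)))$ --- a case analysis that is routine but not entirely mechanical.
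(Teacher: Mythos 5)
Your argument is the paper's argument: the decomposition of $\mS_1(A,\bb,n)$ by repetition pattern into the hypergraphs $\mS_0(A_\fp,\bb,n)$ is exactly the paper's central observation, and your cases \emph{(1)} and \emph{(2)} (Theorem~\ref{thm:ThresholdGeneralizedVdWGames-Proper} for abundant positive $A_\fp$ using $m_1(A_\fp)\ge m_1(A)$, Proposition~\ref{prop:ThresholdEasyGeneralizedVdWGames} for the non-abundant pieces) combined via Remark~\ref{rmk:biasdoubling} over the boundedly many partitions are precisely how the paper concludes, as is the treatment of non-abundant $A$. The trivial lower bound via $\mS_0\subseteq\mS_1$ is correct and left implicit in the paper.

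Your case \emph{(3)} is an addition: the paper silently applies Proposition~\ref{prop:ThresholdEasyGeneralizedVdWGames} to every non-abundant $A_\fp$ even though that proposition is stated only for \emph{positive} matrices, so you are right that non-positive $A_\fp$ deserves comment. However, your Stiemke argument as written has a hole exactly where positivity matters in the proposition's own proof. If the forced relation $\sum_{i\in J}u_iy_i=c$ has $|J|=1$, then every edge of $\mS_0(A_\fp,\bb,n)$ passes through a \emph{single} fixed vertex $v=c/u_{i_0}$, and since Maker moves first he can occupy $v$ before Breaker does; Breaker has then not ``met every edge,'' and the game reduces to a lower-dimensional system on the remaining coordinates which need not be $O(1)$-blockable. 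Your blocking step is only valid when $|J|\ge 2$, so that each edge contains at least two vertices of the fixed finite set and Breaker can claim all but Maker's one. To close this you would need either to show that $|J|=1$ cannot occur for a non-degenerate partition of a positive abundant $A$, or to handle the reduced system recursively with bias $O(q(\mS_0(A,\bb,n)))$.
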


\begin{proof}
	The central observation necessary to prove this corollary is that for all non-degenerate partitions $\fp$ (and in fact for all non-vacant partitions) we have 
	\begin{equation}
		\big\{ \{ x_1, \dots, x_m \} : (x_1,\dots,x_m) \in S(A,\bb) \cap [n]^m \text{ and } \fp[\bx] = \fp \big\} = \mS_0(A_{\fp},\bb,n).	
	\end{equation}
	Now if $A$ is positive and abundant then for non-degenerate $\fp$ the bias threshold of the game played on $\mS_0(A_{\fp},\bb,n)$ either satisfies $q(\mS_0(A_{\fp},\bb,n)) \leq 2$ by Proposition~\ref{prop:ThresholdEasyGeneralizedVdWGames} if $A_{\fp}$ is non-abundant or it satisfies $q(\mS_0(A_{\fp},\bb,n)) = \Theta(n^{1/m_1(A_{\fp})}) = O(n^{1/m_1(A)})$ by Theorem~\ref{thm:ThresholdGeneralizedVdWGames-Proper} if $A_{\fp}$ is abundant since we required that $m_1(A_{\fp}) \geq m_1(A)$. Noting that the number of possible non-degenerate partitions of $[m]$ is clearly bounded from above by $m!$ gives the desired result through Breaker's possibility to use strategy splitting, see Remark~\ref{rmk:biasdoubling}.
	
	If $A$ is non-abundant, then for any non-degenerate $\fp$ the matrix $A_{\fp}$ is also non-abundant and hence $q(\mS_0(A_{\fp},\bb,,n)) = q(\mS_0(A,\bb,n)) = \Theta(1)$.
\end{proof}

Let us motivate why this is the `right’ notion of non-degenerate solutions. Observe that given some partition $\fp$ of $[m]$ the set $\{ \bx \in S(A,\bb) : \fp[\bx] = \fp\}$ is either empty or it trivially consists only of non-degenerate or only of degenerate solutions. We therefore respectively also refer to $\fp$ as either \emph{vacant}, \emph{non-degenerate} or \emph{degenerate}. We will later remark in Subsection~\ref{subsec:morerepeatedcomponents} that allowing Maker to also win by occupying any solution belonging to a fixed degenerate partition $\fp$ does change the order of the threshold bias. In other words, non-degenerate solutions indeed provide an exact characterization for classes of solutions with repeated components that do not change the complexity of the original linear homogenous system.

\section{Proof of Theorem~\ref{thm:ThresholdHypergraphGames} -- Small Hypergraph Games} \label{sec:HypergraphGames}

First, observe that if $\mG$ is a collection of $e(\mG)$ independent edges, then Maker has a winning strategy if $q< {n-r(e(\mG)-1 ) \choose r}/(e(\mG)-1)$. So we may assume that this is not the case. We recall that $\mH(\mG,n)$ was the hypergraph of all copies of $\mG$ in $\mK_n^{(r)}$. We observe that $\mH(\mG,n)$ is $e(\mG)$--uniform and clearly satisfies
\begin{equation} \label{eq:hypergraphvertexnr}
	v(\mH(\mG,n))= \binom{n}{r} = \Theta(n^r)
\end{equation}
as well as $e(\mH(\mG,n)) = \binom{n}{v(\mG)} \, v(\mG)! \, / \, \text{aut}(\mG) = \Theta(n^{v(\mG)})$. In particular, it follows that
\begin{equation} \label{eq:hypergraphdensity}
	\density{\mH(\mG,n)} = \Theta(n^{v(\mG)-r}).
\end{equation}
Lastly observe that for $1 \leq \ell \leq e(\mG)$ we have
\begin{equation} \label{eq:hypergraphmaxldegree}
	\Delta_{\ell} (\mH(\mG,n)) = \Theta \left( \! \max_{\mF \subseteq \mG,\, e(\mF) = \ell} n^{v(\mG) - v(\mF)} \right).
\end{equation}

We will now prove that the threshold bias satisfies $q(\mH(\mG,n)) = \Theta(n^{1/m_r(\mG)})$ by showing that the criteria of Theorem~\ref{cor:MakerWinCriterion} are met by $\mH(\mG,n)$ and that the criteria of Theorem~\ref{cor:BreakerWinCriterion} are met by $\mH(\mF,n)$ where $\mF$ will be some appropriate dense subgraph of $\mG$. The bounds on the threshold bias obtained this way will asymptotically be the same, giving the desired statement.

\medskip

\noindent {\bf Maker's Strategy.}~\eqref{eq:hypergraphmaxldegree} implies that $\Delta_1(\mH(\mG,n)) = \Theta (n^{v(G)-r})$ as  well as $\Delta_2(\mH(\mG,n)) = O (n^{v(G) - (r+1)})$ so that Conditions~\emph{(M1)} and~\emph{(M2)} immediately follow from~\eqref{eq:hypergraphdensity}. Now, as we have already excluded the case that $\mG$ is a collection of $e(\mG)$ independent edges, we have $v(\mG)/e(\mG) < r$ so that $v(\mG)-r < r \, (e(\mG)-1)$ and hence Condition~\emph{(M3)} is satisfied by Equations~\eqref{eq:hypergraphvertexnr} and~\eqref{eq:hypergraphdensity}. It follows that Theorem~\ref{cor:MakerWinCriterion} applies and establishes the desired lower bound on the threshold bias since
\begin{align*}
	\min_{2 \leq \ell \leq e(\mG)} \left( \frac{\density{\mH(\mG,n)}}{\Delta_{\ell} (\mH(\mG,n))} \right)^{\frac{1}{\ell-1}}
	& =  \min_{2 \leq \ell \leq e(\mG)} \left( \frac{\Theta \big( n^{v(\mG) - r} \big) }{\Theta \big( \max_{\mF \subseteq \mG,\, e(\mF) = \ell} n^{v(\mG) - v(\mF)} \big) } \right)^{\frac{1}{\ell-1}} \\
	& =  \min_{\mF \subseteq \mG,\, e(\mF) \geq 2} \Theta \left( n^{\frac{v(\mF) - r}{e(\mF)-1}} \right) =  \Theta \big( n^{1/m_r(\mG)} \big).
\end{align*}

\medskip

\noindent {\bf Breaker's Strategy.} Note that we can restrict our attention to the case in which $\mG$ is strictly $r$--balanced, as otherwise we can replace $\mG$ with a strictly $r$--balanced subhypergraph $\mF \subset \mG$. Indeed, if Breaker can keep Maker from occupying $\mF$, then he clearly also succeeds in keeping Maker from occupying a copy of $\mG$. So we may assume that $m_r(\mG) = (e(\mG)-1)/(v(\mG)-r)$ and that $m_r(\mF) = (e(\mF)-1)/(v(\mF)-r) < m_r(\mG)$ for all subgraphs $\mF \subsetneq \mG$ on at least $r+1$ vertices.

Clearly $v(\mH(\mG,n)) \to \infty$ by~\eqref{eq:hypergraphvertexnr}. We note that by~\eqref{eq:hypergraphmaxldegree} as well as the assumption that $\mG$ is strictly $r$--balanced we have
\begin{equation} \label{eq:Delta1threshold}
	\Delta_1 (\mH(\mG,n))^{\frac{1}{e(\mG)-1}} = \Theta \big( n^{\frac{v(\mG)-r}{e(\mG)-1}} \big) = \Theta \big( n^{1/m_r(\mG)} \big).
\end{equation}
Since $\mG$ is strictly $r$--balanced, we also have that for every $2 \leq \ell \leq e(\mG)$ and every subhypergraph $\mF \subset \mG$ with $e(\mF) = \ell$ edges
\begin{align*}
	\frac{v(\mG)-v(\mF)}{e(\mG)-\ell} & = \frac{(v(\mG)-r)-(v(\mF)-r)}{(e(\mG)-1)-(e(\mF-1))} = \frac{1}{m_r(\mF)}\; \frac{1- \frac{v(\mF)-r}{v(\mG)-r}}{1- \frac{e(\mF)-1}{e(\mG)-1}} < \frac{1}{m_r(\mG)}.
\end{align*}
Therefore there exists a sufficiently small $\epsilon = \epsilon(r,\mG)$ such that for any $2 \leq \ell \leq e(\mG)$ we have by Equations~\eqref{eq:hypergraphvertexnr},~\eqref{eq:hypergraphmaxldegree} and~\eqref{eq:Delta1threshold} that
\begin{equation*}
	\Delta_{\ell}(\mH(\mG,n))^{\frac{1}{e(\mG)-\ell}} \; v(\mH(\mG,n))^{\epsilon} = \Theta \left( \max_{\substack{\mF \subseteq \mG \\ e(\mF) = \ell}} n^{\frac{v(\mG)-v(\mF)}{e(\mG)-\ell} + r \epsilon} \right) = O \left( n^{\frac{1}{m_r(\mG)}} \right) = O \left( \Delta_1 (\mH(\mG,n))^{\frac{1}{e(\mG)-1}} \right).
\end{equation*}
It follows that Theorem~\ref{cor:BreakerWinCriterion} applies and due to~\eqref{eq:Delta1threshold} establishes the desired upper bound on the threshold bias. \hfill $\square$

\section{Concluding Remarks} \label{sec:conclusion}

In this paper, we have established general criteria for hypergraphs $\mH$, which guarantee that the uniformly random Maker-strategy is essentially optimal in the biased Maker-Breaker game on $\mH$. We have proved that several natural games fall into this category. This included Rado games for solutions of linear equation systems as well as $\mG$--building games for any fixed uniform hypergraph $\mG$.

\subsection{Combining the Criteria for Maker and Breaker}

We note that one can easily combine Theorem~\ref{cor:MakerWinCriterion} and Theorem~\ref{cor:BreakerWinCriterion} to form the following statement giving the exact asymptotic behaviour of the bias threshold for games with a hypergraph that is not dense, roughly regular, and has an appropriate separation of the $\ell$--degrees from the degrees.

\begin{corollary}
	For every $k \geq 2$ the following holds. If $\mH = (\mH_n)_{n
          \in \NN}$ is a sequence of $k$--uniform hypergraphs for which
        there exists an $\epsilon > 0$ so that we have
	\begin{itemize}
		\item[\text{(I)}] \enspace $\density{\mH_n} = o \big(v(\mH_n)^{k-1} \big),$
		\item[\text{(II)}] \enspace $\Delta_1(\mH_n) = O \big( \density{\mH_n} \big),$
		\item[\text{(III)}] \enspace$\Delta_{\ell} (\mH_n)^{\frac{1}{k-\ell}} \, v(\mH_n)^{\epsilon} = O \big( \Delta_1 (\mH_n)^{\frac{1}{k-1}} \big)$ for every $2 \leq \ell \leq k-1$,
	\end{itemize}
	then the threshold biases of the games played on $\mH_n$ satisfy
	\begin{equation}
		q(\mH_n) = \Theta \left( \density{\mH_n}^{\frac{1}{k-1}} \right).
	\end{equation}
\end{corollary}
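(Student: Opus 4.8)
The plan is to read off the matching lower and upper bounds on $q(\mH_n)$ directly from the two general criteria: Theorem~\ref{cor:MakerWinCriterion} will give $q(\mH_n) = \Omega(\density{\mH_n}^{1/(k-1)})$ and Theorem~\ref{cor:BreakerWinCriterion} will give $q(\mH_n) = O(\density{\mH_n}^{1/(k-1)})$. The one computation that powers both halves is the following: for each $2 \le \ell \le k-1$, discarding the factor $v(\mH_n)^{\epsilon} \ge 1$ on the left of (III) and then using (II) yields $\Delta_\ell(\mH_n)^{1/(k-\ell)} = O(\Delta_1(\mH_n)^{1/(k-1)}) = O(\density{\mH_n}^{1/(k-1)})$, hence, after raising to the $(k-\ell)$-th power, $\Delta_\ell(\mH_n) = O(\density{\mH_n}^{(k-\ell)/(k-1)})$.

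For the lower bound I would verify the hypotheses of Theorem~\ref{cor:MakerWinCriterion}: (I) is literally (M3), (II) is literally (M1), and (M2) is the $\ell = 2$ case of the bound just displayed, namely $\Delta_2(\mH_n) = O(\density{\mH_n}^{(k-2)/(k-1)}) = o(\density{\mH_n})$. The criterion then gives $q(\mH_n) = \Omega\big(\min_{2 \le \ell \le k}(\density{\mH_n}/\Delta_\ell(\mH_n))^{1/(\ell-1)}\big)$, and I would identify this minimum as $\Theta(\density{\mH_n}^{1/(k-1)})$: the $\ell = k$ term equals $\density{\mH_n}^{1/(k-1)}$ since $\Delta_k(\mH_n) = 1$, while for $2 \le \ell \le k-1$ the bound above gives $\density{\mH_n}/\Delta_\ell(\mH_n) = \Omega(\density{\mH_n}^{(\ell-1)/(k-1)})$ and hence $(\density{\mH_n}/\Delta_\ell(\mH_n))^{1/(\ell-1)} = \Omega(\density{\mH_n}^{1/(k-1)})$. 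This proves $q(\mH_n) = \Omega(\density{\mH_n}^{1/(k-1)})$.

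For the upper bound, the hypothesis of Theorem~\ref{cor:BreakerWinCriterion} is verbatim (III) (together with $v(\mH_n) \to \infty$, which may be assumed --- if $v(\mH_n)$ stays bounded along a subsequence then so does $\density{\mH_n}$, and the assertion reduces to $q(\mH_n) = \Theta(1)$, which is clear). Hence the criterion applies and gives $q(\mH_n) = O(\Delta_1(\mH_n)^{1/(k-1)})$, which by (II) is $O(\density{\mH_n}^{1/(k-1)})$. Combining the two bounds gives $q(\mH_n) = \Theta(\density{\mH_n}^{1/(k-1)})$, as claimed.

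Since the two criteria being combined are already established, there is no real obstacle here; the argument is pure bookkeeping. The only point that deserves a moment's attention is the exponent arithmetic in the first paragraph, which makes precise that (III) is exactly the right strength to stop any intermediate $\ell$-degree ($2 \le \ell \le k-1$) from dragging the minimum in Maker's criterion below the $\ell = k$ value $\density{\mH_n}^{1/(k-1)}$ --- this is what ``an appropriate separation of the $\ell$-degrees from the degree'' is buying. The sole genuinely degenerate situation outside this scheme is $k = 2$ with $\density{\mH_n}$ bounded, where (III) is vacuous and (M2) fails so that Maker's criterion does not apply; but then the claim reads $q(\mH_n) = \Theta(1)$, which still holds since $q(\mH_n) \ge 1$ always while Theorem~\ref{cor:BreakerWinCriterion} gives $q(\mH_n) = O(\Delta_1(\mH_n)^{1/(k-1)}) = O(1)$.
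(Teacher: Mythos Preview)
Your proposal is correct and follows exactly the approach the paper intends: the paper does not actually prove this corollary but merely states that ``one can easily combine Theorem~\ref{cor:MakerWinCriterion} and Theorem~\ref{cor:BreakerWinCriterion}'', and your argument supplies precisely that combination, with the right exponent arithmetic to identify the minimum in Maker's criterion as $\Theta(\density{\mH_n}^{1/(k-1)})$. One tiny remark: when you discard the factor $v(\mH_n)^\epsilon$ to derive $\Delta_2(\mH_n) = O(\density{\mH_n}^{(k-2)/(k-1)}) = o(\density{\mH_n})$ for $k\ge 3$, the last step silently uses $\density{\mH_n}\to\infty$; this does follow (retain $v(\mH_n)^\epsilon$ and note $\Delta_2\ge 1$ forces $\Delta_1\to\infty$ once $v\to\infty$), so the argument is complete.
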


As the reader of the proofs of Theorem~\ref{thm:ThresholdGeneralizedVdWGames-Proper} and Theorem~\ref{thm:ThresholdHypergraphGames} will have noticed, this would only be applicable in the special case where the matrix or the hypergraph to be built is strictly balanced. For the proof of the full statement of these results we needed the two separate statements as well as the argument that without loss of generality one can replace the matrix or the hypergraph with a denser substructure when determining a strategy for Breaker.

\subsection{Obtaining constants}

In our main theorems we determine the right order of magnitude of the threshold biases for the games considered. Hence one might rightfully be interested in obtaining more precise statements, involving the constant factors. For the triangle-building game Chv\'atal and Erd\H os~\cite{CE78} established upper and lower bounds that are tight up to a constant factor $\sqrt{2}$. Their upper bound was slightly improved by Balogh and Samotij~\cite{BS11}, however the value of the right constant factor is still outstanding.

We state here a couple of bounds for the $3$--AP game,  where we already  established that the threshold bias is of the order $\sqrt{n}$.

\begin{proposition}
	For the threshold bias $q(n)$ of the 3-AP game played on $[n]$ we have
	\begin{align*}
		\sqrt{\frac{n}{12} - \frac{1}{6}}\leq q(n) \leq \sqrt{3n}.
	\end{align*}
\end{proposition}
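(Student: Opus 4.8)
\noindent\textbf{The upper bound $q(n)\le\sqrt{3n}$.} I would exhibit a winning strategy for Breaker whenever $q\ge\sqrt{3n}$. The crucial (and easy) point is that such a game is \emph{short}: after $i$ complete rounds exactly $i(q+1)$ elements of $[n]$ are claimed, so play lasts at most $T$ rounds with $(T-1)(q+1)\le n-1$; since $q\ge\sqrt{3n}$ we get $3(T-1)(q+1)\le 3(n-1)<q^2<q(q+1)$, hence $3(T-1)\le q$. Breaker's strategy is to maintain, after each of his moves, the invariant that no pair of Maker's integers lies in a $3$-term AP with a still-free third element (i.e.\ there is no dangerous almost complete solution). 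This invariant implies Breaker wins: the move completing a $3$-AP $\{x,y,z\}$ with $x,y$ already Maker's would claim a still-free $z$, and then $(\{x,y\},z)$ was dangerous right after Breaker's preceding move. And the invariant is maintainable: when Maker claims $w$ in round $i$, every new dangerous almost complete solution has the form $(\{w,u\},z)$ with $u$ an earlier Maker integer, and since any pair of integers lies in at most three $3$-APs, at most $3(i-1)\le q$ such third elements $z$ arise; Breaker claims all of them (and plays arbitrarily with any remaining moves). This already gives $q(n)\le\lfloor\sqrt{3n}\rfloor\le\sqrt{3n}$.

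\medskip

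\noindent\textbf{The lower bound $q(n)\ge\sqrt{n/12-1/6}$.} Equivalently, Maker wins whenever $n\ge 12q^2+3$, and I would prove this with an adaptive ``accumulate, then fork'' strategy. Maker plays arbitrary still-free integers---kept inside an interval $I\subseteq[n]$ chosen so that, for all $u,w\in I$, the $3$-AP completion(s) of $\{u,w\}$ he intends to use again lie in $[n]$---for the first $s$ rounds, where $s$ is optimised below; then in round $s+1$ he plays a still-free $w\in I$ maximising the number of earlier Maker integers $u$ for which the completion $z_{w,u}$ is still free. For a fixed ``position type'' of the pair inside its $3$-AP the completions $z_{w,u}$ are pairwise distinct as $u$ varies (each is an affine function of $u$ with nonzero leading coefficient), so it suffices to make this maximum exceed $q$. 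That follows by double counting over $w\in I$ and the at most $s$ previous Maker integers $u$: for each $u$ the map $w\mapsto z_{w,u}$ is injective on $I$, and its image misses at most the $sq$ integers already taken by Breaker, so the sum over $w$ of the relevant counts is at least $\approx s(|I|-sq)$, whence some admissible $w$ attains at least $\approx \frac{s(|I|-sq)}{|I|}$ (after subtracting the $O(s)$ elements of $I$ Maker has himself used). Optimising $s$ makes this larger than $q$ exactly once $n$ passes the threshold $12q^2+3$. With more than $q$ distinct free completions present after Maker plays $w$, Breaker cannot block all of them in his reply, and Maker completes a $3$-AP at a surviving completion on the next move.

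\medskip

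\noindent\textbf{Where the work is.} For the upper bound there is essentially no obstacle once one realises that the relevant parameter is the \emph{number of rounds} rather than the number of $3$-APs; only the elementary arithmetic relating $\lfloor\sqrt{3n}\rfloor$, $T$, and the inequality $3(T-1)\le q$ has to be checked. The substance of the lower bound is the counting: choosing the interval $I$ and the accumulation length $s$, and bounding the free, pairwise distinct completions tightly---in particular ruling out coincidences among completions arising from the different positions a pair can occupy in a $3$-AP, and accounting for the few completions forced out of $[n]$ or already claimed by Maker---so that the bound comes out in the clean form $n\ge 12q^2+3$. Guaranteeing that Maker can always find a still-free centre $w$ realising the averaged count is the step where the constant is most easily degraded.
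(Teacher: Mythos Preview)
Your upper bound argument is correct and essentially identical to the paper's: Breaker blocks all 3-APs through Maker's new point and any earlier Maker point, and the bound $3(i-1)\le q$ follows from the game's length.

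Your lower bound takes a genuinely different route. The paper does not build an explicit Maker strategy; it invokes Beck's \emph{Biased Maker's Win Criterion}: Maker wins $\bG(\mH;q)$ as first player whenever
\[
\sum_{H\in\mH}(1+q)^{-|H|} > \frac{q^2}{(1+q)^3}\,\Delta_2(\mH)\,v(\mH).
\]
For $\mH_n$ the hypergraph of 3-APs in $[n]$ one has $v(\mH_n)=n$, $e(\mH_n)\ge n^2/4-n/2$, $\Delta_2(\mH_n)\le 3$, and the inequality reduces to $q^2<n/12-1/6$. The constant $1/12$ drops out mechanically from these parameters.

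Your ``accumulate, then fork'' strategy is a reasonable idea and can certainly be pushed through to give $q(n)\ge c\sqrt{n}$ for some explicit $c>0$, but the claim that it yields exactly $n\ge 12q^2+3$ is asserted rather than proved. Carrying out the double count you describe (say, with a single completion type such as midpoints, restricted to an interval $I$ so that completions stay in $[n]$) and optimising over the accumulation length $s$, one finds the threshold comes out around $n\gtrsim 32q^2$, not $12q^2$; the losses come from restricting to a subinterval, from a single completion type, and from the averaging step where one passes from the sum over $w$ to a single good $w$. Squeezing the constant down to $12$ with this method, if possible at all, would require a substantially sharper count than the sketch provides. So as written, the lower bound does not establish the stated inequality; the paper's potential-function approach is both shorter and gives the constant for free.
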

\begin{proof}
Let us first prove the upper bound by providing a winning strategy for Breaker if he is given a bias of $q \geq \sqrt{3n}$. The strategy will simply consist of blocking all possible 3-APs containing Maker's last choice and one of its previous choices. As for each fixed pair of integers there are at most three 3-APs containing them and Maker occupies at most $M=\lceil n/(q+1)\rceil$ integers during the course of the whole game, the number of 3-APs to be blocked is never more than $3 \, (M-1)$. Since
\begin{align*}
	3 \, \left( M-1\right) \leq q	
\end{align*}
for $q \geq \sqrt{3n}$, Breaker has enough moves in each round to occupy the (at most) one unoccupied element in each of the dangerous 3-APs.

For the lower bound we use the generalization of a criterion that was developed by Beck for Maker's win in the unbiased van der Waerden game~\cite{Be81}. He later stated a biased version~\cite{Be08} and this is what we will apply here.

\begin{theorem}[Biased Maker's Win Criterion~\cite{Be08}] \label{thm:BiasedMakerCriterion}
	Let $\mH$ be a hypergraph and $q \in \NN$. Maker has a winning strategy as the first player in the $q$--biased game on $\mH$ if
	\begin{align} \label{eq:BiasedMakerCriterion}
		\sum_{H \in \mH} \left( \frac{1}{1 + q} \right)^{|H|} > \frac{q^2}{(1 + q)^3} \; \Delta_2(\mH) \; v(\mH).
	\end{align}
\end{theorem}

For the hypergraph $\mH_{n}$ of $3$--APs in $[n]$ we observe that $v(\mH_n) = n$, $e(\mH_n) \geq n^2/4 - n/2 $, and $\Delta_2(\mH_n) \leq 3$. Consequently with a bias of $q <\sqrt{\frac{n}{12} - \frac{1}{6}}$ the condition (\ref{eq:BiasedMakerCriterion}) holds for $\mH_n$ and Theorem~\ref{thm:BiasedMakerCriterion} provides the winning strategy for Maker.
\end{proof}

Observe that the constants $\sqrt{1/12}$ and $\sqrt{3}$ are only a factor $6$ apart, it would be interesting to close this gap.
\begin{question}
	Prove the existence of a constant $C>0$, such that the threshold bias of the 3-AP game is $(C+o(1))\sqrt{n}$.
\end{question}

It should be noted that one may also apply Theorem~\ref{thm:BiasedMakerCriterion} to the $k$--AP game and obtain a lower bound of the right order of magnitude on the the threshold bias for every $k \geq 3$. The ad-hoc argument for Breaker's win does not seem to generalize immediately.

\begin{conjecture}
	For every positive and abundant matrix $A \in \ZZ^{r \times m}$ and vector $\bb \in \ZZ^r$,  there exists a constant $C = C(A,\bb) > 0$ such that $q(\mS_0(A,\bb,n)) = (C+o(1)) \, n^{1/m_1(A)}$.
\end{conjecture}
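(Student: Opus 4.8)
The plan is to upgrade the order-of-magnitude argument behind Theorem~\ref{thm:ThresholdGeneralizedVdWGames-Proper} to constant precision, treating the Maker (lower bound) and Breaker (upper bound) sides separately and then arguing that the two limiting constants coincide. On the Maker side one must replace the $\delta$--stability detour — which loses constants in the factor-$2$ slack of the round count \eqref{eq:limitrounds}, in the crude Markov step, and in the exponent $(c_1 2^{k+2})^{-1}$ of Theorem~\ref{thm:decay} — by a sharp analysis. The key quantity is the exact resilience threshold for the random subhypergraph of $\mS_0(A,\bb,n)$: the infimum $c^*$ such that, for a uniform random set $R \subseteq [n]$ of size $c\,n^{1-1/m_1(A)}$ with $c > c^*$, with high probability every $(1-\epsilon)$--fraction of $R$ contains a proper solution. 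This is the resilient version of the appearance threshold for solutions in random sets; for $k$--APs and single equations such statements should be within reach of the transference and sparse-regularity machinery of Conlon--Gowers and Schacht, but the resilient form with an \emph{explicit} constant would have to be pinned down. Given $c^*$, {\tt RandomMaker} wins whenever he can guarantee occupying a $(1-\epsilon)$--fraction of a random set of size slightly above $c^*\,n^{1-1/m_1(A)}$; since he plays $n/(q+1)$ rounds this produces the barrier $q \le (1/c^* - o(1))\,n^{1/m_1(A)}$, provided the Markov estimate on the number of failures is upgraded to a Chernoff-type bound so that the slack becomes $o(1)$ rather than a constant.

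On the Breaker side, the present proof splits the bias into three generous pieces and invokes the biased Erd\H{o}s--Selfridge bound with loose constants. To obtain a matching constant one would instead design a single, cleaner Breaker strategy — a potential-function strategy in the spirit of Erd\H{o}s--Selfridge but tuned to the self-similar structure of $\mS_0(A,\bb,n)$, tracking not merely the number of surviving almost complete solutions but also their clustering, in the spirit of the Balogh--Samotij refinement of the triangle game. The target is to show Breaker wins as soon as $q \ge (1/c^* + o(1))\,n^{1/m_1(A)}$, so that $C = 1/c^*$, with the strictly balanced reduction of Corollary~\ref{cor:strictsubsystem} used (as in the proof of Theorem~\ref{thm:ThresholdGeneralizedVdWGames-Proper}) to control the spurious lower-order solutions.

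The main obstacle is precisely this last step — making the Maker and Breaker constants meet. Already for the triangle-building game of Chv\'atal--Erd\H{o}s, the graph analogue of $\mS_0(A,\bb,n)$ for a density-extremal system, the sharp constant is unknown: Bednarska--{\L}uczak only recover the order and the Balogh--Samotij improvement still leaves a multiplicative gap, mirrored here by the factor-$6$ gap in the $3$--AP bounds of the preceding proposition. So the conjecture in full effectively subsumes that open problem. A more realistic first target, which I would attempt before the value of $C$, is to prove merely that the limit $\lim_{n} q(\mS_0(A,\bb,n)) / n^{1/m_1(A)}$ \emph{exists}, via a self-similarity argument: partition $[\lambda n]$ into roughly $\lambda$ translated copies of $[n]$, relate a Breaker strategy on $[\lambda n]$ to playing the $\lambda$ sub-games in parallel while separately handling the cross-solutions straddling several blocks (which are of lower order after passing to $B(A,Q)$), and deduce $q(\lambda n) = (\lambda^{1/m_1(A)} + o(1))\,q(n)$, from which a Fekete-type argument yields convergence. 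Settling the conjecture outright for concrete systems such as $k$--APs, Schur triples, or the Sidon equation, where the resilience constant $c^*$ is accessible, would then be the natural intermediate achievement.
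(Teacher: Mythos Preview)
The statement you are addressing is a \emph{conjecture}, not a theorem: the paper does not prove it and explicitly presents it as open, immediately after noting that even for the $3$--AP game the upper and lower constants differ by a factor of $6$, and that the analogous question for graph-building games (Bednarska--{\L}uczak) is also open. So there is no ``paper's own proof'' to compare against.

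Your write-up is not a proof but a research programme, and you essentially acknowledge this yourself when you observe that the conjecture subsumes the open triangle-game constant problem. The outline you give is reasonable as a sketch of the difficulties --- sharpening the Maker side to a resilience constant $c^*$, redesigning the Breaker strategy to avoid the wasteful bias-splitting, and the Fekete-type existence-of-limit argument as an intermediate goal --- but none of these steps is actually carried out, and the central obstacle (making the Maker and Breaker constants coincide) is simply named, not overcome. In particular, the self-similarity argument for existence of the limit would require controlling the ``cross-solutions'' straddling blocks, and it is not at all clear these are of lower order for general abundant $A$; they are typically of the \emph{same} order as the in-block solutions. As it stands, this is a plausible discussion of where the problem lies, not a proof.
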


The analogous question for graph-building games has been posed by Bednarska and {\L}uczak~\cite{BL00}. For hypergraph-building games the same question can of course also be asked.

\subsection{More on repeated entries} \label{subsec:morerepeatedcomponents}

It is not necessary to compare games with repeated solutions to the proper game hypergraph $\mS_0(A,\bb,n)$. For each family $\mP$ of non-vacant partitions of $[n]$, one can define the game hypergraph $\mS (A,\bb , n, \mP)$ containing all subsets that consist of the distinct components of such a solution to $A \cdot \bx^T = \bb^T$ for which  $\fp[x] \in \mP$, that is
\begin{equation}
	\mS (A, \bb, n, \mP) = \{ \{x_1,\dots,x_m\} : (x_1,\dots,x_m) = \bx \in S(A,\bb) \cap [n]^m  \text{ and } \fp[\bx] \in \mP \}.
\end{equation}

Theorem~\ref{thm:ThresholdGeneralizedVdWGames-Proper} of course deals with the case where $\mP$ consists just of the partition $\{ \{1\}, \dots, \{m\} \}$ and Corollary~\ref{cor:ThresholdGeneralizedVdWGames} with the case where $\mP = \{ \fp[\bx] : \bx \in S_1(A,\bb) \}$ consists of all non-degenerate partitions.

Considering the proof of Corollary~\ref{cor:ThresholdGeneralizedVdWGames} it should be clear that playing on the hypergraph of all solutions to $A$ with repetitions indicated by some given partition $\fp$ is the same as playing on the hypergraph $\mS_0(A_{\fp},\bb)$. The following statement follows immediately and can therefore be seen both as a generalization of but also an easy corollary to Theorem~\ref{thm:ThresholdGeneralizedVdWGames-Proper}.

\begin{theorem} \label{thm:ThresholdGeneralizedVdWGames-MostGeneral}
	For every matrix $A \in \ZZ^{r\times m}$, vector $\bb \in \ZZ^r$ and family $\mP$ of set partitions of $[m]$ the corresponding threshold bias satisfies $q(\mS(A,\bb, n, \mP)) = \Theta \big( \max_{\fp \in \mP} q(A_{\fp},\bb,n) \big)$.
\end{theorem}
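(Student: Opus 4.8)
The plan is to reduce everything to the observation, already used in the proof of Corollary~\ref{cor:ThresholdGeneralizedVdWGames}, that the game in which Maker must occupy a solution with a prescribed repetition pattern $\fp$ is literally the game on $\mS_0(A_\fp,\bb,n)$, and then to combine the partitions in $\mP$ by monotonicity on one side and strategy splitting on the other.

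First I would record the structural identity
\begin{equation*}
	\mS(A,\bb,n,\mP) \;=\; \bigcup_{\fp\in\mP}\mS_0(A_\fp,\bb,n).
\end{equation*}
This follows exactly as in the proof of Corollary~\ref{cor:ThresholdGeneralizedVdWGames}: if $\bx\in S(A,\bb)\cap[n]^m$ has $\fp[\bx]=\fp=\{T_1,\dots,T_s\}$ with $\min T_1<\dots<\min T_s$, then $\bx$ is constant on each block, say with value $y_i$ on $T_i$; the $y_i$ are pairwise distinct, $A_\fp\cdot\by^T=\bb^T$ for $\by=(y_1,\dots,y_s)$, and $\{x_1,\dots,x_m\}=\{y_1,\dots,y_s\}$, and conversely every $\by\in S_0(A_\fp,\bb)\cap[n]^s$ arises this way. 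Vacant members of $\mP$ merely contribute the empty hypergraph.

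For the lower bound, $\mS(A,\bb,n,\mP)\supseteq\mS_0(A_\fp,\bb,n)$ for each $\fp\in\mP$, so any Breaker strategy that wins on the larger hypergraph with bias $q$ also wins on the smaller one by ignoring the extra edges; hence $q(\mS(A,\bb,n,\mP))\ge q(\mS_0(A_\fp,\bb,n))$ for every $\fp$, and therefore $q(\mS(A,\bb,n,\mP))\ge\max_{\fp\in\mP}q(A_\fp,\bb,n)$. For the upper bound, observe that $|\mP|$ is at most the number of set partitions of $[m]$, a constant depending only on $m$. For each $\fp\in\mP$ fix a Breaker strategy that wins the game on $\mS_0(A_\fp,\bb,n)$ with bias $q(\mS_0(A_\fp,\bb,n))$; by Remark~\ref{rmk:biasdoubling} Breaker can run all these strategies in parallel, obtaining a winning strategy on the union $\mS(A,\bb,n,\mP)$ with total bias $\sum_{\fp\in\mP}q(\mS_0(A_\fp,\bb,n))\le|\mP|\cdot\max_{\fp\in\mP}q(A_\fp,\bb,n)=O\big(\max_{\fp\in\mP}q(A_\fp,\bb,n)\big)$. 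Combining the two bounds gives the claim.

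There is essentially no hard step: all the content sits in the identity above, which is the same bookkeeping already performed for Corollary~\ref{cor:ThresholdGeneralizedVdWGames}, together with the two routine facts that adding hyperedges can only help Maker and that boundedly many Breaker strategies amalgamate at the cost of a bounded factor in the bias. The only points needing a line of care are degenerate boundary cases -- for instance a partition $\fp$ for which $A_\fp$ has no solutions or yields singleton winning sets -- which are either handled by the conventions already in force (such as $\Theta(0)=\Theta(1)$) or checked directly, and do not affect the asymptotics otherwise.
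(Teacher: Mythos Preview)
Your proposal is correct and follows exactly the approach the paper indicates: the paper states that the theorem ``follows immediately'' from the observation (used in the proof of Corollary~\ref{cor:ThresholdGeneralizedVdWGames}) that the game restricted to a fixed repetition pattern $\fp$ is the game on $\mS_0(A_\fp,\bb,n)$, and you have spelled out precisely that argument together with the monotonicity and strategy-splitting steps.
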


This of course implies that if there exists $\fp \in \mP$ such that $A_{\fp}$ is positive and abundant and $\fp' \in \mP$ is the one of those partitions that minimizes the parameter $m_1(A_{\fp'})$, then we have $q(\mS(A,\bb, n, \mP)) = \Theta \big( n^{1/m_1(A_{\fp'})} \big)$. This result also shows that the notion of non-degeneracy as defined in the introduction is the broadest possible notion that does not change the character of the game, that is the asymptotic behaviour of its bias threshold.

\subsection{The probabilistic intuition} \label{subsec:probintuition}

It was Chv\'atal and Erd\H{o}s who first pointed out some surprising similarities between certain positional games and results in random graphs. Given some hypergraph $\mH$ let the \emph{appearance threshold} $p(\mH)$ be the threshold probability for the property that the random set $V(\mH)_p$ contains an edge. The \emph{probabilistic intuition} states that the appearance threshold $p(\mH)$ hints at the bias threshold $q(\mH)$ of the game $\bG(\mH;q)$, namely that $q(\mH) \sim p(\mH)^{-1}$. This intuition holds true for several ‘global’ properties such as hamiltonicity or connectivity. We have previously remarked that the biased Erd\H{o}s-Selfridge strategy states that Breaker can do at least as well as when both players act randomly.

Obviously for the games studied by Bednarska and {\L}uczak~\cite{BL00} as well as the two types of Maker-Breaker games studied in this paper, this probabilistic intuition fails. The appearance threshold for a given fixed $r$--uniform hypergraph $\mG$ in the $r$--uniform random graph $G_{n,p}^{(r)}$ occurs around $n^{-1/m(\mG)}$ where $m(\mG) \neq m_r(\mG)$ is the density of $\mG$ maximized over all subgraphs. Likewise, the appearance threshold of homogeneous solutions to a given positive and abundant matrix $A \in \ZZ^{r \times m}$ occurs around $n^{-1/m(A)}$ where $m(A) \neq m_1(A)$ is a parameter of $A$ maximized over all induced submatrices, see Ru\'e et al.~\cite{RSZ15}. As an example, $k$--APs start to appear around $n^{-2/k}$ whereas we have shown the threshold bias to satisfy $n^{1/(k-1)}$.

However, as already noted in the introduction, a different type of random intuition still plays an important role. Our results have a strong connection to sparse Tur\'an- and Szemer\'edi-type statements. Given an $r$--uniform hypergraph $\mG$, let $\text{ex}(n,\mG)$ be the largest number of edges in a $\mG$--free subgraph of $\mathcal{K}_n^{(r)}$ and let $\pi_k(\mG) = \lim_{n \to \infty} \text{ex}(n,\mG) / {n \choose r}$. For $\epsilon > 0$ an $r$--uniform hypergraph $\mF$ is called \emph{$(\mG,\epsilon)$--Tur\'an} if every subgraph of $\mF$ with at least $(\pi_r(\mG) +\epsilon) \, e(\mF)$ edges contains a copy of $\mG$. Conlon and Gowers (in the strictly balanced case)~\cite{CG10} and independently Schacht~\cite{Sch12} showed that the threshold probability of the event that $G_{n,p}^{(r)}$ is $(\mG,\epsilon)$--Tur\'an is $\Theta(n^{-1/m_r(\mF)})$. Compare this to our result that the threshold bias of the Maker-Breaker $\mG$--game is $\Theta(n^{1/m_r(\mG)})$. Similarly, Schacht~\cite{Sch12} showed that for a given density regular matrix $A \in \ZZ^{r \times m}$ the threshold probability for the event that $[n]_p$ is $(\delta,A)$--stable is $\Theta(n^{-1/m_1(A)})$. The third author~\cite{Sp16} as well as independently Hancock, Staden and Treglown~\cite{HST17} extended this result to abundant matrices. Our result shows that the threshold bias of the Maker-Breaker $A$--game lies around $\Theta(n^{1/m_1(A)})$ for the much broader class of positive and abundant matrices. We call the intuition one might infer from this \emph{probabilistic Tur\'an intuition} for biased Maker-Breaker games. We have proven two criteria for Breaker as well as Maker that provide some criteria to verify if this intuition indeed holds true for a given game.

\paragraph{Acknowledgements.} We would like to thank Ma{\l}gorzata
Bednarska-Bzd{\c e}ga, Shagnik Das and Tam{\'a}s M\'esz\'aros for clarifying discussions and constructive suggestions. We also thank the referees of the paper for several suggestions and comments.

\bibliography{bib}
\bibliographystyle{abbrv}

\section*{Appendix} \label{sec:remarks}

Recall that the \emph{median} $\median$ of a discrete random variable $X$ satisfies $\PP{X \leq \median} \geq 1/2$ as well as $\PP{X \geq \median} \geq 1/2$. Furthermore recall that any median of the binomial distribution $\mB(n,p)$ lies between $\floor{np}$ and $\ceil{np}$, that is $\floor{np} \leq \median \left( \mB(n,p) \right) \leq \ceil{np}$. A family of subsets $\mP \subseteq 2^{[n]}$  is called \emph{monotone decreasing} if $A \subseteq B$ and $B \in \mP$ implies $A \in \mP$. It is called \emph{monotone increasing} if its complement in $2^{[n]}$ is monotone decreasing. As usual one identifies properties of subsets of $[n]$ with the corresponding family of subsets having the property. The purpose of this appendix is to prove the following lemma:
\begin{lemma}
	Let $X \sim \mB(n,p)$ and let $\mP$ be a monotone decreasing family of subsets of $[n]$.Then there exists a constant $C>0$ such that if $\sqrt{np(1-p)} > C $, then  $\PP{[n]_{\floor{np}} \in \mP} \leq 3 \, \PP{[n]_p \in \mP}$.
\end{lemma}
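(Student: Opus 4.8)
The plan is to pass between the binomial and uniform random-set models by conditioning on the size of $[n]_p$. Conditioned on the event $\{ |[n]_p| = m \}$, the set $[n]_p$ is distributed exactly as the uniform random set $[n]_m$, so
\[
	\PP{[n]_p \in \mP} = \sum_{m=0}^{n} \binom{n}{m} p^m (1-p)^{n-m} \, \PP{[n]_m \in \mP}.
\]
The first key step is that $m \mapsto \PP{[n]_m \in \mP}$ is non-increasing: there is a natural coupling of $[n]_m$ and $[n]_{m+1}$ with $[n]_m \subseteq [n]_{m+1}$, and since $\mP$ is monotone decreasing, $[n]_{m+1} \in \mP$ forces $[n]_m \in \mP$, whence $\PP{[n]_{m+1} \in \mP} \leq \PP{[n]_m \in \mP}$.

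Given this, I would discard from the sum all terms with $m > \floor{np}$ and bound $\PP{[n]_m \in \mP} \geq \PP{[n]_{\floor{np}} \in \mP}$ for the remaining terms, obtaining
\[
	\PP{[n]_p \in \mP} \;\geq\; \PP{[n]_{\floor{np}} \in \mP} \cdot \PP{X \leq \floor{np}}.
\]
It therefore suffices to show that $\PP{X \leq \floor{np}} \geq 1/3$ once $\sqrt{np(1-p)}$ exceeds a suitable absolute constant $C$, since then $\PP{[n]_{\floor{np}} \in \mP} \leq 3 \, \PP{[n]_p \in \mP}$ as desired.

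For this last bound I would split into cases according to whether $np$ is an integer. If $np \in \NN$, then $np = \floor{np}$ is a median of $\mB(n,p)$, so $\PP{X \leq \floor{np}} \geq 1/2$ directly. If $np \notin \NN$, then $\ceil{np} \geq \median\!\left(\mB(n,p)\right)$ gives $\PP{X \leq \ceil{np}} \geq 1/2$, and hence
\[
	\PP{X \leq \floor{np}} \;\geq\; \tfrac{1}{2} - \PP{X = \ceil{np}} \;\geq\; \tfrac{1}{2} - \max_{0 \leq k \leq n} \binom{n}{k} p^k (1-p)^{n-k}.
\]
A standard Stirling estimate for the largest binomial term yields $\max_k \binom{n}{k} p^k (1-p)^{n-k} \leq C_0 / \sqrt{np(1-p)}$ for an absolute constant $C_0$, so taking $C = 6 C_0$ makes the right-hand side at least $1/2 - 1/6 = 1/3$.

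The only mildly delicate point — the part I would flag as the main obstacle — is the anticoncentration estimate on the maximal atom $\max_k \PP{\mB(n,p) = k}$; everything else is bookkeeping with the conditioning identity and the monotonicity of $\mP$. If one prefers, this estimate can simply be quoted from standard references on the binomial distribution rather than rederived via Stirling.
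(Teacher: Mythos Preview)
Your proposal is correct and follows essentially the same route as the paper: condition on $|[n]_p|$, use monotonicity of $m\mapsto \PP{[n]_m\in\mP}$ to reduce to $\PP{X\le\floor{np}}\ge 1/3$, and then combine the median location for $\mB(n,p)$ with a Stirling-type bound on the largest atom. The only cosmetic differences are that the paper splits according to whether $\median=\floor{np}$ or $\median=\ceil{np}$ (rather than whether $np\in\NN$) and bounds $\PP{X=\ceil{np}}$ directly rather than $\max_k \PP{X=k}$.
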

%

\begin{proof}
	Note that since $\mP$ is monotone decreasing, we have $\PP{[n]_K \in \mP} \geq \PP{[n]_L \in \mP}$ whenever $K \leq L$. Thus
	\begin{align*}
		\PP{[n]_p \in \mP} & = \sum_{M=0}^n \PP{[n]_p \in \mP \, | \, |[n]_p| = M} \, \PP{|[n]_p| = M} = \sum_{M=0}^n \PP{[n]_{M} \in \mP} \, \PP{|[n]_p| = M} \\
		& \geq \sum_{M=0}^{\floor{np}} \PP{[n]_{M} \in \mP} \, \PP{|[n]_p| = M} \geq \PP{[n]_{\floor{np}} \in \mP} \, \sum_{M=0}^{\floor{np}} \PP{|[n]_p| = M}.
	\end{align*}
Note that $\sum_{M=0}^{\floor{np}} \PP{|[n]_p| = M} = \PP{X \leq \floor{np}}$. Let $\mu_{1/2}$ be the median of $X$ and assume first that  $\floor{np} \leq \median < \ceil{np}$. Then $\PP{X \leq \floor{np}} = \PP{X \leq \median} \geq \frac{1}{2}$ and hence
\begin{equation*}
	\mathbb{P}([n]_{\floor{np}} \in \mP) \leq 2 \, \PP{[n]_p\in \mP}.
\end{equation*}
It remains to be shown that the assertion follows as well if $\median = \ceil{np}$. Note that
\begin{equation*}
	\PP{X \leq \floor{np}} = \PP{X \leq \ceil{np}} - \PP{X = \ceil{np}} \geq \frac{1}{2} - \PP{X = \ceil{np}}.
\end{equation*}
We will show that  $\PP{X = \ceil{np}} \leq 1/6$ which then implies $\PP{[n]_{\floor{np}} \in \mP} \leq 3 \, \PP{[n]_p \in \mP}$. To do so, we will upper bound the probability that $X= \ceil{np}$ and use the inequalities $\sqrt{2\pi n} \left(\frac{n}{e}\right)^n \leq n! \leq \sqrt{2\pi n}  \left(\frac{n}{e}\right)^n e$ as follows:
\begin{align*}
	\PP{X = \ceil{np}} & = \binom{n}{\ceil{np}} \, p^{\ceil{np}} \, (1-p)^{n-\ceil{np}} = \frac{n! \, p^{\ceil{np}} \, (1-p)^{n-\ceil{np}}}{\ceil{np}! (n -\ceil{np})!}  \\
	& \leq  \frac{\sqrt{n} \; n^n\; e \; p^{\ceil{np}} \, (1-p)^{n-\ceil{np}}}{\sqrt{2 \pi \ceil{np}} \; (\ceil{np})^{\ceil{np}} \; \sqrt{n-\ceil{np}} \; (n-\ceil{np})^{n-\ceil{np}}} \\
	& = \frac{\sqrt{n}}{\sqrt{n - \ceil{np}}} \frac{(np)^{\ceil{np}}}{\ceil{np}^{\ceil{np}}} \frac{(n-np)^{n-\ceil{np}}}{(n-\ceil{np})^{n-\ceil{np}}} \frac{e}{\sqrt{2 \pi \ceil{np}}},
\end{align*}
Clearly we have $(np)^{\ceil{np}}/\ceil{np}^{\ceil{np}} \leq 1$ as well as $(n-np)^{n-\ceil{np}}/(n-\ceil{np})^{n-\ceil{np}} \leq e$. Hence we get
\begin{equation*}
	\PP{X = \ceil{np}} \leq \frac{e^2}{\sqrt{2\pi}} \, \sqrt{ \frac{n}{n - np - 1} \, \frac{1}{np} } \leq \frac{3}{\sqrt{(1-p)np - p}} < \frac{3}{\sqrt{C^2-1}}.
\end{equation*}
Choosing $C > 0$ large enough such that $\PP{X = \ceil{np}} \leq 1/6$ gives the desired property.
\end{proof}

\end{document}